\theoremstyle{thmstyleone}%
\newtheorem{theorem}{Theorem}[section]
\newtheorem{property}{Property}[section]%
\newtheorem{lemma}{Lemma}[section]
\theoremstyle{thmstyletwo}%
\newtheorem{example}{Example}[section]%
\newtheorem{remark}{Remark}[section]%
\theoremstyle{thmstylethree}%
\newtheorem{definition}{Definition}[section]%
\begin{document}

\title[On the adaptive deterministic block coordinate descent methods with momentum for solving large linear least-squares problems]{On the adaptive deterministic block coordinate descent methods with momentum for solving large linear least-squares problems}


\author[1]{\fnm{Long-Ze} \sur{Tan}}\email{ba23001003@mail.ustc.edu.cn}

\author[2]{\fnm{Ming-Yu} \sur{Deng}}\email{my\_deng@stu.ecnu.edu.cn}
\equalcont{These authors contributed equally to this work.}

\author[3]{\fnm{Jia-Li} \sur{Qiu}}\email{qsq2110748243@stu.xjtu.edu.cn}
\equalcont{These authors contributed equally to this work.}

\author*[4]{\fnm{Xue-Ping} \sur{Guo}}\email{xpguo@math.ecnu.edu.cn}

\affil[1]{\orgdiv{School of Mathematical Sciences, Suzhou Institute for Advanced Research}, \orgname{University of Science and Technology of China}, \orgaddress{\city{Suzhou}, \postcode{215132}, \country{China}}}
\affil[2]{\orgdiv{School of International Trade and Economics}, \orgname{University of International Business and Economics}, \orgaddress{\city{Beijing}, \postcode{100029}, \country{China}}}
\affil[3]{\orgdiv{School of Mathematics and Statistics}, \orgname{Xi'an Jiaotong University}, \orgaddress{\city{Shaanxi}, \postcode{710049}, \country{China}}}
\affil*[4]{\orgdiv{Key Laboratory of MEA (Ministry of Education) \& Shanghai Key Laboratory of PMMP, School of Mathematical Sciences}, \orgname{East China Normal University}, \orgaddress{\city{Shanghai}, \postcode{200241}, \country{China}}}


\abstract{In this work, we first present an adaptive deterministic block coordinate descent method with momentum (mADBCD) to solve the linear least-squares problem, which is based on Polyak's heavy ball method and a new column selection criterion for a set of block-controlled indices defined by the Euclidean norm of the residual vector of the normal equation. The mADBCD method eliminates the need for pre-partitioning the column indexes of the coefficient matrix, and it also obviates the need to compute the  Moore-Penrose pseudoinverse of a column sub-matrix at each iteration. Moreover, we demonstrate the adaptability and flexibility in the automatic selection and updating of the block control index set. When the coefficient matrix has full rank, the theoretical analysis of the mADBCD method indicates that it linearly converges towards the unique solution of the linear least-squares problem. Furthermore, by effectively integrating count sketch technology with the mADBCD method, we also propose a novel count sketch adaptive block coordinate descent method with momentum (CS-mADBCD) for solving highly overdetermined linear least-squares problems and analysis its convergence. Finally, numerical experiments illustrate the advantages of the proposed two methods in terms of both CPU times and iteration counts compared to recent block coordinate descent methods.}

\keywords{Linear least-squares problem, Coordinate descent method, Block coordinate descent method, Heavy ball method, Convergence}


\maketitle

\section{Introduction}
It is well known that the least-squares problem has a rich background and applications in scientific computing and optimization, see \cite{EAmDis,EEMZ,EHWMAB,EST,EWWY} and the references therein. In this study, we consider the following large linear least-squares problems
\begin{equation}\label{linear least}
  \underset{\mathbf{x}\in\mathbb{R}^n}{\min}\|\mathbf{b}-\mathbf{A}\mathbf{x}\|_2,
\end{equation}
where coefficient matrix $\mathbf{A}\in\mathbb{R}^{m\times n}(m\geq n)$  is of full column rank, $\mathbf{b}\in\mathbb{R}^m$ is a real $m$-dimensional vector, and $\|\cdot\|_2$ denotes the Euclidean norm of either a vector or a matrix. In view of the fact that the coefficient matrix $\mathbf{A}$ is of full column rank, it follows that the least-squares solution $\mathbf{x}_{\star}$ to problem \eqref{linear least} exists and is unique. Furthermore, this solution can be mathematically expressed as $\mathbf{x}_{\star}=\mathbf{A}^{\dagger} \mathbf{b}$, with $\mathbf{A}^{\dagger}=(\mathbf{A}^T \mathbf{A})^{-1} \mathbf{A}^T$ and $\mathbf{A}^T$ being the Moore-Penrose pseudoinverse and the transpose of matrix $\mathbf{A}$, respectively. Certainly, the solution $\mathbf{x}_{\star}=\mathbf{A}^{\dagger} \mathbf{b}$  of problem \eqref{linear least} can be obtained by solving the normal equations
\begin{equation}\label{normal equations}
\mathbf{A}^T \mathbf{A} \mathbf{x}=\mathbf{A}^T \mathbf{b},
\end{equation}
as they are mathematically equivalent.
\subsection{The coordinate descent methods}
The problem \eqref{linear least} can be addressed through various direct methods, such as QR factorization and singular value decomposition (SVD) methods \cite{EBjo,EHinj}. However, when the matrix dimensions of problem \eqref{linear least} are extremely large, these direct methods typically require substantial storage and involve significant computational expenses, resulting in prolonged execution times during the solving process \cite{EBjo}. Therefore, numerous iterative methods have been developed for solving problem \eqref{linear least}. The coordinate descent (CD) method is one of the most economical and efficient iterative methods for solving linear least-squares problems, which is obtained by direct application of the classical Gauss-Seidel iterative method to the normal equations \eqref{normal equations} \cite{ERa}. Starting from an initial vector $\mathbf{x}^{(0)}$, the CD method can be formulated as
$$
\mathbf{x}^{(k+1)}=\mathbf{x}^{(k)}+\frac{\mathbf{A}_{(j_k)}^T(\mathbf{b}-\mathbf{A}\mathbf{x}^{(k)})}{\|\mathbf{A}_{(j_k)}\|_2^2} \mathbf{e}_{j_k}, \quad k=0,1,2, \cdots,
$$
where $\mathbf{A}_{(j_k)}$ denotes the $j_k$th column of the matrix $\mathbf{A}$ and $\mathbf{e}_{j_k}$ is the $j_k$th column of the identity matrix $\mathbf{I}\in\mathbb{R}^{n\times n}$.
\par
Leventhal and Lewis \cite{ELeLa} proposed the randomized coordinate descent (RCD) method which still maintained an expected linear convergence rate. The RCD method was also known as the randomized Gauss-Seidel (RGS) algorithm in some literatures on randomized algorithms for solving various large-scale linear systems \cite{EDu,EHeNR,EMaNR}. Due to the simplifications in the execution process, it has led to further development in both practical applications and theoretical research, see \cite{EHaJo,ELiLuX,ENeNeG,ENes} and the references therein.
\par
 In 2019, Bai and Wu designed an effective probability criterion for selecting the working columns from the coefficient matrix and proposed a greedy randomized coordinate descent (GRCD) method \cite{EBaWu}. By introducing a relaxation parameter $\theta\in[0,1]$ in the GRCD method, Zhang and Guo \cite{EZhG} proposed a relaxed greedy randomized coordinate descent (RGRCD) method and established its convergence analysis. Zhang and Li \cite{EZhLi} formulated a novel greedy Gauss-Seidel (GGS) method by computing the maximum entries of the residual vector associated with the normal equations \eqref{normal equations} to improve the GRCD method in 2020. In 2023, Tan and Guo \cite{ETaGo} proposed a multi-step greedy randomized coordinate descent (MGRCD) method with a faster convergence rate than the GRCD method.
\subsection{The block coordinate descent methods}
Firstly, for an arbitrary positive integer $z$, $[z]$ represents the set $\left\{1,2,\ldots,z\right\}$. Let $\left\{\mathcal{\tau}_1, \mathcal{\tau}_2, \ldots, \mathcal{\tau}_p\right\}$ denote a partition of the column indices of the coefficient matrix $\mathbf{A}$ such that, for $i, j=1,2, \ldots, p$ and $i \neq j$, it holds that $ \mathcal{\tau}_i \neq \emptyset$, $\mathcal{\tau}_i \cap \mathcal{\tau}_j=\emptyset$ and $\bigcup_{i=1}^p \mathcal{\tau}_i=[n]$. By extending the idea of the randomized block Kaczmarz method \cite{ENeeT,ENeZhZ}, Wu and Needell \cite{EWuNe} proposed a randomized block Gauss-Seidel (RBGS) method, which uses a randomized control method to select a column submatrix of the matrix $\mathbf{A}$ at each iteration. The RBGS method can be formulated as
\begin{equation}\label{equ:1.3}
  \begin{aligned}
  \mathbf{x}^{(k+1)} & =\mathbf{x}^{(k)}+\mathbf{I}_{\tau_{j_k}}(\mathbf{A}_{\tau_{j_k}}^T \mathbf{A}_{\tau_{j_k}})^{-1} \mathbf{A}_{\tau_{j_k}}^T(\mathbf{b}-\mathbf{A} \mathbf{x}^{(k)}) \\
  & =\mathbf{x}^{(k)}+\mathbf{I}_{\tau_{j_k}} \mathbf{A}_{\tau_{j_k}}^{\dagger}(\mathbf{b}-\mathbf{A} \mathbf{x}^{(k)}), \quad k=0,1,2, \ldots,
  \end{aligned}
\end{equation}
where the subscript $(\cdot)_{\tau_{j_k}}$ represents the column submatrix of the corresponding matrix that indexed by $\tau_{j_k}$, which lies in a column partition $\left\{\mathcal{\tau}_1, \mathcal{\tau}_2, \ldots, \mathcal{\tau}_p\right\}$ of the coefficient matrix.
\par
Li and Zhang \cite{ELiZ} proposed a greedy block Gauss-Seidel (GBGS) method that inherited the approach for selecting working columns from the RGRCD method. Liu et al. \cite{ELiJG} established a maximal residual block Gauss-Seidel (MRBGS) method and its convergence analysis was established. In 2022, Chen and Huang \cite{EChHu} proposed a fast block coordinate descent (FBCD) method using an iterative format that is different from the RBGS and MRBGS methods. The numerical experiments in  \cite{EChHu} demonstrated that, in comparison to the GBGS and MRBGS methods, the FBCD method exhibited superior performance in solving problem \eqref{linear least}.
\par
In this paper, we propose an adaptive deterministic block coordinate descent method with momentum, referred to as mADBCD, for approximating the solution to problem \eqref{linear least}. Our method utilizes a novel block control sequence and incorporates Polyak's heavy-ball method \cite{EPoBT}. At each step, the mADBCD method adaptively selects and flexibly updates the index set, and does not need to compute the Moore-Penrose pseudoinverse  of a column submatrix of the matrix $\mathbf{A}$. Under certain reasonable assumptions, theoretical analysis indicates that the mADBCD method will linearly converge to the least-squares solution $\mathbf{x}_{\star}$ of problem \eqref{linear least}. Numerical experiments demonstrate that the proposed mADBCD method exhibits superior numerical performance compared to the GBGS, MRBGS and FBCD methods, based on both iteration counts and CPU times.
\par
In addition, we introduce a count sketch adaptive deterministic block coordinate descent method with momentum (CS-mADBCD) to solve the highly overdetermined linear least-squares problem \eqref{linear least}, which linearly converges to the least-squares solution $\mathbf{x}_{\star}$ with probability $1-\delta$ with $\delta\in(0,1)$. Numerical experiments demonstrate that while the mADBCD method requires fewer iterations than the CS-mADBCD method, the latter consumes less CPU time than the former to achieve the desired accuracy. This observation highlights the superiority of the CS-mADBCD method over the mADBCD method in solving the extremely overdetermined linear least-squares problem \eqref{linear least}.
\par
The remainder of this paper is organized as follows. In Section \ref{sec:2}, we introduce the necessary notations, the FBCD method, the heavy ball method and two important lemmas, rspectively. The mADBCD method and its convergence analysis and error estimation are presented in detail in Section \ref{sec:3}.  In Section \ref{sec:4}, we present the CS-mADBCD method by combining the count sketch with the mADBCD method and establish its convergence analysis. In Section \ref{sec:5}, some numerical experiments are presented to investigate the effectiveness of the mADBCD and CS-mADBCD methods, showing their superiority over the GBGS, MRBGS and FBCD methods. Finally, we conclude this paper with some conclusions in Section \ref{sec:6}.

\section{Necessary notation and preliminary results}\label{sec:2}
\textbf{Notation.} For any vector $\mathbf{a}\in\mathbb{R}^n$, $\mathbf{a}_i$ and $\|\mathbf{x}\|_2$ represent its $i$th entry and the Euclidean $2$-norm, respectively. We use $\mathbf{e}_j$ to represent the $n$-dimensional unit coordinate vector with the $j$th element being 1 and all other components being 0. $\langle\mathbf{x}, \mathbf{y}\rangle$ denotes the Euclidean inner product of two $n$-dimensional column vectors $\mathbf{x}$ and $\mathbf{y}$. For a given matrix $\mathbf{G}\in\mathbb{R}^{m\times n}$, we let $\mathbf{G}_{(j)}, \mathbf{G}^{T}, \mathbf{G}^{\dagger}$, $\mathcal{R}(\mathbf{G})$ and $\|\mathbf{G}\|_F$ to represent  its $j$-th column, the transpose, the Moore-Penrose pseudoinverse, the column space and the Frobenius norm, respectively. In addition, the SVD of the matrix $\mathbf{G}$ is represented as
$
\mathbf{G}=\mathbf{U}\left[\begin{array}{c}
\mathbf{\Sigma} \\
    \mathbf{0}
\end{array}\right] \mathbf{V}^{T},
$
where $\mathbf{U} \in\mathbb{R}^{m\times m}$ and $\mathbf{V}\in\mathbb{R}^{n\times n}$  are orthogonal matrices and $\mathbf{\Sigma}=\operatorname{diag}\left(\sigma_{1}(\mathbf{G}), \sigma_{2}(\mathbf{G}), \ldots, \sigma_{n}(\mathbf{G})\right) \in \mathbb{R}^{n \times n}$. The nonzero singular values of $\mathbf{G}$ are $\sigma_{\max}(\mathbf{G}):=\sigma_1(\mathbf{G}) \geq \sigma_2(\mathbf{G}) \geq$ $\ldots \geq \sigma_r(\mathbf{G}):=\sigma_{\min }(\mathbf{G})>0$, where $r$ is the rank of $\mathbf{G}$ and $\sigma_{\max}(\mathbf{G})$ and $\sigma_{\min }(\mathbf{G})$ are used to represent the largest and smallest singular values of the matrix $\mathbf{G}$, respectively. For any vector $\mathbf{x}\in\mathbf{R}^n$, we denote by $\|\mathbf{x}\|_{\mathbf{G}}=\sqrt{\mathbf{x}^T \mathbf{G} \mathbf{x}}$ the energy norm of a symmetric positive-definite matrix $\mathbf{G} \in \mathbb{R}^{n \times n}$. Let the residual vector $\mathbf{r}^{(k)}=\mathbf{b}-\mathbf{A} \mathbf{x}^{(k)}$ and $\mathbf{s}^{(k)}=\mathbf{A}^T \mathbf{r}^{(k)}$. For an index set $\mathcal{\tau} \subseteq[n]$, $\mathbf{A}_{\mathcal{\tau}}$ denotes the column submatrix indexed by $\mathcal{\mathcal{\tau}}$ and  $|\mathcal{\mathcal{\tau}}|$ represents the cardinality of the set $\mathcal{\mathcal{\tau}} \subseteq[n]$.
\par
\textbf{The FBCD method.} Based on the greedy criterion of column selection in \cite{EBaWu}, Chen and Huang \cite{EChHu} proposed the following FBCD method for solving problem \eqref{linear least}.
\par
\begin{algorithm}[!htbp]
  \caption{The FBCD method}\label{Alg:FBCD}
  \KwIn {$\mathbf{A},\mathbf{b},\ell, \mathbf{x}^{(0)}\in \mathbb{R}^n$ and $\mathbf{s}^{(0)}=\mathbf{A}^T(\mathbf{b}-\mathbf{A}\mathbf{x}^{(0)})$}
	\KwOut {Approximate solution {$\mathbf{x}^{(\ell)}$}}
  \For{$k=0, 1, \cdots,\ell-1$}
	{Compute
  $$
\delta_k=\frac{1}{2}\left(\frac{1}{\|\mathbf{s}^{(k)}\|_2^2} \max _{1 \leq j \leq n}\left\{\frac{|(\mathbf{s}^{(k)})_{j}|^2}{\|\mathbf{A}_{(j)}\|_2^2}\right\}+\frac{1}{\|\mathbf{A}\|_F^2}\right).
$$
  \\
Determine the control index set
$$
\tau_k=\left\{j \mid | (\mathbf{s}^{(k)})_{j}|^2 \geq \delta_k \|\mathbf{s}^{(k)}\|_2^2 \|\mathbf{A}_{(j)}\|_2^2\right\}.
$$
Compute
$$
\mathbf{\eta}_k=\sum_{j \in \tau_k} (\mathbf{s}^{(k)})_{j} \mathbf{e}_j.
$$
  Set
  \begin{equation}\label{FBCD iteration}
    \mathbf{x}^{(k+1)}=\mathbf{x}^{(k)}+\frac{\mathbf{\eta}_k^T\mathbf{s}^{(k)}}{\|\mathbf{A}\mathbf{\eta}_k\|^2_2}\mathbf{\mathbf{\eta}}_k.
  \end{equation}
	}
\end{algorithm}
\par

\textbf{The heavy ball method.} Consider the global minimum of the optimization problem
$$
\min _{\mathbf{x} \in \mathbb{R}^n} f(\mathbf{x}),
$$
where $f(\mathbf{x})$ is a differentiable convex function.
The following gradient descent (GD) algorithm is one of the most commonly used methods, starting with an arbitrary point $\mathbf{x}^{(0)}$,
$$
\mathbf{x}^{(k+1)}=\mathbf{x}^{(k)}-\alpha_k \nabla f(\mathbf{x}^{(k)}), k=0,1,\cdots,
$$
where step-size $\alpha_k>0$ and $\nabla f(\mathbf{x}^{(k)})$ denotes the gradient of $f(\mathbf{x})$ at $\mathbf{x}^{(k)}$.
\par
Polyak \cite{EPoBT} modified the GD method by introducing a momentum term, denoted as $\beta(\mathbf{x}^{(k)}-\mathbf{x}^{(k-1)})$, commonly referred to as the heavy ball term.
This gives rise to the gradient descent method with momentum (mGD), which is commonly known as the heavy ball method:
\begin{equation}\label{equ:heavy ball method}
  \mathbf{x}^{(k+1)}=\mathbf{x}^{(k)}-\alpha_k \nabla f(\mathbf{x}^{(k)})+\beta(\mathbf{x}^{(k)}-\mathbf{x}^{(k-1)}), k=0,1,\cdots.
\end{equation}.
\par
Finally, we conclude this section with the following two lemmas, which are crucial for the convergence analysis of the mADBCD method.
\begin{lemma}\label{lemma 3.1}
  If $\mathbf{G} \in \mathbb{R}^{m \times n}$ has full column rank, we have $ \sigma_{\min}^2(\mathbf{G})\|\mathbf{x}\|_2^2\leq\|\mathbf{G x}\|_2^2 \leq \sigma_{\max}^2(\mathbf{G})\|\mathbf{x}\|_2^2$, $\forall \mathbf{x} \in \mathbb{R}^n$.
\end{lemma}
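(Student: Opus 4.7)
The plan is to prove this via the singular value decomposition already introduced in the notation paragraph. Since $\mathbf{G}$ has full column rank, all $n$ singular values are strictly positive, so $\sigma_{\min}(\mathbf{G}) = \sigma_n(\mathbf{G}) > 0$ and $\mathbf{\Sigma}$ is invertible. First I would write $\mathbf{G} = \mathbf{U}\bigl[\begin{smallmatrix}\mathbf{\Sigma}\\\mathbf{0}\end{smallmatrix}\bigr]\mathbf{V}^T$ and compute $\mathbf{G}^T\mathbf{G} = \mathbf{V}\mathbf{\Sigma}^2\mathbf{V}^T$, noting that $\mathbf{U}$ drops out because the zero block annihilates the bottom portion.

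Next I would rewrite the quantity of interest as a quadratic form: $\|\mathbf{Gx}\|_2^2 = \mathbf{x}^T\mathbf{G}^T\mathbf{G}\mathbf{x} = \mathbf{x}^T\mathbf{V}\mathbf{\Sigma}^2\mathbf{V}^T\mathbf{x}$. Then I would make the orthogonal change of variables $\mathbf{y} = \mathbf{V}^T\mathbf{x}$, so that $\|\mathbf{y}\|_2^2 = \|\mathbf{x}\|_2^2$ (since $\mathbf{V}$ is orthogonal) and
\begin{equation*}
\|\mathbf{Gx}\|_2^2 \;=\; \mathbf{y}^T\mathbf{\Sigma}^2\mathbf{y} \;=\; \sum_{i=1}^n \sigma_i^2(\mathbf{G})\,y_i^2.
\end{equation*}

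From here the conclusion is immediate by sandwiching each $\sigma_i^2(\mathbf{G})$ between $\sigma_{\min}^2(\mathbf{G})$ and $\sigma_{\max}^2(\mathbf{G})$, yielding
\begin{equation*}
\sigma_{\min}^2(\mathbf{G})\sum_{i=1}^n y_i^2 \;\leq\; \sum_{i=1}^n \sigma_i^2(\mathbf{G})\,y_i^2 \;\leq\; \sigma_{\max}^2(\mathbf{G})\sum_{i=1}^n y_i^2,
\end{equation*}
and substituting back $\sum y_i^2 = \|\mathbf{x}\|_2^2$ gives the stated inequality. There is no substantive obstacle here; the only small point requiring care is to justify that $\sigma_{\min}(\mathbf{G}) = \sigma_n(\mathbf{G})$ in the full-column-rank case, which is exactly the hypothesis that guarantees all $n$ singular values are nonzero so that the lower bound is not vacuous.
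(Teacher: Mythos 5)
Your proof is correct and follows essentially the same route as the paper's: both diagonalize $\mathbf{G}^T\mathbf{G}$ via the SVD, change variables to $\mathbf{y}=\mathbf{V}^T\mathbf{x}$, and bound the resulting weighted sum $\sum_i \sigma_i^2(\mathbf{G})y_i^2$ between $\sigma_{\min}^2(\mathbf{G})\|\mathbf{y}\|_2^2$ and $\sigma_{\max}^2(\mathbf{G})\|\mathbf{y}\|_2^2$. Your added remark that full column rank guarantees $\sigma_n(\mathbf{G})>0$ so the lower bound is non-vacuous is a nice touch the paper leaves implicit.
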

\begin{proof}
  From the SVD of the matrix, we have
  $$
  \mathbf{G}^{T} \mathbf{G}=\mathbf{V} \operatorname{diag}\left(\sigma_{1}^2(\mathbf{G}), \sigma_{2}^2(\mathbf{G}), \ldots, \sigma_{n}^2(\mathbf{G})\right) \mathbf{V}^{T}.
  $$
 Let $\mathbf{y}=\mathbf{V}^T\mathbf{x}=(y_1,y_2,\ldots,y_n)^T\in\mathbb{R}^n$, we have $\|\mathbf{y}\|_2=\|\mathbf{x}\|_2$. Since $\mathbf{V}$ is a orthogonal matrix, thus
 $$
 \begin{aligned}
  \|\mathbf{G}\mathbf{x}\|^2_2= \mathbf{x}^T(\mathbf{G}^T\mathbf{G})\mathbf{x}& =\mathbf{x}^T\mathbf{V} \operatorname{diag}\left(\sigma_{1}^2(\mathbf{G}), \sigma_{2}^2(\mathbf{G}), \ldots, \sigma_{n}^2(\mathbf{G})\right) \mathbf{V}^{T}\mathbf{x}\\
  & =\sigma_1^2(\mathbf{G})y_1^2+\sigma_2^2(\mathbf{G})y_2^2+\cdots+\sigma_n^2(\mathbf{G})y_n^2\\
  & \geq \sigma_n^2(\mathbf{G})\|\mathbf{y}\|_2^2=\sigma_{\min}^2(\mathbf{G})\|\mathbf{x}\|_2^2.
 \end{aligned}
 $$
Similarly, $\|\mathbf{G x}\|_2^2 \leq \sigma_{\max}^2(\mathbf{G})\|\mathbf{x}\|_2^2$ can be proved. Therefore, the conclusion of the theorem holds.
\end{proof}
\begin{lemma}(\cite{EHX})\label{lemma 3.2}
  Fix $F^{(1)}=F^{(0)} \geq 0$, and let $\left\{F^{(k)}\right\}_{k \geq 0}$ be a sequence of nonnegative real numbers satisfying the relation
  $$
    F^{(k+1)} \leq \gamma_1 F^{(k)}+\gamma_2 F^{(k-1)}, \quad \forall k \geq 1,
  $$
  where $\gamma_2 \geq 0, \gamma_1+\gamma_2<1$. Then the sequence satisfies the relation
$$
  F^{(k+1)} \leq q^k(1+\tau) {F}^0, \quad \forall k \geq 0,
$$
  where
$$
  q=\left\{\begin{array}{ll}
    \frac{\gamma_1+\sqrt{\gamma_1^2+4 \gamma_2}}{2}, & \text { if } \gamma_2>0 ; \\
    \gamma_1, & \text { if } \gamma_2=0,
    \end{array} \quad \right.
$$
and $\tau=q-\gamma_1 \geq 0$.
  Moreover, $\gamma_1+\gamma_2 \leq q<1$, with equality if and only if $\gamma_2=0$.
\end{lemma}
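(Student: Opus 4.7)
The plan is to recognize the hypothesis as a second-order linear inequality whose extremal behavior is governed by the characteristic polynomial $\lambda^{2}-\gamma_{1}\lambda-\gamma_{2}=0$. The positive root of this polynomial is exactly $q=(\gamma_{1}+\sqrt{\gamma_{1}^{2}+4\gamma_{2}})/2$ in the case $\gamma_{2}>0$, and $q=\gamma_{1}$ when $\gamma_{2}=0$. From the defining equation, I would first record the two algebraic identities $q^{2}=\gamma_{1}q+\gamma_{2}$ and, setting $\tau=q-\gamma_{1}$, the relation $q\tau=\gamma_{2}$, so that $\tau\ge 0$ and $q(1+\tau)=q+\gamma_{2}$. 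These identities are the engine of the whole argument.

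Next I would prove the bound $F^{(k+1)}\le q^{k}(1+\tau)F^{(0)}$ by strong induction on $k$. For $k=0$ the claim reduces to $F^{(1)}\le(1+\tau)F^{(0)}$, which holds thanks to the assumption $F^{(1)}=F^{(0)}$ together with $\tau\ge 0$; this is precisely where the initialization $F^{(1)}=F^{(0)}$ is used. For $k=1$, applying the recursive inequality gives $F^{(2)}\le(\gamma_{1}+\gamma_{2})F^{(0)}$, and the bound follows because $q(1+\tau)=q+\gamma_{2}\ge\gamma_{1}+\gamma_{2}$. For $k\ge 2$, assuming the estimate holds up to index $k$, I would write
\begin{equation*}
F^{(k+1)}\le\gamma_{1}F^{(k)}+\gamma_{2}F^{(k-1)}\le\bigl[\gamma_{1}q^{k-1}+\gamma_{2}q^{k-2}\bigr](1+\tau)F^{(0)}=q^{k-2}(\gamma_{1}q+\gamma_{2})(1+\tau)F^{(0)},
\end{equation*}
and then invoke $\gamma_{1}q+\gamma_{2}=q^{2}$ to collapse the bracket to $q^{k}(1+\tau)F^{(0)}$, closing the induction.

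Finally, for the ``moreover'' part, I would verify $q<1$ and $\gamma_{1}+\gamma_{2}\le q$ separately. The strict inequality $q<1$ when $\gamma_{2}>0$ comes from rearranging $\sqrt{\gamma_{1}^{2}+4\gamma_{2}}<2-\gamma_{1}$ and squaring, which is equivalent to the hypothesis $\gamma_{1}+\gamma_{2}<1$; when $\gamma_{2}=0$ it is immediate from $q=\gamma_{1}<1$. To compare $q$ with $\gamma_{1}+\gamma_{2}$, I would use $q-\gamma_{1}=\gamma_{2}/q$: since $q<1$ this gives $q-\gamma_{1}\ge\gamma_{2}$ with equality precisely when $\gamma_{2}=0$.

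The argument is short and uses only elementary manipulations, so the ``hard part'' is really only bookkeeping: making sure the base case at $k=0$ exploits the equality $F^{(1)}=F^{(0)}$ (the bound would fail without it, because $q^{0}(1+\tau)=1+\tau$ could be less than $1$ only if $\tau<0$), and organizing the induction so that the algebraic identity $q^{2}=\gamma_{1}q+\gamma_{2}$ is applied at exactly the right moment to absorb the two terms of the recurrence into a single factor of $q^{k}$.
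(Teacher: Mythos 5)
The paper does not prove this lemma at all: it is imported verbatim from \cite{EHX}, so there is no in-paper argument to compare against. Your proof is correct and self-contained, and it is the standard argument: the characteristic-root identities $q^{2}=\gamma_{1}q+\gamma_{2}$ and $q\tau=\gamma_{2}$, the base cases $k=0$ (where the normalization $F^{(1)}=F^{(0)}$ together with $\tau\ge 0$ is exactly what is needed) and $k=1$ (using $q+\gamma_{2}\ge\gamma_{1}+\gamma_{2}$), the inductive collapse of $\gamma_{1}q^{k-1}+\gamma_{2}q^{k-2}$ into $q^{k}$, and the equivalence of $q<1$ with $\gamma_{1}+\gamma_{2}<1$ are all handled properly. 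Two small points of bookkeeping. First, the induction step multiplies the inductive bound $F^{(k)}\le q^{k-1}(1+\tau)F^{(0)}$ by $\gamma_{1}$, which preserves the inequality only if $\gamma_{1}\ge 0$; the hypotheses as transcribed in the paper list only $\gamma_{2}\ge 0$ and $\gamma_{1}+\gamma_{2}<1$, so you are tacitly using an extra assumption. It is harmless here because the paper verifies $\gamma_{1}\ge 0$ separately before invoking the lemma (and the cited source assumes it), but you should state it explicitly. Second, the identity $q-\gamma_{1}=\gamma_{2}/q$ in the ``moreover'' step presumes $q>0$; the degenerate case $q=0$ forces $\gamma_{1}=\gamma_{2}=0$ and is already covered by your $\gamma_{2}=0$ branch, so nothing breaks.
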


\section{The mADBCD method and its convergence}\label{sec:3}
In this section, we first present the mADBCD method. Subsequently, our focus will be on establishing its convergence analysis.
\par
To construct the mADBCD method, we need to introduce the following novel block control index set
$$
  \tau_k=\left\{\ j_{k}\mid |(\mathbf{s}^{(k)})_{j_k}|^{2} \geq \frac{\|\mathbf{s}^{(k)}\|_2^2}{n}\right\},
$$
where $\mathbf{s}^{(k)}=\mathbf{A}^T(\mathbf{b}-\mathbf{A}\mathbf{x}^{(k)})$. It is easy to know that the set $\tau_k$ is a non-empty set.
To expedite the convergence speed of the iterative scheme \eqref{FBCD iteration}, we apply the idea of the heavy ball method \eqref{equ:heavy ball method} to obtain the following mADBCD method.
\par
\begin{algorithm}[!htbp]
  \caption{The mADBCD method}\label{Alg:mADBCD}
  \KwIn {$\mathbf{A},\mathbf{b},\ell,\beta \geq 0, \mathbf{x}^{(0)}=\mathbf{x}^{(1)} \in \mathbb{R}^n$ and $\mathbf{s}^{(1)}=\mathbf{A}^T(\mathbf{b}-\mathbf{A}\mathbf{x}^{(1)})$.}
	\KwOut {Approximate solution {$\mathbf{x}^{(\ell)}$}.}
  \For{$k=1, 2, \cdots,\ell-1$}
	{Determine the control index set
  \begin{equation}\label{mADBCD set}
    \tau_k=\left\{\ j_{k}\mid |(\mathbf{s}^{(k)})_{j_k}|^{2} \geq \frac{\|\mathbf{s}^{(k)}\|_2^2}{n}\right\}.
  \end{equation}
  \\
  Compute
  \begin{equation}\label{mADBCD eta}
    \mathbf{\eta}_k = \sum\limits_{j_k\in \tau_k}(\mathbf{s}^{(k)})_{j_k}\mathbf{e}_{j_k}.
  \end{equation}
  Set
  \begin{equation}\label{mADBCD iteration}
    \mathbf{x}^{(k+1)}=\mathbf{x}^{(k)}+\frac{\mathbf{\eta}_k^T\mathbf{s}^{(k)}}{\|\mathbf{A}\mathbf{\eta}_k\|^2_2}\mathbf{\eta}_k + \beta(\mathbf{x}^{(k)}-\mathbf{x}^{(k-1)}).
  \end{equation}
	}
\end{algorithm}
As follows is the important property for the mADBCD method.
\begin{property}\label{property 2.1}
  From any initial vector $\mathbf{x}^{(0)}$, the iterative sequence $\left\{\mathbf{x}^{(k)}\right\}_{k=0}^{\infty}$ generated by the mADBCD method is well-defined.
\end{property}
\begin{proof}
  For any $\mathbf{x}^{(0)}=\mathbf{x}^{(1)}\in\mathbb{R}^{n}$, $\|\mathbf{A}^T(\mathbf{b}-\mathbf{A}\mathbf{x}^{(1)})\|_2$ either equals zero or not. If $\|\mathbf{A}^T(\mathbf{b}-\mathbf{A}\mathbf{x}^{(1)})\|_2=0$, the sequence contains only $\mathbf{x}^{(0)}$ and $\mathbf{x}^{(1)}$. When $\|\mathbf{A}^T(\mathbf{b}-\mathbf{A}\mathbf{x}^{(1)})\|_2\neq 0$, it is easy to show that the set $\tau_1$ defined by \eqref{mADBCD set} is non-empty.  In order to demostrate the existence of $\mathbf{x}^{(2)}$ defined by \eqref{mADBCD iteration} exists,  we  just need to show that $\|\mathbf{A}\mathbf{\eta}_1\|_2\neq 0$. Assuming $\|\mathbf{A}\mathbf{\eta}_1\|_2=0$, then $\mathbf{A}\mathbf{\eta}_1 =\mathbf{0}$ and
  \begin{equation}\label{equation:2.4}
    \mathbf{\eta}_1\mathbf{s}^{(1)}  = \mathbf{\eta}_1^T\mathbf{A}^T(\mathbf{b}-\mathbf{A}\mathbf{x}^{(1)})= (\mathbf{A}\mathbf{\eta}_1)^T(\mathbf{b}-\mathbf{A}\mathbf{x}^{(1)}) = 0.
  \end{equation}
  However, according to \eqref{mADBCD set} and \eqref{mADBCD eta}, it follows that
  \begin{equation}\label{equation:2.5}
    \mathbf{\eta}_1\mathbf{s}^{(1)}=\sum\limits_{j_1\in \tau_1}|(\mathbf{s}^{(1)})_{j_1}|^2 \geq \frac{1}{n}|\tau_1|\|\mathbf{s}^{(1)}\|^2_2 > 0,
  \end{equation}
  which generates a contradiction to \eqref{equation:2.4}, thus this shows the existence of $\mathbf{x}^{(2)}$.
  \par
  Suppose $\mathbf{x}^{(k)}$ ($k\geq 2$) has been computed by the mADBCD method, then we can repeat the above derivation process by using $\mathbf{\eta}_k$ and $\mathbf{x}^{(k)}$ to replace $\mathbf{\eta}_1$ and $\mathbf{x}^{(1)}$, respectively. It follows by induction that the iterative sequence $\left\{\mathbf{x}\right\}_{k\geq 0}$ generated by the mADBCD method is well-defined.
\end{proof}
For the convergence analysis and error estimation of the mADBCD method, we can establish the following theorem.
\begin{theorem}\label{theo:3.1}
  Consider the large linear least-squares problem $\underset{\mathbf{x}\in\mathbb{R}^n}{\min}\|\mathbf{b}-\mathbf{A}\mathbf{x}\|_2$, where $\mathbf{A}\in \mathbb{R}^{m \times n}(m \geq n)$ is of full column rank and $\mathbf{b}\in\mathbb{R}^m$ is a given vector. Suppose that the following expressions
  $$
  \gamma_1=1 + 3\beta + 2\beta^2  - (3\beta+1)\frac{|\tau_k|\sigma^2_{\min}(\mathbf{A})}{n \sigma^2_{\max}( \mathbf{A}_{\tau_k})}  \quad \text{and} \quad \gamma_2= 2\beta^2+\beta
  $$
  satisfy $\gamma_1+\gamma_2<1$.
  Then the iteration sequence $\left\{\mathbf{x}^{(k)}\right\}_{k=0}^{\infty}$, generated by the mADBCD method starting from any initial guess $\mathbf{x}^{(0)}$, exists and converges to the unique least-squares solution $\mathbf{x}_{\star}=\mathbf{A}^{\dagger}\mathbf{b}$, with the error estimate
  \begin{equation}\label{inequality:2.6}
    \|\mathbf{x}^{(k+1)}-\mathbf{x}_{\star}\|^2_{\mathbf{A^T\mathbf{A}}} \leq q^k(1+\tau)\|\mathbf{x}^{(0)}-\mathbf{x}_{\star}\|^2_{\mathbf{A^T\mathbf{A}}},
   \end{equation}
   where $q=\frac{\gamma_1+\sqrt{\gamma_1^2+4 \gamma_2}}{2}$ and $\tau=q-\gamma_1$. Moreover, $\gamma_1+\gamma_2 \leq q<1$.
\end{theorem}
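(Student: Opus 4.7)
The strategy is to first establish a one-step contraction for the pure line-search update $\mathbf{y}^{(k)} := \mathbf{x}^{(k)} + (\eta_k^T\mathbf{s}^{(k)}/\|\mathbf{A}\eta_k\|_2^2)\eta_k$ in the $\mathbf{A}^T\mathbf{A}$ energy norm, and then translate the momentum correction into a two-step scalar recurrence of precisely the form assumed in Lemma~\ref{lemma 3.2}. Set $F^{(k)} := \|\mathbf{x}^{(k)} - \mathbf{x}_\star\|_{\mathbf{A}^T\mathbf{A}}^2 = \|\mathbf{A}(\mathbf{x}^{(k)} - \mathbf{x}_\star)\|_2^2$. Since $\mathbf{x}_\star$ solves the normal equations, $\mathbf{s}^{(k)} = -\mathbf{A}^T\mathbf{A}(\mathbf{x}^{(k)} - \mathbf{x}_\star)$; combining this with the identity $\eta_k^T\mathbf{s}^{(k)} = \|\eta_k\|_2^2$ already established inside the proof of Property~\ref{property 2.1}, the $\mathbf{A}$-image $\mathbf{d}_k := \mathbf{A}(\mathbf{y}^{(k)} - \mathbf{x}^{(k)})$ satisfies the exact orthogonality relation $\langle \mathbf{d}_k,\,\mathbf{A}(\mathbf{x}^{(k)}-\mathbf{x}_\star)\rangle = -\|\mathbf{d}_k\|_2^2$, whence $\|\mathbf{A}(\mathbf{y}^{(k)} - \mathbf{x}_\star)\|_2^2 = F^{(k)} - \|\mathbf{d}_k\|_2^2$.

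Next I would chain three bounds to lower-bound $\|\mathbf{d}_k\|_2^2 = \|\eta_k\|_2^4/\|\mathbf{A}\eta_k\|_2^2$: Lemma~\ref{lemma 3.1} applied to $\mathbf{A}_{\tau_k}$ gives $\|\mathbf{A}\eta_k\|_2^2 \leq \sigma_{\max}^2(\mathbf{A}_{\tau_k})\|\eta_k\|_2^2$; the defining inequality \eqref{mADBCD set} forces $\|\eta_k\|_2^2 = \sum_{j\in\tau_k}|(\mathbf{s}^{(k)})_j|^2 \geq (|\tau_k|/n)\|\mathbf{s}^{(k)}\|_2^2$; and Lemma~\ref{lemma 3.1} applied to $\mathbf{A}$ (after diagonalizing $\mathbf{A}^T\mathbf{A}$) gives $\|\mathbf{s}^{(k)}\|_2^2 \geq \sigma_{\min}^2(\mathbf{A}) F^{(k)}$. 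Telescoping these produces $\|\mathbf{d}_k\|_2^2 \geq C_k F^{(k)}$ with $C_k := |\tau_k|\sigma_{\min}^2(\mathbf{A})/(n\sigma_{\max}^2(\mathbf{A}_{\tau_k}))$ --- the quantity appearing in $\gamma_1$.

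To fold in the momentum, I would write
\[
\mathbf{A}(\mathbf{x}^{(k+1)} - \mathbf{x}_\star) = \bigl[\mathbf{A}(\mathbf{x}^{(k)} - \mathbf{x}_\star) + \mathbf{d}_k\bigr] + \beta\mathbf{A}(\mathbf{x}^{(k)} - \mathbf{x}_\star) - \beta\mathbf{A}(\mathbf{x}^{(k-1)} - \mathbf{x}_\star)
\]
and expand the square. The diagonal terms and the cross term $2\beta\langle\mathbf{A}(\mathbf{x}^{(k)}-\mathbf{x}_\star)+\mathbf{d}_k,\,\mathbf{A}(\mathbf{x}^{(k)}-\mathbf{x}_\star)\rangle$ I would evaluate \emph{exactly} using the orthogonality identity above, producing a clean factor $(1+3\beta)(1-C_k)$ in front of $F^{(k)}$; the remaining cross terms, namely $-2\beta\langle\mathbf{A}(\mathbf{x}^{(k)}-\mathbf{x}_\star)+\mathbf{d}_k,\,\mathbf{A}(\mathbf{x}^{(k-1)}-\mathbf{x}_\star)\rangle$ and $-2\beta^2\langle\mathbf{A}(\mathbf{x}^{(k)}-\mathbf{x}_\star),\mathbf{A}(\mathbf{x}^{(k-1)}-\mathbf{x}_\star)\rangle$, I would bound via the elementary Young inequality $2|\langle\mathbf{u},\mathbf{v}\rangle| \leq \|\mathbf{u}\|_2^2+\|\mathbf{v}\|_2^2$, invoking $\|\mathbf{A}(\mathbf{x}^{(k)}-\mathbf{x}_\star)+\mathbf{d}_k\|_2^2 \leq (1-C_k)F^{(k)}$ once more. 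Collecting coefficients then produces exactly $F^{(k+1)} \leq \gamma_1 F^{(k)} + \gamma_2 F^{(k-1)}$ with the stated $\gamma_1 = (1+3\beta)(1-C_k)+2\beta^2$ and $\gamma_2 = \beta+2\beta^2$. The error estimate \eqref{inequality:2.6} and the assertion $q<1$ then follow directly from Lemma~\ref{lemma 3.2}, noting $F^{(0)} = F^{(1)}$ since $\mathbf{x}^{(0)} = \mathbf{x}^{(1)}$; convergence $\mathbf{x}^{(k)} \to \mathbf{x}_\star$ is immediate from $q<1$ and the full column rank of $\mathbf{A}$.

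The main delicate point I anticipate is the bookkeeping in the third step: the sharp coefficient $1+3\beta$ multiplying $C_k$ (rather than the weaker $1+2\beta$ that a uniform application of Young's inequality to every cross term would produce) hinges on using the exact identity $\langle\mathbf{d}_k,\mathbf{A}(\mathbf{x}^{(k)}-\mathbf{x}_\star)\rangle = -\|\mathbf{d}_k\|_2^2$ on the one cross term that permits it, and reserving Young's inequality only for the two cross terms that genuinely involve $\mathbf{A}(\mathbf{x}^{(k-1)}-\mathbf{x}_\star)$.
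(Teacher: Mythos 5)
Your proposal is correct and follows essentially the same route as the paper's proof: the same key contraction bound $\|\mathbf{d}_k\|_2^2=\|\mathbf{P}_k(\mathbf{A}(\mathbf{x}^{(k)}-\mathbf{x}_\star))\|_2^2\geq \frac{|\tau_k|\sigma^2_{\min}(\mathbf{A})}{n\sigma^2_{\max}(\mathbf{A}_{\tau_k})}F^{(k)}$ obtained from the same three-step chain, the same two-term recursion $F^{(k+1)}\leq\gamma_1F^{(k)}+\gamma_2F^{(k-1)}$, and the same appeal to Lemma~\ref{lemma 3.2} with $F^{(0)}=F^{(1)}$. The only difference is cosmetic bookkeeping --- you expand directly in $\mathbf{A}(\mathbf{x}^{(k)}-\mathbf{x}_\star)$ and $\mathbf{A}(\mathbf{x}^{(k-1)}-\mathbf{x}_\star)$ and reserve Young's inequality for the two cross terms involving $\mathbf{x}^{(k-1)}$, while the paper routes the algebra through $\mathbf{A}(\mathbf{x}^{(k)}-\mathbf{x}^{(k-1)})$; both yield identical constants.
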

\begin{proof}
  Based on the Property \ref{property 2.1}, it can be deduced that the iterative sequence $\left\{\mathbf{x}^{(k)}\right\}_{k\geq 0}$ indeed exists. For certain values of $ 0 \leq k \leq \infty$, when $\|\mathbf{A}^T(\mathbf{b}-\mathbf{A}\mathbf{x}^{(k)})\|_2=0$, $\left\{\mathbf{x}^{(k)}\right\}_{k\geq 0}$ is simply a sequence containing only a finite number of elements, in which case the iteration terminates with $\mathbf{x}^{(k)}=\mathbf{x}_{\star}$ and the mADBCD method succeeds in finding a solution to the least-squares problem \eqref{linear least}. If, for any $k\geq 0$, it holds that $\|\mathbf{A}^T(\mathbf{b}-\mathbf{A}\mathbf{x}^{(k)})\|_2\neq 0$, then we will prove that the sequence $\left\{\mathbf{x}^{(k)}\right\}_{k=0}^{\infty}$ converges to $\mathbf{x}_{\star}$.
  \par
  For $k \geq 0$, set
  $$\mathbf{P}_k = \frac{\mathbf{A}\mathbf{\eta}_k \mathbf{\eta}_k^T \mathbf{A}^T}{\|\mathbf{A}\mathbf{\eta}_k\|_2^2},$$
   it is easy to prove that $\mathbf{P}_k$ satisfies $\mathbf{P}_k=\mathbf{P}_k^T$ and $\mathbf{P}_k^2=\mathbf{P}_k$.
  According to the definition of $\mathbf{P}_k$ and $\mathbf{A}^T\mathbf{A}\mathbf{x}_{\star}=\mathbf{A}^T\mathbf{b}$, it holds that
  \begin{equation}\label{equation:2.6}
    \begin{aligned}
      \frac{\mathbf{\eta}_k^T \mathbf{s}^{(k)}}{\|\mathbf{A}\mathbf{\eta}_k\|^2_2}\mathbf{A}\mathbf{\eta}_k&=\frac{\mathbf{\eta}_k^T(\mathbf{A}^T\mathbf{b}-\mathbf{A}^T\mathbf{A}\mathbf{x}^{(k)})}{\|\mathbf{A}\mathbf{\eta}_k\|^2_2}\mathbf{A}\mathbf{\eta}_k=\frac{\mathbf{\eta}_k^T(\mathbf{A}^T\mathbf{A}\mathbf{x}_{\star}-\mathbf{A}^T\mathbf{A}\mathbf{x}^{(k)})}{\|\mathbf{A}\mathbf{\eta}_k\|^2_2}\mathbf{A}\mathbf{\eta}_k\\
      &=\frac{\mathbf{\eta}_k^T\mathbf{A}^T(\mathbf{A}\mathbf{x}_{\star}-\mathbf{A}\mathbf{x}^{(k)})}{\|\mathbf{A}\mathbf{\eta}_k\|^2_2}\mathbf{A}\mathbf{\eta}_k= \frac{\mathbf{A}\mathbf{\eta}_k \mathbf{\eta}_k^T\mathbf{A}^T(\mathbf{A}\mathbf{x}_{\star}-\mathbf{A}\mathbf{x}^{(k)})}{\|\mathbf{A}\mathbf{\eta}_k\|^2_2}\\
      &=-\mathbf{P}_k(\mathbf{A}(\mathbf{x}^{(k)}-\mathbf{x}_{\star})).
    \end{aligned}
  \end{equation}
 Then we can deduce that
  \begin{equation}\label{equation:2.7}
    \begin{aligned}
      &\|\mathbf{x}^{(k+1)}-\mathbf{x}_{\star}\|^2_{\mathbf{A}^T\mathbf{A}}=\|\mathbf{A}(\mathbf{x}^{(k+1)}-\mathbf{x}_{\star})\|^2_2\\
      =&\|\mathbf{A}(\mathbf{x}^{(k)}-\mathbf{x}_{\star})-\mathbf{P}_k(\mathbf{A}(\mathbf{x}^{(k)}-\mathbf{x}_{\star}))+\beta\mathbf{A}(\mathbf{x}^{(k)}-\mathbf{x}^{(k-1)})\|^2_2\\
      =&\|(\mathbf{I}-\mathbf{P}_k)(\mathbf{A}(\mathbf{x}^{(k)}-\mathbf{x}_{\star})) + \beta\mathbf{A}(\mathbf{x}^{(k)}-\mathbf{x}^{(k-1)})\|_2^2\\
      =&\|(\mathbf{I}-\mathbf{P}_k)(\mathbf{A}(\mathbf{x}^{(k)}-\mathbf{x}_{\star}))\|^2_2 + \beta^2 \|\mathbf{A}(\mathbf{x}^{(k)}-\mathbf{x}^{(k-1)})\|_2^2+2\beta\left\langle(\mathbf{I}-\mathbf{P}_k)(\mathbf{A}(\mathbf{x}^{(k)}-\mathbf{x}_{\star})),\mathbf{A}(\mathbf{x}^{(k)}-\mathbf{x}^{(k-1)})\right\rangle \\
      =&\|(\mathbf{I}-\mathbf{P}_k)(\mathbf{A}(\mathbf{x}^{(k)}-\mathbf{x}_{\star}))\|^2_2 + \beta^2 \|\mathbf{A}(\mathbf{x}^{(k)}-\mathbf{x}^{(k-1)})\|_2^2 \\&+2\beta\left\langle \mathbf{A}(\mathbf{x}^{(k)}-\mathbf{x}_{\star}), \mathbf{A}(\mathbf{x}^{(k)}-\mathbf{x}^{(k-1)})\right\rangle-2\beta\left\langle \mathbf{P}_k(\mathbf{A}(\mathbf{x}^{(k)}-\mathbf{x}_{\star})), \mathbf{A}(\mathbf{x}^{(k)}-\mathbf{x}^{(k-1)})\right\rangle.
    \end{aligned}
  \end{equation}
  \par
  We now carefully analyze each term on the right-hand side of the last equation in \eqref{equation:2.7}. By using $\mathbf{P}_k = \mathbf{P}_k^T$ and $\mathbf{P}_k^2 = \mathbf{P}_k$, the first term satisfies
  \begin{equation}\label{equation:2.8}
    \begin{aligned}
      &\|(\mathbf{I}-\mathbf{P}_k)(\mathbf{A}(\mathbf{x}^{(k)}-\mathbf{x}_{\star}))\|^2_2\\
      =&\left\langle (\mathbf{I}-\mathbf{P}_k)(\mathbf{A}(\mathbf{x}^{(k)}-\mathbf{x}_{\star})),(\mathbf{I}-\mathbf{P}_k)(\mathbf{A}(\mathbf{x}^{(k)}-\mathbf{x}_{\star}))\right\rangle \\
      =&(\mathbf{A}(\mathbf{x}^{(k)}-\mathbf{x}_{\star}))^T(\mathbf{I}-\mathbf{P}_k)^T(\mathbf{I}-\mathbf{P}_k)(\mathbf{A}(\mathbf{x}^{(k)}-\mathbf{x}_{\star}))\\
      =&(\mathbf{A}(\mathbf{x}^{(k)}-\mathbf{x}_{\star}))^T(\mathbf{I}-\mathbf{P}_k)(\mathbf{A}(\mathbf{x}^{(k)}-\mathbf{x}_{\star}))\\
     =&(\mathbf{A}(\mathbf{x}^{(k)}-\mathbf{x}_{\star}))^T(\mathbf{A}(\mathbf{x}^{(k)}-\mathbf{x}_{\star}))-\left\langle\mathbf{A}(\mathbf{x}^{(k)}-\mathbf{x}_{\star}), \mathbf{P}_k(\mathbf{A}(\mathbf{x}^{(k)}-\mathbf{x}_{\star})) \right\rangle \\
      =&\|\mathbf{A}(\mathbf{x}^{(k)}-\mathbf{x}_{\star})\|^2_2-\left\langle\mathbf{A}(\mathbf{x}^{(k)}-\mathbf{x}_{\star}), \mathbf{P}_k(\mathbf{A}(\mathbf{x}^{(k)}-\mathbf{x}_{\star})) \right\rangle.
    \end{aligned}
  \end{equation}
  Then, by
  \begin{equation}\label{equation:2.9}
    \begin{aligned}
      &\left\langle\mathbf{A}(\mathbf{x}^{(k)}-\mathbf{x}_{\star}), \mathbf{P}_k(\mathbf{A}(\mathbf{x}^{(k)}-\mathbf{x}_{\star})) \right\rangle = (\mathbf{A}(\mathbf{x}^{(k)}-\mathbf{x}_{\star}))^T \mathbf{P}_k(\mathbf{A}(\mathbf{x}^{(k)}-\mathbf{x}_{\star}))\\
       = &(\mathbf{A}(\mathbf{x}^{(k)}-\mathbf{x}_{\star}))^T \mathbf{P}_k^2(\mathbf{A}(\mathbf{x}^{(k)}-\mathbf{x}_{\star})) = (\mathbf{A}(\mathbf{x}^{(k)}-\mathbf{x}_{\star}))^T \mathbf{P}_k^T \mathbf{P}_k(\mathbf{A}(\mathbf{x}^{(k)}-\mathbf{x}_{\star}))\\
      = &\left\langle \mathbf{P}_k\mathbf{A}(\mathbf{x}^{(k)}-\mathbf{x}_{\star}), \mathbf{P}_k\mathbf{A}(\mathbf{x}^{(k)}-\mathbf{x}_{\star})\right\rangle = \|\mathbf{P}_k(\mathbf{A}(\mathbf{x}^{(k)}-\mathbf{x}_{\star}))\|^2_2
    \end{aligned}
  \end{equation}
  and \eqref{equation:2.8}, it yields
  \begin{equation}\label{equation:2.10}
    \|(\mathbf{I}-\mathbf{P}_k)(\mathbf{A}(\mathbf{x}^{(k)}-\mathbf{x}_{\star}))\|^2_2 = \|\mathbf{A}(\mathbf{x}^{(k)}-\mathbf{x}_{\star})\|^2_2-\|\mathbf{P}_k(\mathbf{A}(\mathbf{x}^{(k)}-\mathbf{x}_{\star}))\|^2_2.
  \end{equation}
  For the second term, by Cauchy's inequality we can deduce that
  \begin{equation}\label{equation:2.11}
    \begin{aligned}
     & \beta^2\|\mathbf{A}(\mathbf{x}^{(k)}-\mathbf{x}^{(k-1)})\|^2_2=\beta^2\|\mathbf{A}(\mathbf{x}^{(k)}-\mathbf{x}_{\star})+ \mathbf{A}(\mathbf{x}_{\star} -\mathbf{x}^{(k-1)})\|^2_2\\
      =&\beta^2\|\mathbf{A}(\mathbf{x}^{(k)}-\mathbf{x}_{\star})\|^2_2 + 2\beta^2\left\langle \mathbf{A}(\mathbf{x}^{(k)}-\mathbf{x}_{\star}), \mathbf{A}(\mathbf{x}_{\star}-\mathbf{x}^{(k-1)})\right\rangle + \|\mathbf{A}(\mathbf{x}^{(k-1)}-\mathbf{x}_{\star})\|^2_2\\
      \leq& \beta ^2 \|\mathbf{A}(\mathbf{x}^{(k)}-\mathbf{x}_{\star})\|^2_2 + 2\beta^2\|\mathbf{A}(\mathbf{x}^{(k)}-\mathbf{x}_{\star})\|^2_2\|\mathbf{A}(\mathbf{x}^{(k-1)}-\mathbf{x}_{\star})\|^2_2 + \beta^2\|\mathbf{A}(\mathbf{x}^{(k-1)}-\mathbf{x}_{\star})\|^2_2\\
     \leq & 2\beta^2\|\mathbf{A}(\mathbf{x}^{(k)}-\mathbf{x}_{\star})\|^2_2 + 2\beta^2\|\mathbf{A}(\mathbf{x}^{(k-1)}-\mathbf{x}_{\star})\|^2_2.
    \end{aligned}
  \end{equation}
  The third term can be rewritten as
  \begin{equation}\label{equation:2.12}
    \begin{aligned}
      &2\beta\left\langle \mathbf{A}(\mathbf{x}^{(k)}-\mathbf{x}_{\star}), \mathbf{A}(\mathbf{x}^{(k)}-\mathbf{x}^{(k-1)})\right\rangle \\
      =&\beta\left\langle \mathbf{A}(\mathbf{x}^{(k)}-\mathbf{x}_{\star}), \mathbf{A}(\mathbf{x}^{(k)}-\mathbf{x}^{(k-1)})\right\rangle + \beta\left\langle \mathbf{A}(\mathbf{x}^{(k)}-\mathbf{x}_{\star}), \mathbf{A}(\mathbf{x}^{(k)}-\mathbf{x}^{(k-1)})\right\rangle\\
      =&\beta\left\langle \mathbf{A}(\mathbf{x}^{(k)}-\mathbf{x}_{\star}), \mathbf{A}(\mathbf{x}^{(k)}-\mathbf{x}_{\star} + \mathbf{x}_{\star}-\mathbf{x}^{(k-1)})\right\rangle\\
       &+ \beta\left\langle \mathbf{A}(\mathbf{x}^{(k)}-\mathbf{x}^{(k-1)}+\mathbf{x}^{(k-1)}-\mathbf{x}_{\star}), \mathbf{A}(\mathbf{x}^{(k)}-\mathbf{x}^{(k-1)})\right\rangle\\
      =&\beta(\|\mathbf{A}(\mathbf{x}^{(k)}-\mathbf{x}_{\star})\|_2^2 + \|\mathbf{A}(\mathbf{x}^{(k)}-\mathbf{x}^{(k-1)})\|_2^2-\|\mathbf{A}(\mathbf{x}^{(k-1)}-\mathbf{x}_{\star})\|_2^2).
    \end{aligned}
  \end{equation}
  For the last term, by \eqref{equation:2.10} and the inequality $\|\mathbf{a}+\mathbf{b}\|^2_2\leq 2\|\mathbf{a}\|^2_2 + 2\|\mathbf{b}\|^2_2$, we have the following estimate
  \begin{equation}\label{equation:2.13}
    \begin{aligned}
      &-2\beta\left\langle \mathbf{P}_k(\mathbf{A}(\mathbf{x}^{(k)}-\mathbf{x}_{\star})), \mathbf{A}(\mathbf{x}^{(k)}-\mathbf{x}^{(k-1)})\right\rangle\\
      =&\beta(\|\mathbf{A}(\mathbf{x}^{(k)}-\mathbf{x}^{(k-1)})-\mathbf{P}_k(\mathbf{A}(\mathbf{x}^{(k)}-\mathbf{x}_{\star}))\|^2-\|\mathbf{A}(\mathbf{x}^{(k)}-\mathbf{x}^{(k-1)})\|^2_2-\|\mathbf{P}_k(\mathbf{A}(\mathbf{x}^{(k)}-\mathbf{x}_{\star}))\|^2_2)\\
      =&\beta(\|(\mathbf{I}-\mathbf{P}_k)(\mathbf{A}(\mathbf{x}^{(k)}-\mathbf{x}_{\star}))-\mathbf{A}(\mathbf{x}^{(k-1)}-\mathbf{x}_{\star})\|^2_2-\|\mathbf{A}(\mathbf{x}^{(k)}-\mathbf{x}^{(k-1)})\|^2_2-\|\mathbf{P}_k(\mathbf{A}(\mathbf{x}^{(k)}-\mathbf{x}_{\star}))\|^2_2)\\
      \leq&\beta(2\|(\mathbf{I}-\mathbf{P}_k)(\mathbf{A}(\mathbf{x}^{(k)}-\mathbf{x}_{\star}))\|_2^2 + 2\|\mathbf{A}(\mathbf{x}^{(k-1)}-\mathbf{x}_{\star})\|^2_2-\|\mathbf{A}(\mathbf{x}^{(k)}-\mathbf{x}^{(k-1)})\|^2_2-\|\mathbf{P}_k(\mathbf{A}(\mathbf{x}^{(k)}-\mathbf{x}_{\star}))\|^2_2)\\
      =&\beta(2\|\mathbf{A}(\mathbf{x}^{(k)}-\mathbf{x}_{\star})\|^2_2-3\|\mathbf{P}_k(\mathbf{A}(\mathbf{x}^{(k)}-\mathbf{x}_{\star}))\|^2_2+2\|\mathbf{A}(\mathbf{x}^{(k-1)}-\mathbf{x}_{\star})\|^2_2-\|\mathbf{A}(\mathbf{x}^{(k)}-\mathbf{x}^{(k-1)})\|^2_2).
    \end{aligned}
  \end{equation}
  Replacing \eqref{equation:2.10}, \eqref{equation:2.11}, \eqref{equation:2.12} and \eqref{equation:2.13} into \eqref{equation:2.7}, we obtain the following inequality
  \begin{equation}\label{equ:2.14}
   \begin{aligned}
    \|\mathbf{x}^{(k+1)} - \mathbf{x}_{\star}\|_{\mathbf{A}^T\mathbf{A}}^2 \leq&(1 + 3\beta + 2\beta^2)\|\mathbf{A}(\mathbf{x}^{(k)}-\mathbf{x}_{\star})\|^2_2\\
    &+(2\beta^2+\beta)\|\mathbf{A}(\mathbf{x}^{(k-1)}-\mathbf{x}_{\star})\|^2_2\\
    &-(3\beta+1)\|\mathbf{P}_k(\mathbf{A}(\mathbf{x}^{(k)}-\mathbf{x}_{\star}))\|^2_2.
   \end{aligned}
\end{equation}
\par
We now establish the inequality relationship between $\|\mathbf{P}_k(\mathbf{A}(\mathbf{x}^{(k)}-\mathbf{x}_{\star}))\|^2_2$ and $\|\mathbf{A}(\mathbf{x}^{(k)}-\mathbf{x}_{\star})\|^2_2$ by detailed derivation.
\par
  Let $\mathbf{E}_k\in \mathbb{R}^{n \times |\tau_k|}$ denote a matrix whose columns in turn consist of all vectors $\mathbf{e}_{j_k}\in\mathbb{R}^n$ with $j_k\in \tau_k$, then $\mathbf{A}_{\tau_k}=\mathbf{A}\mathbf{E}_k$ and $\mathbf{E}_k^T\mathbf{E}_k=\mathbf{I}_{|\tau_k|}$, where $\mathbf{I}_{|\tau_k|} \in \mathbf{R}^{|\tau_k| \times |\tau_k|}$ is the identity matrix. Denote by $\hat{\eta}_k=\mathbf{E}_k^T\mathbf{\eta}_k$, we can get
\begin{equation}\label{equation:2.15}
 \begin{aligned}
  \|\hat{\eta}_k\|^2_2&=\|\mathbf{E}_k^T\mathbf{\eta}_k\|^2_2=(\mathbf{E}_k^T\mathbf{\eta}_k)^T(\mathbf{E}_k^T\mathbf{\eta}_k)=\mathbf{\eta}_k^T\mathbf{E}_k\mathbf{E}_k^T\mathbf{\eta}_k\\
  &=\mathbf{\eta}_k^T \mathbf{\eta}_k = \|\mathbf{\eta}_k\|^2_2 = \sum\limits_{j_k\in \tau_k}|(\mathbf{s}^{(k)})_{j_k}|^2.
 \end{aligned}
\end{equation}
On the one hand, based on Lemma \ref{lemma 3.1}, we can deduce
\begin{equation}\label{equation:2.16}
  \|\mathbf{A}_{\tau_k}\hat{\eta}_k\|^2_2=(\mathbf{A}_{\tau_k}\hat{\eta}_k)^T(\mathbf{A}_{\tau_k}\hat{\eta}_k)=\hat{\eta}_k^T\mathbf{A}_{\tau_k}^T \mathbf{A}_{\tau_k} \hat{\eta}_k \leq \sigma^2_{\max}(\mathbf{A}_{\tau_k})\|\hat{\eta}_k \|^2_2.
\end{equation}
While on the other hand, we have
\begin{equation}\label{equation:2.17}
  \begin{aligned}
    \|\mathbf{A}_{\tau_k}\hat{\eta}_k\|^2_2 &= \hat{\eta}_k^T\mathbf{A}_{\tau_k}^T \mathbf{A}_{\tau_k} \hat{\eta}_k = (\mathbf{E}_k^T\mathbf{\eta}_k)^T \mathbf{A}_{\tau_k}^T \mathbf{A}_{\tau_k} \mathbf{E}_k^T\mathbf{\eta}_k\\
&=(\mathbf{E}_k^T\mathbf{\eta}_k)^T \mathbf{E}_k^T \mathbf{A}^T \mathbf{A} \mathbf{E}_k \mathbf{E}_k^T\mathbf{\eta}_k = (\mathbf{E}_k \mathbf{E}_k^T\mathbf{\eta}_k)^T \mathbf{A}^T \mathbf{A} (\mathbf{E}_k \mathbf{E}_k^T\mathbf{\eta}_k)\\
& = \mathbf{\eta}_k^T \mathbf{A}^T \mathbf{A} \mathbf{\eta}_k = \|\mathbf{A} \mathbf{\eta}_k\|^2_2.
  \end{aligned}
\end{equation}
Hence, based on equations \eqref{equation:2.16} and \eqref{equation:2.17}, we can derive the following inequality
\begin{equation}\label{equ:2.18}
  \|\mathbf{A}\mathbf{\eta}_k\|^2_2\leq \sigma^2_{\max}(\mathbf{A}_{\tau_k})\|\hat{\eta}_k \|^2_2.
\end{equation}
From the definition of $\mathbf{\eta}_k$ in \eqref{mADBCD eta} as well as \eqref{equation:2.15}, it yields that
\begin{equation}\label{equ:2.19}
  \begin{aligned}
    \mathbf{\eta}_k^T \mathbf{s}^{(k)} & = \left(\sum\limits_{j_k\in \tau_k}(\mathbf{s}^{(k)})_{j_k}\mathbf{e}_{j_k}^T\right)\mathbf{s}^{(k)}= \sum\limits_{j_k\in \tau_k}\left((\mathbf{s}^{(k)})_{j_k}\mathbf{e}_{j_k}^T \mathbf{s}^{(k)}\right)\\
    &=\sum\limits_{j_k\in \tau_k} |(\mathbf{s}^{(k)})_{j_k}|^2 = \|\mathbf{\eta}_k\|^2_2 = \|\hat{\mathbf{\eta}}_k\|^2_2.
  \end{aligned}
\end{equation}
It can be inferred from Lemma \ref{lemma 3.1} that
\begin{equation}\label{equ:2.17}
\begin{aligned}
  \|\mathbf{s}^{(k)}\|^2_2&=\|\mathbf{A}^T(\mathbf{b}-\mathbf{A}\mathbf{x}^{(k)})\|^2_2=\|\mathbf{A}^T\mathbf{A}(\mathbf{x}_{\star}-\mathbf{x}^{(k)})\|^2_2\geq \sigma^2_{\min}(\mathbf{A}^T)\|\mathbf{A}(\mathbf{x}^{(k)}-\mathbf{x}_{\star})\|^2_2\\
  &= \sigma_{\min}(\mathbf{A}^T\mathbf{A})\|\mathbf{A}(\mathbf{x}^{(k)}-\mathbf{x}_{\star})\|^2_2 = \sigma^2_{\min}(\mathbf{A})\|\mathbf{A}(\mathbf{x}^{(k)}-\mathbf{x}_{\star})\|^2_2.
\end{aligned}
\end{equation}
According to \eqref{equ:2.17} and  the definition of the set $\tau_k$ in \eqref{mADBCD set}, it holds that
\begin{equation}\label{equ:2.21}
 \begin{aligned}
  \|\mathbf{\eta}_k\|^2_2&=\mathbf{\eta}_k^T \mathbf{s}^{(k)}= \sum\limits_{j_k\in \tau_k} |(\mathbf{s}^{(k)})_{j_k}|^2\\
  & \geq |\tau_k| \frac{\|\mathbf{s}^{(k)}\|^2_2}{n} \geq |\tau_k| \frac{\sigma_{\min}(\mathbf{A}^T\mathbf{A})}{n} \|\mathbf{A}(\mathbf{x}^{(k)}-\mathbf{x}_{\star})\|^2_2\\
  & = |\tau_k| \frac{\sigma^2_{\min}(\mathbf{A})}{n} \|\mathbf{A}(\mathbf{x}^{(k)}-\mathbf{x}_{\star})\|^2_2.
 \end{aligned}
\end{equation}
Therefore, from equations \eqref{equ:2.18}, \eqref{equ:2.19} and \eqref{equ:2.21}, we have
\begin{equation}\label{equ:2.22}
  \begin{aligned}
    \|\mathbf{P}_k(\mathbf{A}(\mathbf{x}^{(k)}-\mathbf{x}_{\star}))\|^2_2 &= \frac{|\mathbf{\eta}_k\mathbf{s}^{(k)}|^2}{\|\mathbf{A}\mathbf{\eta}_k\|^2_2} =  \frac{|\mathbf{\eta}_k\mathbf{s}^{(k)}|\|\hat{\eta}_k\|^2_2}{\|\mathbf{A}\mathbf{\eta}_k\|^2_2}\\ & \geq \frac{\|\mathbf{\eta}_k\|^2_2}{\sigma^2_{\max}(\mathbf{A}_{\tau_k})} \geq \frac{|\tau_k|\sigma^2_{\min}(\mathbf{A})}{n \sigma^2_{\max}(\mathbf{A}_{\tau_k})} \|\mathbf{A}(\mathbf{x}^{(k)}-\mathbf{x}_{\star})\|^2_2.
  \end{aligned}
\end{equation}
\par
By substituting \eqref{equ:2.22} into \eqref{equ:2.14}, the following error estimation equation
\begin{equation}\label{equ:2.23}
  \begin{aligned}
   \|\mathbf{x}^{(k+1)} - \mathbf{x}_{\star}\|_{\mathbf{A}^T\mathbf{A}}^2 \leq&(1 + 3\beta + 2\beta^2)\|\mathbf{A}(\mathbf{x}^{(k)}-\mathbf{x}_{\star})\|^2_2+(2\beta^2+\beta)\|\mathbf{A}(\mathbf{x}^{(k-1)}-\mathbf{x}_{\star})\|^2_2\\
   &-(3\beta+1)\|\mathbf{P}_k(\mathbf{A}(\mathbf{x}^{(k)}-\mathbf{x}_{\star}))\|^2_2\\
   \leq & \left(1 + 3\beta + 2\beta^2  - (3\beta+1)\frac{|\tau_k|\sigma^2_{\min}(\mathbf{A})}{n \sigma^2_{\max}(\mathbf{A}_{\tau_k})} \right) \|\mathbf{A}(\mathbf{x}^{(k)}-\mathbf{x}_{\star})\|^2_2\\
   & +(2\beta^2+\beta)\|\mathbf{A}(\mathbf{x}^{(k-1)}-\mathbf{x}_{\star})\|^2_2\\
   & = \gamma_1\|\mathbf{A}(\mathbf{x}^{(k)}-\mathbf{x}_{\star})\|^2_2 + \gamma_2 \|\mathbf{A}(\mathbf{x}^{(k-1)}-\mathbf{x}_{\star})\|^2_2\\
   & = \gamma_1 \|\mathbf{x}^{(k)}-\mathbf{x}_{\star}\|^2_{\mathbf{A}^T \mathbf{A}} + \gamma_2 \|\mathbf{x}^{(k-1)}-\mathbf{x}_{\star}\|^2_{\mathbf{A}^T \mathbf{A}}
  \end{aligned}
\end{equation}
can be derived. For any nonzero vector $\hat{\mathbf{\xi}}\in \mathbb{R}^{|\tau_k|}$, it follows that
\begin{equation}
  \begin{aligned}
    \sigma^2_{\min}(\mathbf{A})=\sigma_{\min}(\mathbf{A}^T\mathbf{A})&\leq \frac{(\mathbf{E}_k\hat{\mathbf{\xi}})^T\mathbf{A}^T\mathbf{A}(\mathbf{E}_k\hat{\mathbf{\xi}})}{(\mathbf{E}_k\hat{\mathbf{\xi}})^T(\mathbf{E}_k\hat{\mathbf{\xi}})}=\frac{\hat{\mathbf{\xi}}^T\mathbf{E}_k^T \mathbf{A}^T \mathbf{A}\mathbf{E}_k\hat{\mathbf{\xi}}}{\hat{\mathbf{\xi}}^T\mathbf{E}_k^T \mathbf{E}_k\hat{\mathbf{\xi}}}\\
    &= \frac{\hat{\mathbf{\xi}}^T\mathbf{A}_{\tau_k}^T \mathbf{A}_{\tau_k} \hat{\mathbf{\xi}}}{\hat{\mathbf{\xi}}^T\hat{\mathbf{\xi}}} \leq \sigma_{\max}(\mathbf{A}_{\tau_k}^T \mathbf{A}_{\tau_k})=\sigma^2_{\max}(\mathbf{A}_{\tau_k}).
  \end{aligned}
\end{equation}
Accordingly, $\frac{|\tau_k|\sigma^2_{\min}(\mathbf{A})}{n \sigma^2_{\max}( \mathbf{A}_{\tau_k})}\in(0,1)$ holds, which makes $\gamma_1\geq 0$. Let $F^{(k)}:=\|\mathbf{x}^{(k)}-\mathbf{x}_{\star}\|^2_{\mathbf{A^T\mathbf{A}}}$, then \eqref{equation:2.13} can be written as
$$
F^{(k+1)}\leq \gamma_1 F^{(k)} + \gamma_2 F^{(k-1)}.
$$
The range of values of $\beta$ shows that $\gamma_2 \geq 0$.  If $\gamma_2=0$, then it follows easily that $\beta = 0$ and $q=\gamma_1\geq 0$. Also because $\gamma_1 + \gamma_2<1$ holds by assumption, the conditions of Lemma \ref{lemma 3.2} are satisfied. It follows from the conclusion of Lemma \ref{lemma 3.2} that the error estimate \eqref{inequality:2.6} holds and it is also further shown that the iterative sequence $\left\{\mathbf{x}^{(k)}\right\}_{k=0}^{\infty}$ generated by the mADBCD method converges to $\mathbf{x}_{\star}$.
\end{proof}
\begin{remark}
We discuss the existence of $\beta$ in Theorem \ref{theo:3.1}, ensuring that the crucial condition $\gamma_1+\gamma_2<1$ holds. Let
$$
\alpha_k = \frac{|\tau_k|\sigma^2_{\min}(\mathbf{A})}{n \sigma^2_{\max}( \mathbf{A}_{\tau_k})},
$$ then the important condition $\gamma_1+\gamma_2<1$ can be equivalently rewritten as
$$
4\beta^2+(4-3\alpha_k)\beta-\alpha_k<0.
$$
Since $\Delta_k=(4-3\alpha_k)^2+16\geq 16$, the $\beta$ satisfying $\gamma_1 + \gamma_2<1$ exists, and its value is dependent on the iteration count $k$.
\end{remark}
\begin{remark}\label{remark:3.3}
  When $\beta=0$, basis on \eqref{equ:2.23} it can be easily deduced that
  $$
  \|\mathbf{x}^{(k+1)}-\mathbf{x}_{\star}\|^2_{\mathbf{A}^T\mathbf{A}}\leq \left(1-\frac{|\tau_k|\sigma^2_{\min}(\mathbf{A})}{n \sigma^2_{\max}( \mathbf{A}_{\tau_k})}\right)\|\mathbf{x}^{(k)}-\mathbf{x}_{\star}\|^2_{\mathbf{A}^T\mathbf{A}}.
  $$
\end{remark}
Theorem \ref{theo:3.1} indicates that the upper bound on the convergence rate of the mADBCD method is related to the momentum parameter $\beta$, the minimum singular value of the coefficient matrix of the linear least-squares problem \eqref{linear least}, and the maximum singular value of the selected column submatrix.
However, the mADBCD method may converge faster than this upper bound.

\section{The CS-mADBCD method and its convergence}\label{sec:4}
In this section, we provide the motivation for combining the count sketch technique \cite{EChCFC, EThZh} with the mADBCD method. Finally, we propose the CS-mADBCD method, which is derived by integrating the count sketch technique with the mADBCD method, and establish the convergence of the proposed method.
\par
The motivation for combining the mADBCD method with the count sketch technique primarily lies in its ability to reduce computational complexity by compressing the matrix to a smaller, approximate form, while maintaining computational accuracy and significantly improving efficiency. Additionally, integrating count sketch with mADBCD minimizes large matrix operations, substantially reducing the computational time required to solve large-scale linear least squares problems. Furthermore, incorporating count sketch enables the mADBCD method to scale effectively to high-dimensional data, thereby expanding its applicability and enhancing its ability to efficiently handle large-scale linear systems.
We now list the definition of count sketch matrix or sparse embedding matrix and provide the related CS-mADBCD method in Algorithm \ref{Alg:CS-mADBCD}.
\begin{definition}(\cite{EChCFC, EThZh})
A count sketch matrix is defined to be $\mathbf{S}=\Phi D \in$ $\mathbb{R}^{d \times m}$. Here, $D$ is an $m \times m$ random diagonal matrix with each diagonal entry independently chosen to be +1 or -1 with equal probability, and $\Phi \in\{0,1\}^{d \times m}$ is a $d \times m$ binary matrix with $\Phi_{h(i), i}=1$ and all remaining entries 0 , where $h:[m] \rightarrow[d]$ is a random map such that for each $i \in[m], h(i)=j$ with probability $1 / d$ for each $j \in[d]$.
\end{definition}
\begin{algorithm}[H]
  \caption{The CS-mADBCD method}\label{Alg:CS-mADBCD}
  \KwIn {$\mathbf{A}, \mathbf{b},\ell,\beta \geq 0$ and $\mathbf{x}^{(0)}=\mathbf{x}^{(1)} \in \mathbb{R}^n$}
	\KwOut {Approximate solution {$\mathbf{x}^{(\ell)}$}}
  {\bfseries Initial}: Create a count sketch matrix $\mathbf{S}\in\mathbb{R}^{d \times m }$ with $d<m$, and compute $\tilde{\mathbf{A}}=\mathbf{S}\mathbf{A}$,   $\tilde{\mathbf{b}}=\mathbf{S}\mathbf{b}$ and $ \tilde{\mathbf{s}}^{(1)}=\tilde{\mathbf{A}}^T(\tilde{\mathbf{b}}-\tilde{\mathbf{A}}\mathbf{x}^{(1)})$.\\
  \For{$k=1, 2, \cdots,\ell-1$}
	{Determine the control index set
  \begin{equation}\label{CS-mADBCD set}
    \tilde{\tau}_k=\left\{\ j_{k}\mid |(\tilde{\mathbf{s}}^{(k)})_{j_k}|^{2} \geq \frac{\|\tilde{\mathbf{s}}^{(k)}\|_2^2}{n}\right\}.
  \end{equation}
  \\
  Compute
  \begin{equation}\label{CS-mADBCD eta}
    \tilde{\mathbf{\eta}}_k = \sum\limits_{j_k\in \tilde{\tau}_k}(\tilde{\mathbf{s}}^{(k)})_{j_k}\mathbf{e}_{j_k}.
  \end{equation}
  Set
  \begin{equation}\label{CS-mADBCD iteration}
    \mathbf{x}^{(k+1)}=\mathbf{x}^{(k)}+\frac{\tilde{\tau}_k^T\tilde{\mathbf{s}}^{(k)}}{\|\tilde{\mathbf{A}}\tilde{\mathbf{\eta}}_k\|^2_2}\tilde{\mathbf{\eta}}_k + \beta(\mathbf{x}^{(k)}-\mathbf{x}^{(k-1)}).
  \end{equation}
	}
\end{algorithm}
Similarly, we also have the following property.
\begin{property}\label{property 4.1}
  Starting from any initial vector $\mathbf{x}^{(0)}$, the iterative sequence $\left\{\mathbf{x}^{(k)}\right\}_{k=0}^{\infty}$ generated by the CS-mADBCD method is well-defined.
\end{property}
In the following, we first present two important lemmas that will play a crucial role in the proof of the convergence analysis of the CS-mADBCD method.
\begin{lemma}(\cite{EWood})\label{lemma:4.1}
   If $\mathbf{S} \in \mathrm{R}^{\mathrm{d} \times \mathrm{m}}$ is a count sketch transform with $d=(n^2+n) /(\delta \epsilon^2)$, where $0<\delta, \epsilon<1$, then we have that
   $$
   (1-\epsilon)\|\mathbf{A}\mathbf{x}\|_2 \leq\|\mathbf{S} \mathbf{A} \mathbf{x}\|_2 \leq(1+\epsilon)\|\mathbf{A} \mathbf{x}\|_2 \quad \text { for all } \mathbf{x} \in R^n \text {, }
   $$
   and
   $$
(1-\epsilon) \sigma_i(\mathbf{A}) \leq \sigma_i(\mathbf{S} \mathbf{A}) \leq(1+\epsilon) \sigma_i(\mathbf{A}) \quad \text { for all } 1 \leq i \leq n
$$
hold with probability $1-\delta$.
\end{lemma}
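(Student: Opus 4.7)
The plan is to reduce both claims to a single event: that the symmetric matrix $\mathbf{M} := \hat{\mathbf{U}}^T \mathbf{S}^T \mathbf{S} \hat{\mathbf{U}} - \mathbf{I}_n$ satisfies $\|\mathbf{M}\|_2 \leq \epsilon$, where $\hat{\mathbf{U}} \in \mathbb{R}^{m \times n}$ is any matrix whose columns form an orthonormal basis of $\mathcal{R}(\mathbf{A})$ (for instance the first $n$ columns of the left factor in the SVD of $\mathbf{A}$). Writing $\mathbf{A} = \hat{\mathbf{U}} \boldsymbol{\Sigma} \mathbf{V}^T$ and setting $\mathbf{y} := \boldsymbol{\Sigma} \mathbf{V}^T \mathbf{x}$, one has $\mathbf{A}\mathbf{x} = \hat{\mathbf{U}}\mathbf{y}$ and $\|\mathbf{y}\|_2 = \|\mathbf{A}\mathbf{x}\|_2$, so on this event
$$
\bigl| \|\mathbf{S}\mathbf{A}\mathbf{x}\|_2^2 - \|\mathbf{A}\mathbf{x}\|_2^2 \bigr| = |\mathbf{y}^T \mathbf{M} \mathbf{y}| \leq \epsilon \|\mathbf{A}\mathbf{x}\|_2^2
$$
for every $\mathbf{x} \in \mathbb{R}^n$. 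Taking square roots and using the elementary bounds $\sqrt{1-\epsilon}\geq 1-\epsilon$ and $\sqrt{1+\epsilon}\leq 1+\epsilon$ on $(0,1)$ gives the first displayed inequality of the lemma. The singular-value statement then follows from the Courant--Fischer min-max characterization applied to $\mathbf{A}^T\mathbf{A}$ and $\mathbf{A}^T\mathbf{S}^T\mathbf{S}\mathbf{A}$: the norm sandwich yields $(1-\epsilon)^2 \mathbf{x}^T \mathbf{A}^T\mathbf{A}\mathbf{x} \leq \mathbf{x}^T \mathbf{A}^T \mathbf{S}^T \mathbf{S} \mathbf{A} \mathbf{x} \leq (1+\epsilon)^2 \mathbf{x}^T \mathbf{A}^T\mathbf{A}\mathbf{x}$, hence the corresponding eigenvalue sandwich, and taking square roots recovers the claimed $(1\pm\epsilon)\sigma_i(\mathbf{A})$ bounds.

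The probabilistic heart of the argument is a second-moment bound on $\mathbf{M}$. From $\mathbf{S} = \Phi D$ a direct computation gives $(\mathbf{S}^T\mathbf{S})_{ij} = D_{ii} D_{jj}\, \mathbf{1}[h(i)=h(j)]$ for $i \neq j$ and equals $1$ when $i=j$, so
$$
M_{pq} = \sum_{i \neq j} D_{ii} D_{jj}\, \mathbf{1}[h(i) = h(j)]\, \hat{U}_{ip}\, \hat{U}_{jq},
$$
a symmetric bilinear form in the independent signs $\{D_{ii}\}$ and the independent hash $h$. I would evaluate $\mathbb{E}[\|\mathbf{M}\|_F^2] = \sum_{p,q} \mathbb{E}[M_{pq}^2]$ by squaring and exploiting that, by independence of the signs, only index quadruples $(i,j,k,\ell)$ with $\{i,j\}=\{k,\ell\}$ survive; each such term carries a collision factor $\mathbb{P}[h(i)=h(j)] = 1/d$ together with a product of four entries of $\hat{\mathbf{U}}$. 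Collapsing the resulting sums via the orthonormality identity $\sum_i \hat{U}_{ip}\hat{U}_{iq} = \delta_{pq}$ yields the bound $\mathbb{E}[\|\mathbf{M}\|_F^2] \leq (n^2+n)/d$.

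Substituting $d = (n^2+n)/(\delta\epsilon^2)$ forces $\mathbb{E}[\|\mathbf{M}\|_F^2] \leq \delta\epsilon^2$, and Markov's inequality applied to the nonnegative random variable $\|\mathbf{M}\|_F^2$ produces the tail bound $\|\mathbf{M}\|_F \leq \epsilon$ with probability at least $1-\delta$. Since the operator norm is dominated by the Frobenius norm, $\|\mathbf{M}\|_2 \leq \epsilon$ on this same event, closing the reduction from the first paragraph.

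The main obstacle I anticipate is the second-moment bookkeeping: one must partition the index quadruples by their collision pattern under $h$, use sign symmetry to discard every odd-order term, and recognize the surviving sums as inner products of columns of $\hat{\mathbf{U}}$ so that orthonormality trims the bound down to $O(n^2/d)$ rather than the trivial $O(n^4/d)$. Once that calculation is carried out cleanly, the surrounding steps---reducing to a subspace embedding, applying Markov, dominating the spectral norm by the Frobenius norm, and invoking Courant--Fischer for the singular values---are essentially routine.
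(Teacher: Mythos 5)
The paper does not prove this lemma at all --- it is imported verbatim from Woodruff's monograph \cite{EWood} as a known property of count sketch matrices --- so there is no in-paper argument to compare against. Your proposal is a correct, self-contained reconstruction of the standard moment-method proof given in that reference: the reduction to bounding $\|\hat{\mathbf{U}}^T\mathbf{S}^T\mathbf{S}\hat{\mathbf{U}}-\mathbf{I}_n\|_2$, the computation $\mathbb{E}\|\mathbf{M}\|_F^2\leq (n^2+n)/d$ (the two surviving pairings $\{i,j\}=\{k,\ell\}$ contribute $\sum_{i\neq j}\|\hat{u}_i\|_2^2\|\hat{u}_j\|_2^2/d\leq n^2/d$ and $\sum_{i\neq j}\langle\hat{u}_i,\hat{u}_j\rangle^2/d\leq n/d$, exactly as you indicate), Markov's inequality, and Courant--Fischer for the singular values all check out. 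The only implicit hypothesis worth flagging is that the collision probability $\mathbb{P}[h(i)=h(j)]=1/d$ for $i\neq j$ requires the hash values to be (at least pairwise) independent, which the paper's informal definition of $h$ does not quite state but which the standard count sketch construction supplies.
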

\begin{lemma}(\cite{EZhLih})\label{lemma:4.2}
  Let $\mathbf{S} \in \mathrm{R}^{\mathrm{d} \times \mathrm{m}}$  be given as in Lemma \ref{lemma:4.1}. Then $\mathcal{R}(\mathbf{\mathbf{A}^T\mathbf{S}^T})=\mathcal{R}(\mathbf{A}^T)$ holds with probability $1-\delta$.
\end{lemma}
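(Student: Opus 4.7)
The plan is to decompose the claim into the two set inclusions $\mathcal{R}(\mathbf{A}^T\mathbf{S}^T) \subseteq \mathcal{R}(\mathbf{A}^T)$ and $\mathcal{R}(\mathbf{A}^T) \subseteq \mathcal{R}(\mathbf{A}^T\mathbf{S}^T)$, and then to verify each separately. The forward inclusion is purely deterministic and essentially algebraic: every column of $\mathbf{A}^T\mathbf{S}^T$ has the form $\mathbf{A}^T\mathbf{v}$ for $\mathbf{v}$ the corresponding column of $\mathbf{S}^T$, so it automatically lies in $\mathcal{R}(\mathbf{A}^T)$. Thus no probability is expended in this direction.

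The reverse inclusion is where Lemma \ref{lemma:4.1} comes into play, and I would handle it by a rank (equivalently, dimension) argument rather than by chasing individual vectors. Since $\mathbf{A}\in\mathbb{R}^{m\times n}$ has full column rank $n$ by the paper's standing hypothesis, we have $\mathrm{rank}(\mathbf{A}^T)=n$, so $\mathcal{R}(\mathbf{A}^T)$ is an $n$-dimensional subspace of $\mathbb{R}^n$, i.e.\ all of $\mathbb{R}^n$. The task therefore reduces to showing $\mathrm{rank}(\mathbf{A}^T\mathbf{S}^T)=n$, because an $n$-dimensional subspace contained in an $n$-dimensional ambient space must coincide with it.

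To get this rank equality, I would invoke the singular-value preservation part of Lemma \ref{lemma:4.1}: with probability at least $1-\delta$, $\sigma_n(\mathbf{S}\mathbf{A}) \geq (1-\epsilon)\sigma_n(\mathbf{A})$. Since $\mathbf{A}$ has full column rank, $\sigma_n(\mathbf{A})>0$, and because $0<\epsilon<1$, the right-hand side is strictly positive. Hence $\sigma_n(\mathbf{S}\mathbf{A})>0$, so $\mathbf{S}\mathbf{A}\in\mathbb{R}^{d\times n}$ has full column rank $n$. Taking the transpose preserves rank, yielding $\mathrm{rank}(\mathbf{A}^T\mathbf{S}^T)=\mathrm{rank}(\mathbf{S}\mathbf{A})=n$, which closes the reverse inclusion.

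The main (and essentially the only) obstacle is making sure the probability bound is invoked cleanly: the event $\{\sigma_n(\mathbf{S}\mathbf{A})\geq(1-\epsilon)\sigma_n(\mathbf{A})\}$ is already one of the events covered by Lemma \ref{lemma:4.1} with probability $1-\delta$, so no union bound or secondary probabilistic argument is needed, and the final conclusion $\mathcal{R}(\mathbf{A}^T\mathbf{S}^T)=\mathcal{R}(\mathbf{A}^T)$ inherits the same probability $1-\delta$. I would write the short argument in three lines: (i) the trivial inclusion, (ii) the rank identity obtained from the smallest-singular-value bound, and (iii) the dimension-matching conclusion.
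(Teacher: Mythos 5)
Your argument is correct. Note first that the paper itself gives no proof of this lemma; it is imported verbatim from the cited reference \cite{EZhLih}, so there is no in-paper argument to compare against. Your three-step proof is a valid, self-contained justification: the inclusion $\mathcal{R}(\mathbf{A}^T\mathbf{S}^T)\subseteq\mathcal{R}(\mathbf{A}^T)$ is indeed deterministic, and the reverse direction follows from $\sigma_{\min}(\mathbf{S}\mathbf{A})\geq(1-\epsilon)\sigma_{\min}(\mathbf{A})>0$ on the event of Lemma \ref{lemma:4.1}, which gives $\operatorname{rank}(\mathbf{A}^T\mathbf{S}^T)=\operatorname{rank}(\mathbf{S}\mathbf{A})=n$ and hence equality of the two subspaces of $\mathbb{R}^n$. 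Two small observations. First, under the paper's standing full-column-rank hypothesis your forward inclusion is actually superfluous: both $\mathcal{R}(\mathbf{A}^T)$ and $\mathcal{R}(\mathbf{A}^T\mathbf{S}^T)$ are $n$-dimensional subspaces of $\mathbb{R}^n$ and therefore each equals $\mathbb{R}^n$ outright, so the lemma in this setting is really just the statement that $\mathbf{S}\mathbf{A}$ retains full column rank. Second, the version in \cite{EZhLih} is stated without assuming full column rank; in that generality the trivial inclusion does carry weight, and the dimension count instead uses that \emph{all} singular values are preserved up to factors in $[1-\epsilon,1+\epsilon]$, so $\operatorname{rank}(\mathbf{S}\mathbf{A})=\operatorname{rank}(\mathbf{A})$ and the two nested subspaces have equal dimension. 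Your proof is the appropriate specialization and correctly inherits the probability $1-\delta$ from the single event of Lemma \ref{lemma:4.1} without any union bound.
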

\begin{theorem}\label{theo:4.1}
  Consider the large linear least-squares problem $\underset{\mathbf{x}\in\mathbb{R}^n}{\min}\|\mathbf{b}-\mathbf{A}\mathbf{x}\|_2$, where $\mathbf{A}\in \mathbb{R}^{m \times n}(m \geq n)$ is of full column rank and $\mathbf{b}\in\mathbb{R}^m$ is a given vector. Assume that the following expressions
  $$
  \tilde{\gamma}_1=(1 + 3\beta + 2\beta^2)\frac{(1+\epsilon)^2}{(1-\epsilon)^2} - (3\beta+1)\frac{|\tilde{\tau}_k|\sigma^2_{\min}(\mathbf{A})}{n \sigma^2_{\max}( \mathbf{A}_{\tilde{\tau}_k})}  \quad \text{and} \quad \tilde{\gamma}_2= (2\beta^2+\beta)\frac{(1+\epsilon)^2}{(1-\epsilon)^2}
  $$
  satisfy $\tilde{\gamma}_1+\tilde{\gamma}_2<1$. Then the iteration sequence $\left\{\mathbf{x}^{(k)}\right\}_{k=0}^{\infty}$, generated by the CS-mADBCD method starting from any initial guess $\mathbf{x}^{(0)}$, exists and converges to the unique least-squares solution $\mathbf{x}_{\star}=\mathbf{A}^{\dagger}\mathbf{b}$ with the probability $1-\delta$, and we have the following error estimate
  \begin{equation}\label{ineq:4.4}
    \|\mathbf{x}^{(k+1)}-\mathbf{x}_{\star}\|^2_{\mathbf{A^T\mathbf{A}}} \leq \tilde{q}^k(1+\tau)\|\mathbf{x}^{(0)}-\mathbf{x}_{\star}\|^2_{\mathbf{A^T\mathbf{A}}},
   \end{equation}
   where $\tilde{q}=\frac{\tilde{\gamma}_1+\sqrt{\tilde{\gamma}_1^2+4 \tilde{\gamma}_2}}{2}$ and $\tau=\tilde{q}-\tilde{\gamma}_1$. Moreover, $\tilde{\gamma}_1+\tilde{\gamma}_2 \leq \tilde{q}<1$.
\end{theorem}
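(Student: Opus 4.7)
The plan is to run the entire Theorem \ref{theo:3.1} argument on the sketched data $\tilde{\mathbf{A}} = \mathbf{S}\mathbf{A}$, $\tilde{\mathbf{b}} = \mathbf{S}\mathbf{b}$, producing a two-term recursion in the $\tilde{\mathbf{A}}^T\tilde{\mathbf{A}}$ energy norm, and then to transfer that recursion into the $\mathbf{A}^T\mathbf{A}$ energy norm via the two-sided comparison $(1-\epsilon)^2\|\mathbf{A}\mathbf{v}\|_2^2 \le \|\tilde{\mathbf{A}}\mathbf{v}\|_2^2 \le (1+\epsilon)^2\|\mathbf{A}\mathbf{v}\|_2^2$ of Lemma \ref{lemma:4.1}, finally invoking Lemma \ref{lemma 3.2} to close out the estimate. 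I would begin by restricting attention to the event $\mathcal{E}$ on which both the norm/singular-value inequalities of Lemma \ref{lemma:4.1} and the range identity of Lemma \ref{lemma:4.2} simultaneously hold; the hypothesis on $d$ gives $\mathbb{P}(\mathcal{E}) \ge 1-\delta$. On $\mathcal{E}$ the sketched matrix $\tilde{\mathbf{A}}$ inherits full column rank from $\mathbf{A}$ so that Property \ref{property 4.1} applies, and the combination of Lemma \ref{lemma:4.2} with $\mathbf{A}^T(\mathbf{b}-\mathbf{A}\mathbf{x}_\star)=0$ preserves the identity $\tilde{\mathbf{A}}^T(\tilde{\mathbf{b}}-\tilde{\mathbf{A}}\mathbf{x}_\star) = 0$, so that $\tilde{\mathbf{s}}^{(k)} = \tilde{\mathbf{A}}^T\tilde{\mathbf{A}}(\mathbf{x}_\star - \mathbf{x}^{(k)})$ --- the engine driving the calculation of \eqref{equation:2.6} --- remains available.

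Next, with $\tilde{\mathbf{P}}_k = \tilde{\mathbf{A}}\tilde{\mathbf{\eta}}_k\tilde{\mathbf{\eta}}_k^T \tilde{\mathbf{A}}^T / \|\tilde{\mathbf{A}}\tilde{\mathbf{\eta}}_k\|_2^2$, I would replay the chain of identities leading from \eqref{equation:2.6} through \eqref{equ:2.14} to \eqref{equ:2.22} verbatim under the substitutions $(\mathbf{A},\mathbf{\eta}_k,\mathbf{P}_k,\mathbf{s}^{(k)},\tau_k) \mapsto (\tilde{\mathbf{A}},\tilde{\mathbf{\eta}}_k,\tilde{\mathbf{P}}_k,\tilde{\mathbf{s}}^{(k)},\tilde{\tau}_k)$. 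This produces the intermediate sketched recursion
\[
\|\mathbf{x}^{(k+1)}-\mathbf{x}_\star\|^2_{\tilde{\mathbf{A}}^T\tilde{\mathbf{A}}} \le \Bigl(1+3\beta+2\beta^2 - (3\beta+1)\frac{|\tilde{\tau}_k|\sigma^2_{\min}(\tilde{\mathbf{A}})}{n\,\sigma^2_{\max}(\tilde{\mathbf{A}}_{\tilde{\tau}_k})}\Bigr)\|\mathbf{x}^{(k)}-\mathbf{x}_\star\|^2_{\tilde{\mathbf{A}}^T\tilde{\mathbf{A}}} + (2\beta^2+\beta)\|\mathbf{x}^{(k-1)}-\mathbf{x}_\star\|^2_{\tilde{\mathbf{A}}^T\tilde{\mathbf{A}}},
\]
which is the sketched analogue of \eqref{equ:2.23}; crucially, no step of this derivation uses properties of $\mathbf{A}$ not shared by $\tilde{\mathbf{A}}$ on the event $\mathcal{E}$.

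Finally, I would switch norms on both sides. Applying $\|\cdot\|^2_{\tilde{\mathbf{A}}^T\tilde{\mathbf{A}}} \ge (1-\epsilon)^2\|\cdot\|^2_{\mathbf{A}^T\mathbf{A}}$ on the left and $\|\cdot\|^2_{\tilde{\mathbf{A}}^T\tilde{\mathbf{A}}} \le (1+\epsilon)^2\|\cdot\|^2_{\mathbf{A}^T\mathbf{A}}$ on the right introduces an overall factor $(1+\epsilon)^2/(1-\epsilon)^2$ on the $(1+3\beta+2\beta^2)$-coefficient and on the $(2\beta^2+\beta)$-coefficient, producing $\tilde{\gamma}_2$ directly. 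For the negative term I would further use $\sigma^2_{\min}(\tilde{\mathbf{A}}) \ge (1-\epsilon)^2 \sigma^2_{\min}(\mathbf{A})$ from Lemma \ref{lemma:4.1} together with $\sigma^2_{\max}(\tilde{\mathbf{A}}_{\tilde{\tau}_k}) \le (1+\epsilon)^2 \sigma^2_{\max}(\mathbf{A}_{\tilde{\tau}_k})$ (obtained by applying Lemma \ref{lemma:4.1} to the submatrix $\mathbf{A}_{\tilde{\tau}_k}$); these give $\tfrac{\sigma^2_{\min}(\tilde{\mathbf{A}})}{\sigma^2_{\max}(\tilde{\mathbf{A}}_{\tilde{\tau}_k})}\cdot \tfrac{(1+\epsilon)^2}{(1-\epsilon)^2} \ge \tfrac{\sigma^2_{\min}(\mathbf{A})}{\sigma^2_{\max}(\mathbf{A}_{\tilde{\tau}_k})}$, so the $(1+\epsilon)^2/(1-\epsilon)^2$ factor attached to the negative term is absorbed into the singular-value ratio, replacing it by its $\mathbf{A}$-counterpart and yielding exactly $\tilde{\gamma}_1$. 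Setting $F^{(k)} := \|\mathbf{x}^{(k)}-\mathbf{x}_\star\|^2_{\mathbf{A}^T\mathbf{A}}$ and invoking Lemma \ref{lemma 3.2} under the assumption $\tilde{\gamma}_1+\tilde{\gamma}_2 < 1$ then delivers \eqref{ineq:4.4}. The main obstacle I anticipate is the careful bookkeeping of the $(1\pm\epsilon)$ factors in the norm change: because the bracketed coefficient contains a sign-negative contribution, one cannot mechanically multiply through by the norm-comparison constants without reversing the inequality, so the negative term must first be lower-bounded by its $\mathbf{A}$-analogue via the singular-value sandwich before the recursion can be expressed entirely in the $\mathbf{A}^T\mathbf{A}$ norm with coefficients $\tilde{\gamma}_1,\tilde{\gamma}_2$ exactly as stated.
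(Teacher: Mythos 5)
Your proposal is essentially the paper's own proof: run the Theorem \ref{theo:3.1} recursion \eqref{equ:2.23} on the sketched pair $(\tilde{\mathbf{A}},\tilde{\mathbf{b}})$ (justified by Lemma \ref{lemma:4.2}), transfer to the $\mathbf{A}^T\mathbf{A}$ energy norm via Lemma \ref{lemma:4.1}, and close with Lemma \ref{lemma 3.2}; the structure, the lemmas invoked, and the final constants all coincide with the paper's argument. One caveat on the step you yourself flag as the main obstacle: the negative term never actually acquires a $(1+\epsilon)^2/(1-\epsilon)^2$ factor to be ``absorbed''---upper-bounding $-c\,\|\cdot\|^2_{\tilde{\mathbf{A}}^T\tilde{\mathbf{A}}}$ forces the lower bound $\|\cdot\|^2_{\tilde{\mathbf{A}}^T\tilde{\mathbf{A}}}\ge(1-\epsilon)^2\|\cdot\|^2_{\mathbf{A}^T\mathbf{A}}$, whose $(1-\epsilon)^2$ cancels the $(1-\epsilon)^{-2}$ coming from the left-hand side, so the singular-value sandwich then leaves a residual degradation factor $(1-\epsilon)^2/(1+\epsilon)^2$ multiplying $\frac{|\tilde{\tau}_k|\sigma^2_{\min}(\mathbf{A})}{n\sigma^2_{\max}(\mathbf{A}_{\tilde{\tau}_k})}$ rather than reproducing $\tilde{\gamma}_1$ exactly as stated; the paper's own passage from \eqref{equ:4.4} to \eqref{inequ:4.17} contains the identical imprecision, so you are faithfully reproducing the published argument, but neither derivation yields the displayed $\tilde{\gamma}_1$ without that extra factor.
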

\begin{proof}
  From Lemma \ref{lemma:4.2}, it can be inferred that the CS-mADBCD method is equivalent to applying the mADBCD method to the sketched linear least-squares problem $\underset{\mathbf{x}\in\mathbb{R}^n}{\min}\|\tilde{\mathbf{b}}-\tilde{\mathbf{A}}\mathbf{x}\|_2$, which allows us to utilize the convergence results of the mADBCD method directly. Hence, based on \eqref{equ:2.23}, it holds that
  \begin{equation}\label{equ:4.4}
    \begin{aligned}
     \|\mathbf{x}^{(k+1)} - \mathbf{x}_{\star}\|_{\tilde{\mathbf{A}}^T\tilde{\mathbf{A}}}^2 \leq&\left(1 + 3\beta + 2\beta^2  - (3\beta+1)\frac{|\tilde{\tau}_k|\sigma^2_{\min}(\tilde{\mathbf{A}})}{n \sigma^2_{\max}(\tilde{\mathbf{A}}_{\tilde{\tau}_k})} \right) \|\tilde{\mathbf{A}}(\mathbf{x}^{(k)}-\mathbf{x}_{\star})\|^2_2\\
     & +(2\beta^2+\beta)\|\tilde{\mathbf{A}}(\mathbf{x}^{(k-1)}-\mathbf{x}_{\star})\|^2_2.
    \end{aligned}
  \end{equation}
  Furthermore, based on Lemma \ref{lemma:4.1}, with probability $1-\delta$, we can attain
  \begin{equation}\label{inequality:4.5}
    \begin{aligned}
      &\sigma^2_{\min}(\tilde{\mathbf{A}})\geq(1-\epsilon)^2\sigma^2_{\min}(\mathbf{A}),\\
      &\sigma^2_{\max}(\tilde{\mathbf{A}}_{\tilde{\tau}_k})\leq (1+\epsilon)^2\sigma_{\max}(\mathbf{A}_{\tilde{\tau}_k}),\\
     & \|\tilde{\mathbf{A}}(\mathbf{x}^{(k)}-\mathbf{x}_{\star})\|^2_2\leq (1 + \epsilon)^2\|\mathbf{A}(\mathbf{x}^{(k)}-\mathbf{x}_{\star})\|^2_2,\\
      &\|\tilde{\mathbf{A}}(\mathbf{x}^{(k-1)}-\mathbf{x}_{\star})\|^2_2\leq (1 + \epsilon)^2\|\mathbf{A}(\mathbf{x}^{(k-1)}-\mathbf{x}_{\star})\|^2_2,\\
     & (1-\epsilon)^2\|\mathbf{x}^{(k+1)}-\mathbf{x}_{\star}\|^2_{\mathbf{A}^T\mathbf{A}} \leq  \|\mathbf{x}^{(k+1)} - \mathbf{x}_{\star}\|_{\tilde{\mathbf{A}}^T\tilde{\mathbf{A}}}^2.
    \end{aligned}
  \end{equation}
  Thus, substituting the above inequalities \eqref{inequality:4.5} into \eqref{equ:4.4}, with probability $1-\delta$, we obtain that
  \begin{equation}\label{inequ:4.17}
    \begin{aligned}
      \|\mathbf{x}^{(k+1)}-\mathbf{x}_{\star}\|^2_{\mathbf{A}^T\mathbf{A}} &\leq \frac{1}{(1-\epsilon)^2}\|\mathbf{x}^{(k+1)} - \mathbf{x}_{\star}\|_{\tilde{\mathbf{A}}^T\tilde{\mathbf{A}}}^2\\
      & \leq\left[(1 + 3\beta + 2\beta^2)\frac{(1+\epsilon)^2}{(1-\epsilon)^2}  - (3\beta+1)\frac{|\tilde{\tau}_k|\sigma^2_{\min}(\mathbf{A})}{n \sigma^2_{\max}(\mathbf{A}_{\tilde{\tau}_k})} \right] \|\mathbf{A}(\mathbf{x}^{(k)}-\mathbf{x}_{\star})\|^2_2\\
      & +\frac{(1+\epsilon)^2}{(1-\epsilon)^2}(2\beta^2+\beta)\|\mathbf{A}(\mathbf{x}^{(k-1)}-\mathbf{x}_{\star})\|^2_2\\
      & = \tilde{\gamma}_1\|\mathbf{A}(\mathbf{x}^{(k)}-\mathbf{x}_{\star})\|^2_2 + \tilde{\gamma}_2 \|\mathbf{A}(\mathbf{x}^{(k-1)}-\mathbf{x}_{\star})\|^2_2\\
      & = \tilde{\gamma}_1 \|\mathbf{x}^{(k)}-\mathbf{x}_{\star}\|^2_{\mathbf{A}^T \mathbf{A}} + \tilde{\gamma}_2 \|\mathbf{x}^{(k-1)}-\mathbf{x}_{\star}\|^2_{\mathbf{A}^T \mathbf{A}}.
    \end{aligned}
  \end{equation}
  Let $F^{(k)}:=\|\mathbf{x}^{(k)}-\mathbf{x}_{\star}\|^2_{\mathbf{A^T\mathbf{A}}}$, then \eqref{inequ:4.17} can be written as
  $$
  F^{(k+1)}\leq \tilde{\gamma}_1 F^{(k)} + \tilde{\gamma}_2 F^{(k-1)}.
  $$
  The range of values of $\beta$ shows that $\tilde{\gamma}_2 \geq 0$.  If $\tilde{\gamma}_2=0$, then it follows easily that $\beta = 0$ and $q=\tilde{\gamma}_1\geq 0$. Also because $\tilde{\gamma}_1 + \tilde{\gamma}_2<1$ holds by assumption, the conditions of Lemma \ref{lemma 3.2} are satisfied. Thus, inequality \eqref{equ:4.4} is then similarly obtained. Hence the iterative sequence $\left\{\mathbf{x}\right\}_{k=0}^{\infty}$ generated by the CS-mADBCD method converges to $\mathbf{x}_{\star}$.
\end{proof}
When $\beta=0$, similarly to Remark \ref{remark:3.3}, we have
$$
\|\mathbf{x}^{(k+1)}-\mathbf{x}_{\star}\|^2_{\mathbf{A}^T\mathbf{A}} \leq\left(\frac{(1+\epsilon)^2}{(1-\epsilon)^2}-\frac{|\tilde{\tau}_k|\sigma^2_{\min}(\mathbf{A})}{n \sigma^2_{\max}(\mathbf{A}_{\tilde{\tau}_k})} \right) \|\mathbf{x}^{(k)}-\mathbf{x}_{\star}\|^2_{\mathbf{A}^T \mathbf{A}}.
$$

\section{Numerical experiments}\label{sec:5}
In this section, we present a series of numerical examples to demonstrate the superiority of the mADBCD and CS-mADBCD methods compared to recently developed block coordinate descent methods, including the GBGS \cite{ELiZ}, MRBGS \cite{ELiJG} and FBCD \cite{EChHu} methods. The numerical performances of these five methods is evaluated based on the number of iteration steps (denoted by ``IT'') and CPU time in seconds (denoted by ``CPU'').  Here CPU and IT mean the arithmetical averages of the elapsed CPU times and the required iteration steps to run the corresponding methods $10$ times. All the numerical experiments are performed on a computer with Intel(R) Core(TM) i7-10510U processor with 1.83 GHz and 16GB RAM, under the Windows 10 operating systems using MATLAB R2020b with machine precision $10^{-16}$.
\par
In this experiment, we consider solving the following five different linear least-squares problems.
\begin{example}\label{example 5.1}
Linear least-squares problems with their coefficient matrices being fifty dense matrices generated by using the MATLAB function \textbf{randn}$(m,n)$ and the right-hand side vectors $b$ being set as $\mathbf{b}=\mathbf{A}\mathbf{x}_{\star}$, where the solution $\mathbf{x}_{\star}$ is randomly generated by the function \textbf{randn}$(n,1)$.
 \end{example}

\begin{example}\label{example 5.2}
Linear least-squares problems with their coefficient matrices being twenty sparse matrices from the Florida sparse matrix collection \cite{EDaHu} with their dimensions, densities and condition numbers listed in Table \ref{tab:5} and the right-hand side vectors $b$ being set as \ref{example 5.1}.
\end{example}

\begin{example}\label{example 5.3}
 Image reconstruction problems from the $2$D seismic travel-time tomography by using the function \textbf{seismictomo}$(N, s, p)$ in the MATLAB AIR Tools package \cite{EHaJo} with $N=50, s=80$ and $p=120$. That means a sparse matrix with 9600 rows and 2500 columns is generated. The right-hand side vectors $b$ are set as $\mathbf{b}=\mathbf{A}\mathbf{x}_{\star}$ with the true solutions $\mathbf{x}_{\star}$ shown as images in Figure \ref{fig:20} (top left).
\end{example}

\begin{example}\label{example 5.4}
X-ray CT reconstruction (the Shepp-Logan medical model) problems generated by function \textbf{fancurvedtomo}$(N, \theta, P)$ in the MATLAB package AIR Tools II \cite{EHaJo} with $N=50, \theta=0: 1: 300^{\circ} and P=50$, which generate a sparse matrix with 15050 rows and 2500 columns. A corresponding true solution $\mathbf{x}_{\star}$ is shown as images in Figure \ref{fig:21} (top left) and $\mathbf{b}=\mathbf{A}\mathbf{x}_{\star}$.
\end{example}

\begin{example}\label{example 5.5}
Linear least-squares problems with their coefficient matrices being thirty sparse matrices generated by using the MATLAB function \textbf{sprandn}$(m,n,s)$ and the right-hand side vectors $b$ being set as \ref{example 5.1}. Here, $m$, $n$ and $s$ denote the numbers of row, column and density of the generated matrices, respectively.
\end{example}
All methods start with an initial vector $\mathbf{x}^{(0)}=\mathbf{0}$. The termination criterion for Examples \ref{example 5.1}, \ref{example 5.2} and \ref{example 5.5} is that the relative solution error (RSE) at the approximate solution $\mathbf{x}^{(k)}$ satisfies
\begin{equation}\label{termination}
  \text{RSE}=\frac{\|\mathbf{x}^{(k)}-\mathbf{x}_{\star}\|^2_2}{\|\mathbf{x}_{\star}\|^2_2}<10^{(-6)};
\end{equation}
while for the examples of image reconstruction (Examples \ref{example 5.3} and \ref{example 5.4}), it is based on the CPU time reaching 180 seconds.
\par
Both the GBGS and MRBGS methods require the multiplication of the Moore-Penrose pseudoinverse $\mathbf{A}_{\tau_k}^{\dagger}$ of matrix $\mathbf{A}_{\tau_k}$ with the vector $\mathbf{r}^{(k)}=\mathbf{b}-\mathbf{A}\mathbf{x}^{(k)}$ for each iteration. Therefore, to reduce computational loads, we utilize the MATLAB function \textbf{lsqminnorm}$(\mathbf{A}_{\tau_k}, \mathbf{r}^{(k)})$ to compute  $\mathbf{A}_{\tau_k}^{\dagger}\mathbf{r}^{(k)}$, instead of using the MATLAB function \textbf{pinv}$(\mathbf{A}_{\tau_k})$ to compute  $\mathbf{A}_{\tau_k}^{\dagger}$and then applied to $\mathbf{r}^{(k)}$. For the MRBGS method, the block control index set is determined by
$$
\tau_k=\left\{ j_k|| (\mathbf{s}^{(k)})_{(j_k)}|^2 \geq 0.3 \max _{1 \leq j \leq n}|s_k^{(j)}|^2\right\},
$$
similar to numerical experiments in \cite{ELiJG}. The value of the parameter $\theta$ in the GBGS method is chosen as in \cite{ELiZ}.
Through numerical experiments, we have observed that when the momentum parameter $\beta$ in the mADBCD method exceeds 0.9, it tends to diverge for the majority of numerical examples. Therefore, we will implement the mADBCD method with $\beta$ falling within the interval $[0, 0.9]$. The relatively optimal momentum parameters $\beta$ and $\beta_{CS}$ in Tables \ref{tab:10}-\ref{tab:13} are experimentally found by minimizing the numbers of iteration steps during the implementation of the mADBCD and CS-mADBCD methods, respectively.

\subsection{Comparison of the mADBCD method with the GBGS, MRBGS and FBCD methods}
In this subsection, we compare the numerical performances of the GBGS, MRBGS, FBCD and mADBCD methods on Examples \ref{example 5.1}-\ref{example 5.4}, see Tables \ref{tab:1}-\ref{tab:4} and \ref{tab:6}-\ref{tab:9}. To better compare the convergence speeds of these four methods conveniently, we also present the speed-up of the mADBCD method against the other methods, defined by
$$
\text { speed-up{\_}Method }=\frac{\text { CPU of Method }}{\text { CPU of mADBCD }}.
$$

\begin{figure}[!htbp]
  \renewcommand\figurename{Figure}
    \centering
  \begin{minipage}[t]{0.45\linewidth}
    \centering
    \includegraphics[scale=0.40]{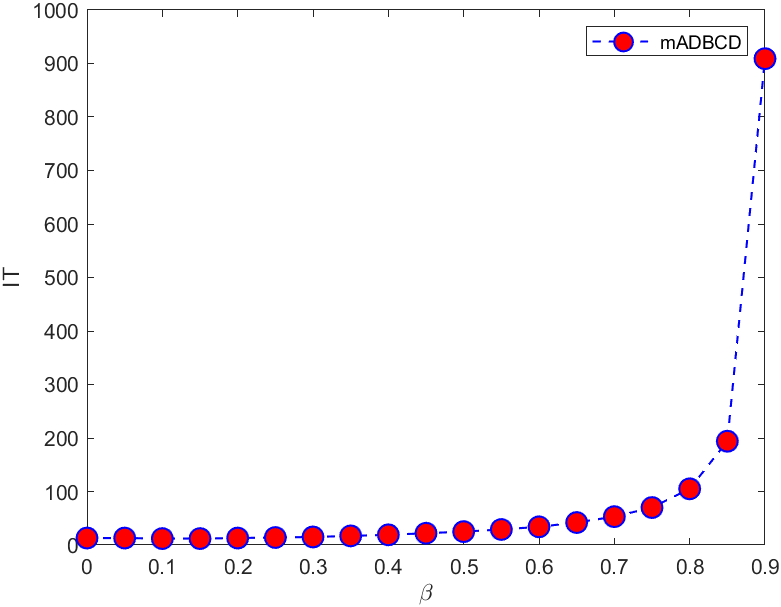}
  \end{minipage}
  \begin{minipage}[t]{0.45\linewidth}
    \centering
    \includegraphics[scale=0.40]{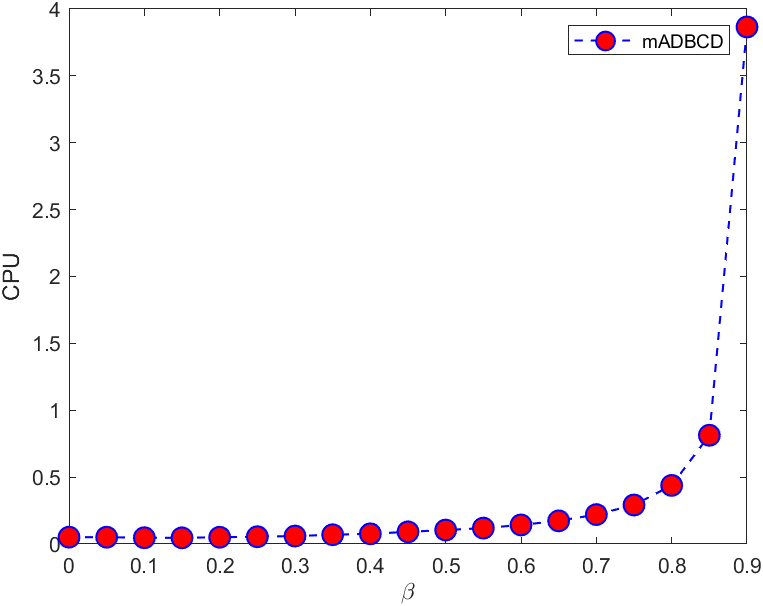}
  \end{minipage}
  \vspace{0.0cm}
  \caption{Pictures of $\beta$ versus IT (left) and CPU (right) for mADBCD when $\mathbf{A}$=\textbf{randn}$(7500,750)$.}
  \label{fig:1}
\end{figure}

Figures \ref{fig:1}-\ref{fig:5} illustrate the impact of varying momentum parameter $\beta$ on the iteration count and CPU time for the mADBCD method. From these five figures, we can deduce that when the ratio of rows $m$ to columns $n$ in the coefficient matrix is greater than or equal to $5$, as shown in Figures \ref{fig:1}-\ref{fig:2}, the mADBCD method exhibits minimal sensitivity to the selection of the momentum parameter $\beta$ when $\beta\leq 0.5$. Moreover, the optimal value of $\beta$ that minimizes the iteration count for this method typically falls within the range $[0, 0.3]$. This can also be inferred from the values of $\beta$ in Tables \ref{tab:1}-\ref{tab:2}. However, when $0.55\leq \beta \leq 0.9$, the iteration count and CPU time of the mADBCD method tend to increase as $\beta$ increases. When the ratio of $m$ to $n$ is less than or equal to $0.5$, as shown in Figures \ref{fig:3}-\ref{fig:5}, it can be observed that the choice of the momentum parameter $\beta$ significantly influences the numerical performances of the mADBCD method. In this case, as the value of $\beta$ increases, the number of iterations and CPU time of the method tend to decrease gradually and then increase. For solving such least-squares problems, the preferable range for the momentum parameter $\beta$ in the mADBCD method is between 0.5 and 0.65. This aspect is clearly demonstrated in Tables \ref{tab:3}-\ref{tab:4}.

\begin{figure}[!htbp]
  \renewcommand\figurename{Figure}
    \centering
  \begin{minipage}[t]{0.45\linewidth}
    \centering
    \includegraphics[scale=0.40]{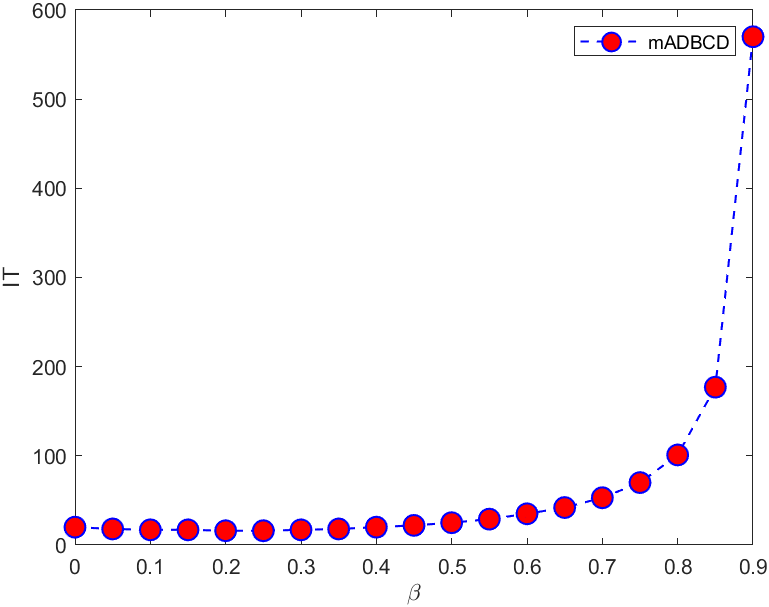}
  \end{minipage}
  \begin{minipage}[t]{0.45\linewidth}
    \centering
    \includegraphics[scale=0.40]{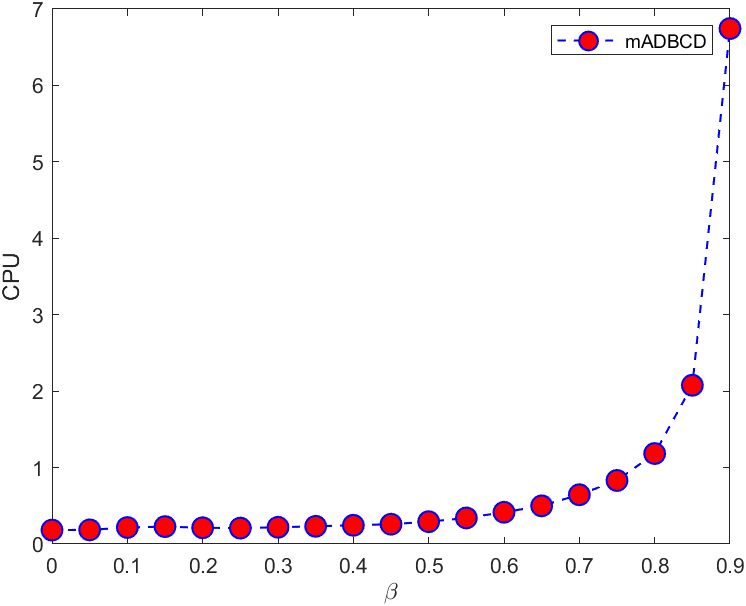}
  \end{minipage}
  \vspace{0.0cm}
  \caption{Pictures of $\beta$ versus IT (left) and CPU (right) for mADBCD when $\mathbf{A}$=\textbf{randn}$(7500,1500)$.}
  \label{fig:2}
\end{figure}
\begin{figure}[!htbp]
  \renewcommand\figurename{Figure}
    \centering
  \begin{minipage}[t]{0.45\linewidth}
    \centering
    \includegraphics[scale=0.40]{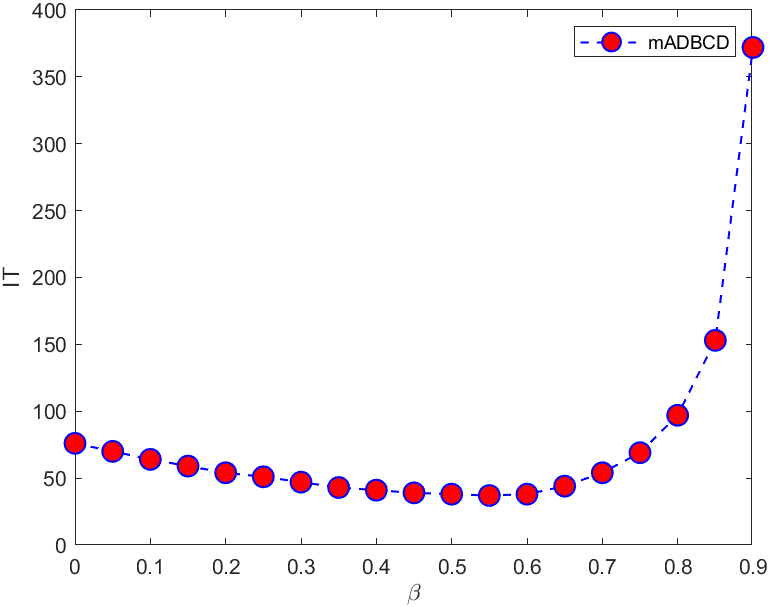}
  \end{minipage}
  \begin{minipage}[t]{0.45\linewidth}
    \centering
    \includegraphics[scale=0.40]{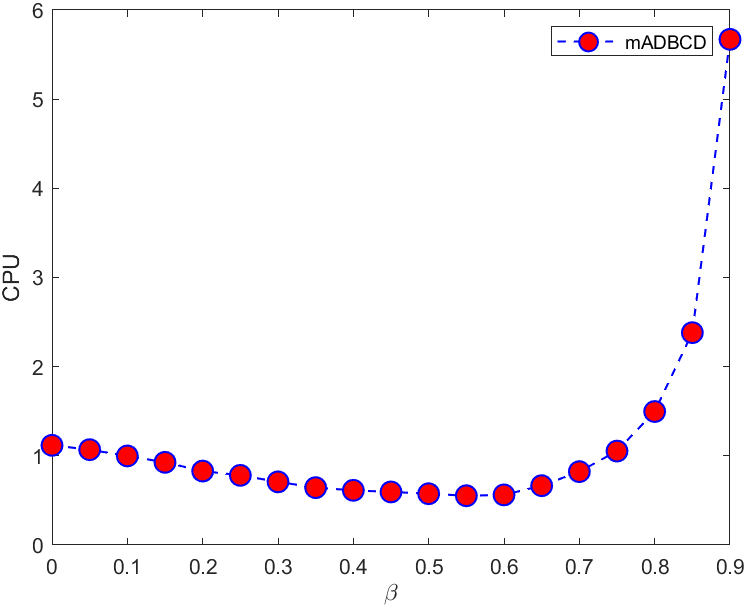}
  \end{minipage}
  \vspace{0.0cm}
  \caption{Pictures of $\beta$ versus IT (left) and CPU (right) for mADBCD when $\mathbf{A}$=\textbf{randn}$(6000,3000)$.}
  \label{fig:3}
\end{figure}
\begin{figure}[!htbp]
  \renewcommand\figurename{Figure}
    \centering
  \begin{minipage}[t]{0.45\linewidth}
    \centering
    \includegraphics[scale=0.40]{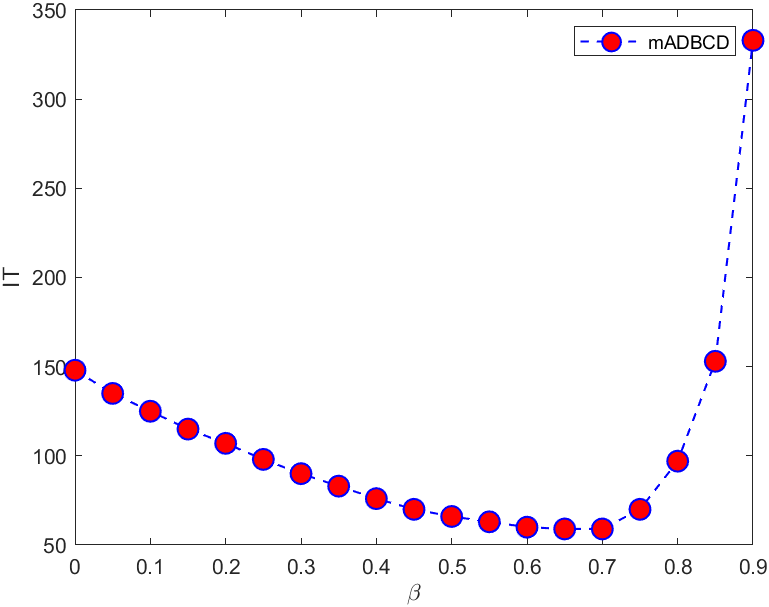}
  \end{minipage}
  \begin{minipage}[t]{0.45\linewidth}
    \centering
    \includegraphics[scale=0.40]{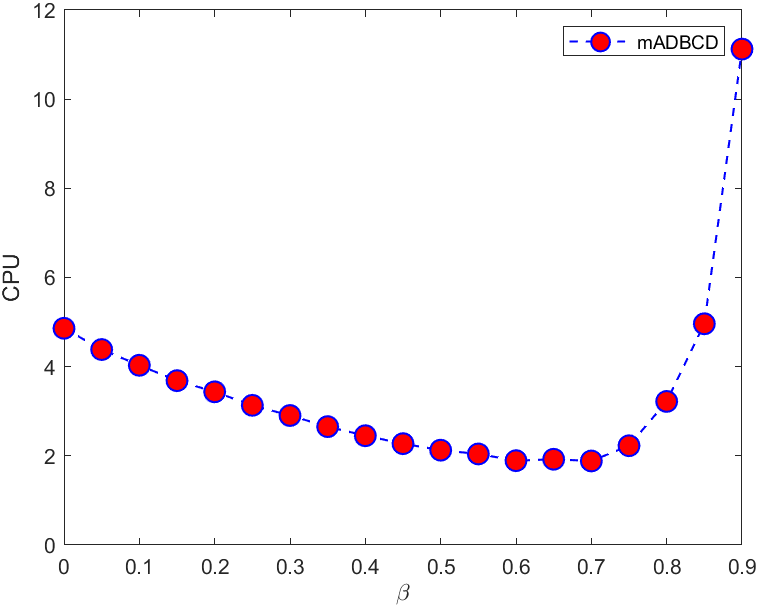}
  \end{minipage}
  \vspace{0.0cm}
  \caption{Pictures of $\beta$ versus IT (left) and CPU (right) for mADBCD when $\mathbf{A}$=\textbf{randn}$(8000,5000)$.}
  \label{fig:4}
\end{figure}
\begin{figure}[!htbp]
  \renewcommand\figurename{Figure}
    \centering
  \begin{minipage}[t]{0.45\linewidth}
    \centering
    \includegraphics[scale=0.40]{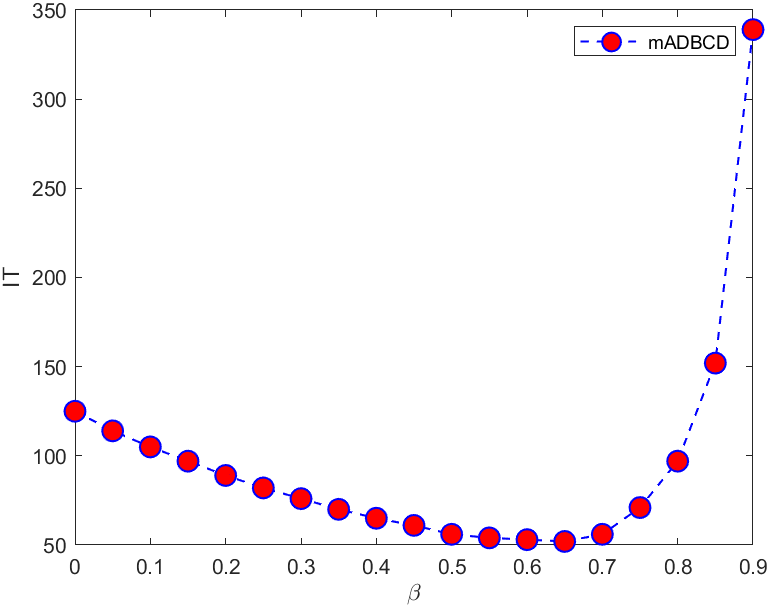}
  \end{minipage}
  \begin{minipage}[t]{0.45\linewidth}
    \centering
    \includegraphics[scale=0.40]{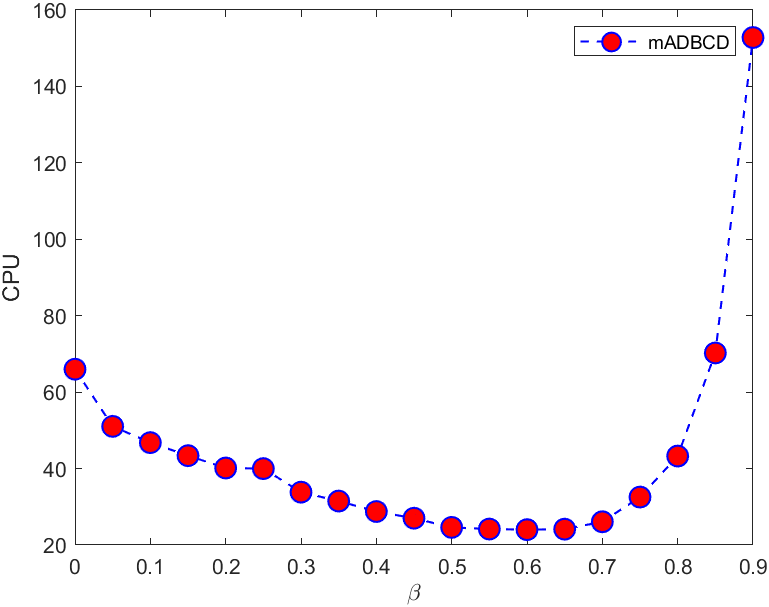}
  \end{minipage}
  \vspace{0.0cm}
  \caption{Pictures of $\beta$ versus IT (left) and CPU (right) for mADBCD when $\mathbf{A}$=\textbf{randn}$(21000,12500)$.}
  \label{fig:5}
\end{figure}

\begin{table}[!htbp]
  \centering
  \caption{IT and CPU of GBGS, MRBGS, FBCD and mADBCD for different $\mathbf{A}$=\textbf{randn}$(m,n)$.}\label{tab:1}%
  \begin{tabular*}{\hsize}{@{}@{\extracolsep{\fill}}lllllll@{}}
  \toprule
  $m$ & & $3500$ & $4500$ & $5500$ & $6500$ & $7500$ \\
  $n$ & & $350$ & $450$ & $550$ & $650$ & $750$ \\
  \midrule
  GBGS & IT & $47$ & $51$ & $49$ & $54$ & $52$ \\
  & CPU & $0.0896$ & $0.1476$ & $ 0.2191$ & $0.5432$ & $0.6154$ \\
  MRBGS& IT & $20$ & $22$ & $22$ & $22$ & $22$ \\
  & CPU & $0.0614$ & $0.1297$ & $0.1909$ & $0.3065$ & $0.4079$ \\
  FBCD & IT & $48$ & $53$ & $53$ & $53$ & $52$ \\
  & CPU & $ 0.0275$ & $0.0742$ & $ 0.1189$ & $0.1901$ & $0.3318$ \\
  mADBCD & $\beta$ & $0.10$ & $0.20$ & $0.10$ & $0.15$ & $0.15$ \\
  & IT & $12$ & $13$ & $12$ & $12$ & $12$ \\
  & CPU & $0.0055$ & $0.0163$ & $0.0246$ & $ 0.0351$ & $0.0486$ \\
  speed-up{\_GBGS}& & $ 16.29$ & $9.06$ & $8.91$ & $15.47$ & $12.66$  \\
  speed-up{\_MRBGS}& & $ 11.16$ & $7.96$ & $7.76$ & $8.73$ & $8.39$  \\
  speed-up{\_FBCD}& & $ 5.00$ & $4.55$ & $4.83$ & $5.42$ & $6.83$  \\
  \bottomrule
  \end{tabular*}
\end{table}

\begin{table}[!htbp]
  \centering
  \caption{IT and CPU of GBGS, MRBGS, FBCD and mADBCD for different $\mathbf{A}$=\textbf{randn}$(m,n)$.}\label{tab:2}%
  \begin{tabular*}{\hsize}{@{}@{\extracolsep{\fill}}lllllll@{}}
  \toprule
  $m$ & & $3500$ & $4500$ & $5500$ & $6500$ & $7500$ \\
  $n$ & & $700$ & $900$ & $1100$ & $1300$ & $1500$ \\
  \midrule
  GBGS & IT & $84$ & $89$ & $99$ & $94$ & $99$ \\
  & CPU & $0.4387$ & $0.5182$ & $1.2411$ & $2.2123$ & $3.3781$ \\
  MRBGS & IT & $32$ & $34$ & $36$ & $35$ & $37$ \\
  & CPU & $0.3575$ & $0.4770$ & $0.7900$ & $1.9296$ & $2.6369$ \\
  FBCD  & IT & $85$ & $96$ & $97$ & $95$ & $100$ \\
  & CPU & $0.1582$ & $0.3426$ & $0.5269$ & $ 1.0664$ & $1.3669$ \\
  mADBCD & $\beta$ & $0.25$ & $0.25$ & $0.25$ & $0.30$ & $0.25$ \\
  & IT & $16$ & $16$ & $16$ & $16$ & $16$ \\
  & CPU & $ 0.0256 $ & $0.0480$ & $0.0744$ & $0.1042$ & $  0.1436$ \\
  speed-up{\_GBGS}& & $ 17.14$ & $10.78$ & $16.68$ & $21.23$ & $23.52  $  \\
  speed-up{\_MRBGS}& & $13.96$ & $9.94$ & $10.62$ & $18.52$ & $ 18.36$  \\
  speed-up{\_FBCD}& & $6.18$ & $ 7.14$ & $7.08$ & $10.23$ & $9.52$  \\
  \bottomrule
  \end{tabular*}
\end{table}

\begin{table}[!htbp]
  \centering
  \caption{IT and CPU of GBGS, MRBGS, FBCD and mADBCD for different $\mathbf{A}$=\textbf{randn}$(m,n)$.}\label{tab:3}%
  \begin{tabular*}{\hsize}{@{}@{\extracolsep{\fill}}lllllll@{}}
  \toprule
  $m$ & & $4000$ & $5000$ & $6000$ & $7000$ & $8000$ \\
  $n$ & & $1000$ & $2000$ & $3000$ & $4000$ & $5000$ \\
  \midrule
  GBGS & IT & $112$ & $ 283$ & $508$ & $ 768$ & $1164$ \\
  & CPU & $0.5404$ & $5.6765$ & $12.4093$ & $59.3751$ & $141.3525$ \\
  MRBGS& IT & $42$ & $78$ & $ 129$ &  $190$ & $ 277$ \\
  & CPU & $0.4224$ & $3.6299$ & $23.7199$ & $100.0796$ & $277.5756$ \\
  FBCD & IT & $122$ & $285$ & $505$ & $783$ & $ 1166 $ \\
  & CPU & $0.3635$ & $ 2.2697$ & $7.4719$ & $36.4584$ & $ 79.3965$ \\
  mADBCD & $\beta$ & $0.30$ & $0.45$ & $0.55$ & $0.65$ & $0.65$ \\
  & IT & $18$ & $27$ & $37$ & $47$ & $59$ \\
  & CPU & $0.0516$ & $0.2069$ & $0.5545$ & $1.0630$ & $1.9208 $ \\
  speed-up{\_GBGS}& & $10.47$ & $ 27.44$ & $22.38$ & $55.86$ & $73.59$  \\
  speed-up{\_MRBGS}& & $8.19$ & $ 17.54$ & $42.78$ & $94.15$ & $ 144.51$  \\
  speed-up{\_FBCD}& & $ 7.04$ & $10.97$ & $ 13.48$ & $34.30$ & $ 41.34$  \\
  \bottomrule
  \end{tabular*}
\end{table}

\begin{table}[!htbp]
  \centering
  \caption{IT and CPU of GBGS, MRBGS, FBCD and mADBCD for different $\mathbf{A}$=\textbf{randn}$(m,n)$.}\label{tab:4}%
  \begin{tabular*}{\hsize}{@{}@{\extracolsep{\fill}}lllllll@{}}
  \toprule
  $m$ & & $10000$ & $13000$ & $16000$ & $19000$ & $21000$ \\
  $n$ & & $5000$ & $6500$ & $8500$ & $10500$ & $12500$ \\
  \midrule
  GBGS & IT & $ 554$ & $597$ & $738$ & $835$ & $ 1110$ \\
  & CPU & $ 71.5051$ & $163.6368$ & $265.7505$ & $403.0384$ & $1045.7000$ \\
  MRBGS& IT & $142$ & $153$ & $180$ & $198$ & $255$ \\
  & CPU & $140.5002$ & $ 358.1978$ & $608.0710$ & $1083.0000$ & $2409.0000$ \\
  FBCD & IT & $ 552$ & $613$ & $756$ & $852$ & $ 1103$ \\
  & CPU & $28.5252$ & $83.6194$ & $154.0855$ & $ 213.7058$ & $437.9306$ \\
  mADBCD & $\beta$ & $0.50$ & $0.55$ & $0.60$ & $0.50$ & $0.65$ \\
  & IT & $37$ & $37$ & $41$ & $44$ & $52$ \\
  & CPU & $1.6140$ & $2.6188$ & $5.0988$ & $7.5382$ & $12.7072$ \\
  speed-up{\_GBGS}& & $ 44.30$ & $62.49$ & $ 52.12$ & $53.47$ & $ 82.29$  \\
  speed-up{\_MRBGS}& & $87.05$ & $136.78$ & $119.26$ & $ 143.67$ & $189.58$  \\
  speed-up{\_FBCD}& & $17.67$ & $31.93$ & $ 30.22$ & $28.35$ & $ 34.46$  \\
  \bottomrule
  \end{tabular*}
\end{table}

From Tables \ref{tab:1}-\ref{tab:4}, it can be observed that for solving the 20 linear least-squares problems, all the block iterative GBGS, MRBGS, FBCD and mADBCD methods are convergent. When an appropriate momentum parameter $\beta$ is chosen, the numerical performance of the mADBCD method is optimal, as indicated by the iteration counts and CPU times. The convergence speed of the FBCD method is faster than that of GBGS and MRBGS methods. Additionally, when $m/n\leq 0.5$, the GBGS method outperforms the MRBGS method, see Tables \ref{tab:3} and \ref{tab:4}. Furthermore, the ranges of speed-up{\_GBGS},  speed-up{\_MRBGS} and speed-up{\_FBCD} are distributed between 8.91 and 82.29, 7.76 and 189.58, 4.55 and 34.46, respectively. These data further reflect that the best numerical performance among these four block coordinate descent type methods is the mADBCD method, followed by the FBCD method.
\par
In Figures \ref{fig:6}-\ref{fig:10}, we present the curves depicting the relative solution error (RSE) against the number of iterations (left) and CPU time (right) for the GBGS, MRBGS, FBCD and mADBCD methods. From these five figures, we can observe that the experimental phenomena are almost the same as those shown in Tables \ref{tab:1}-\ref{tab:4}, i.e., the mADBCD method has the least number of iterations and CPU time when the iterations satisfy the termination criterion \eqref{termination}.
\par
Figures \ref{fig:11}-\ref{fig:15} illustrate the significant impact of different values of the momentum parameter $\beta$ on the efficiency of solving sparse linear least-squares problems from Example \ref{example 5.2} using the mADBCD method. We have also realized that choosing a suitably appropriate parameter $\beta$ can significantly enhance the convergence speed of the mADBCD method and optimize its numerical performance.
\par
For solving Example \ref{example 5.2}, it can be concluded that the most competitive numerical performance is the mADBCD method, followed by the FBCD method, and the least effective is the GBGS method according to Tables \ref{tab:6}-\ref{tab:10}. From Tables \ref{tab:6}-\ref{tab:9}, it can be seen that although the numbers of iterations of the GBGS and MRBGS methods are less than that of the FBCD method, the FBCD method is more efficient in terms of CPU times, which is similarly reflected in the experimental phenomena in Tables \ref{tab:1}-\ref{tab:4}. Moreover, the ranges of speed-up{\_GBGS}, speed-up{\_MRBGS} and speed-up{\_FBCD} are distributed between 9.80 and 668.03, 5.80 and 546.07, 2.60 and 81.86, respectively, which further emphasizes the numerical competitiveness of the mADBCD method for solving linear least-squares problems.
\par
For the image reconstruction problems in Examples \ref{example 5.3} and \ref{example 5.4}, we run the GBGS, MRBGS, FBCD, and mADBCD methods for 180 seconds and present their numerical results in Figures \ref{fig:20} and \ref{fig:21}, respectively. From these two figures, we can conclude that, in the same CPU time, the efficiency of image reconstruction is best for the mADBCD method, followed by the FBCD method, and the worst is the GBGS method, which further indicates the numerical superiority of the mADBCD method.
\par
Whether the coefficient matrix of problem \eqref{linear least} is dense or sparse, the mADBCD method demonstrates the best numerical performance when an appropriate relaxation parameter is selected. In conclusion, the above numerical experiments indicate that this method is the fastest and most efficient compared to recent block coordinate descent methods for solving problem \eqref{linear least}.

\begin{figure}[!htbp]
  \renewcommand\figurename{Figure}
    \centering
  \begin{minipage}[t]{0.45\linewidth}
    \centering
    \includegraphics[scale=0.40]{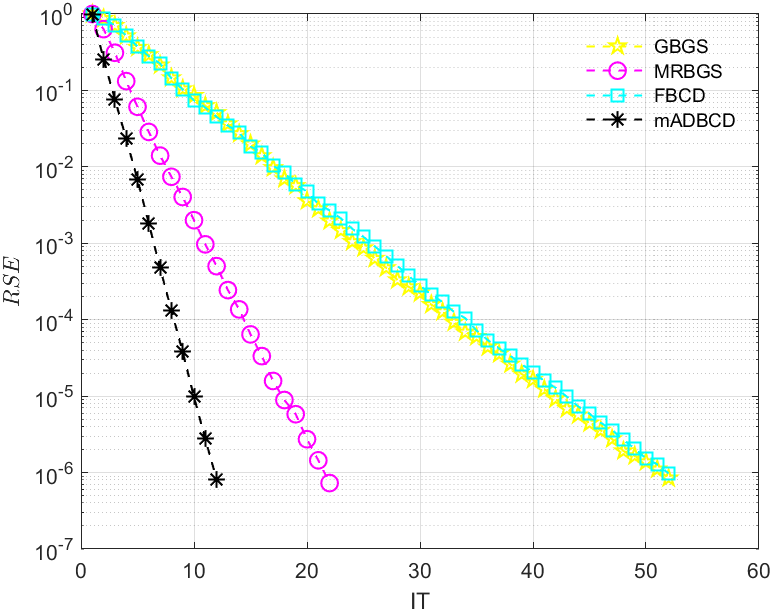}
  \end{minipage}
  \begin{minipage}[t]{0.45\linewidth}
    \centering
    \includegraphics[scale=0.40]{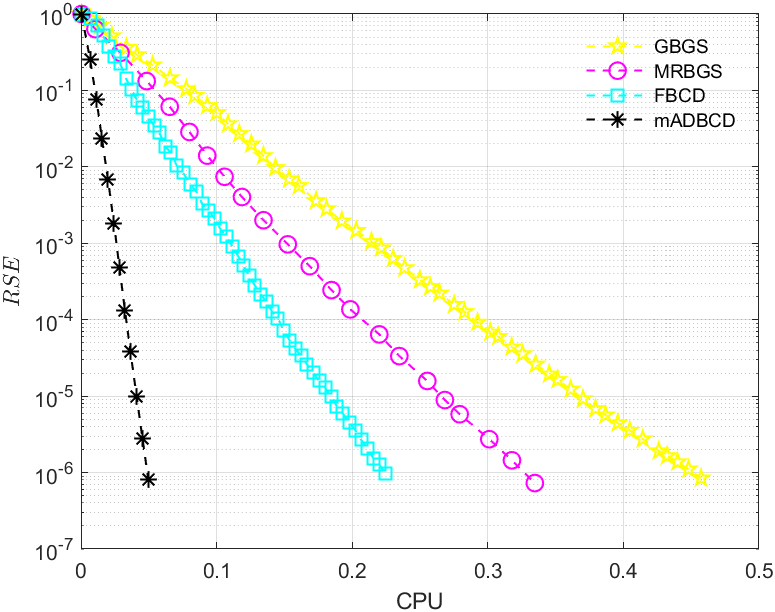}
  \end{minipage}
  \vspace{0.0cm}
  \caption{RSE versus IT (left) and CPU (right) of GBGS, MRBGS, FBCD and mADBCD for coefficient matrix $\mathbf{A}$=\textbf{randn}$(7500,750)$.}
  \label{fig:6}
\end{figure}
\begin{figure}[!htbp]
  \renewcommand\figurename{Figure}
    \centering
  \begin{minipage}[t]{0.45\linewidth}
    \centering
    \includegraphics[scale=0.40]{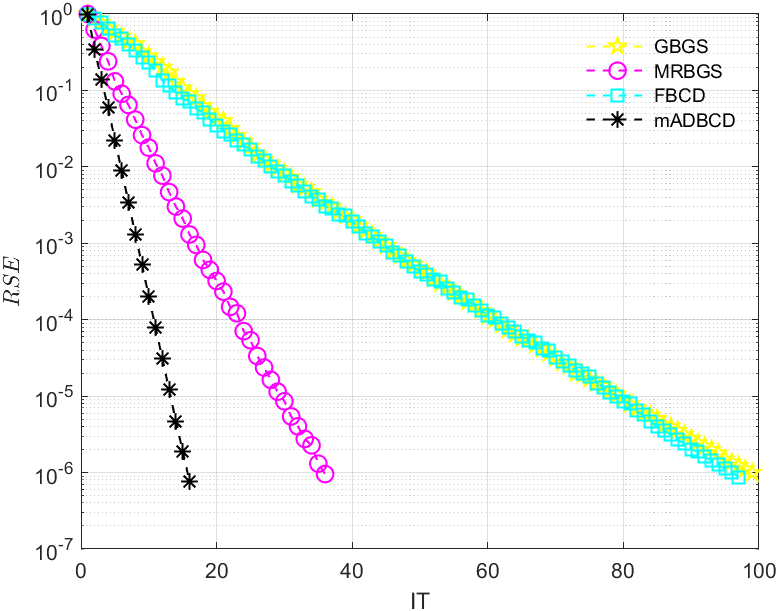}
  \end{minipage}
  \begin{minipage}[t]{0.45\linewidth}
    \centering
    \includegraphics[scale=0.40]{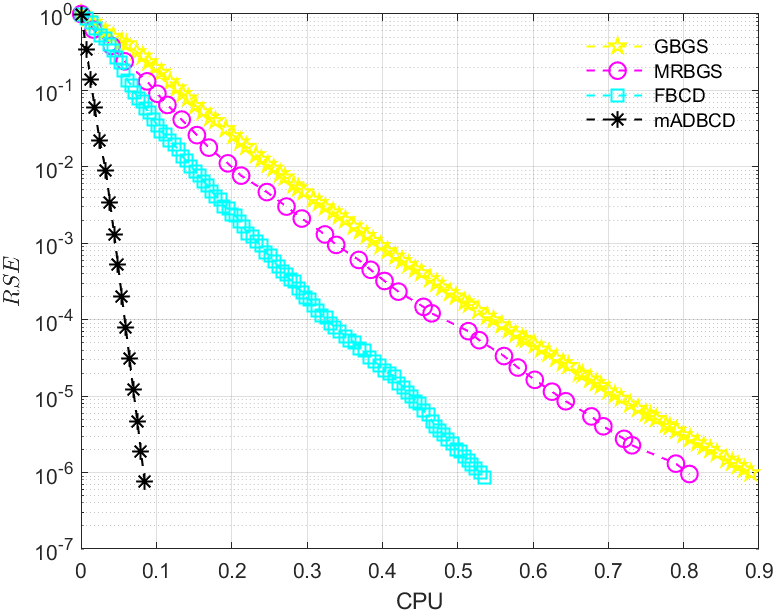}
  \end{minipage}
  \vspace{0.0cm}
  \caption{RSE versus IT (left) and CPU (right) of GBGS, MRBGS, FBCD and mADBCD for coefficient matrix $\mathbf{A}$=\textbf{randn}$(5500,1100)$.}
  \label{fig:7}
\end{figure}
\begin{figure}[!htbp]
  \renewcommand\figurename{Figure}
    \centering
  \begin{minipage}[t]{0.45\linewidth}
    \centering
    \includegraphics[scale=0.40]{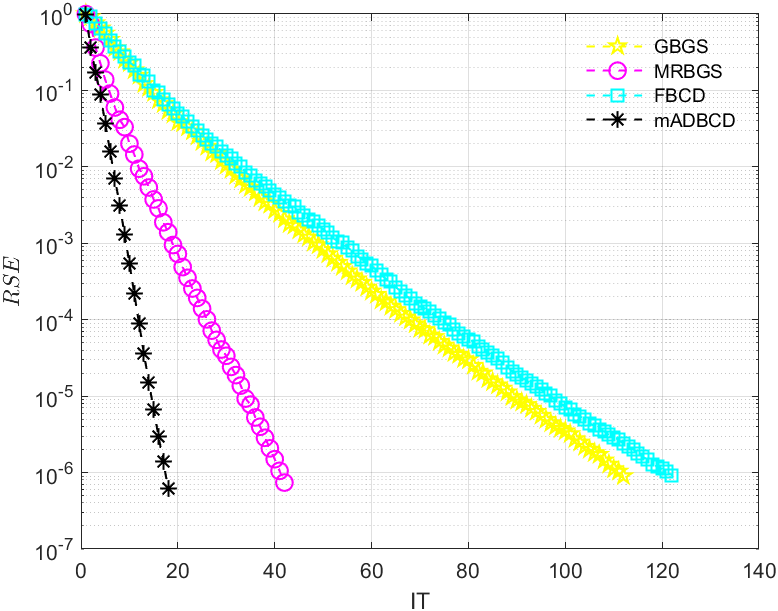}
  \end{minipage}
  \begin{minipage}[t]{0.45\linewidth}
    \centering
    \includegraphics[scale=0.40]{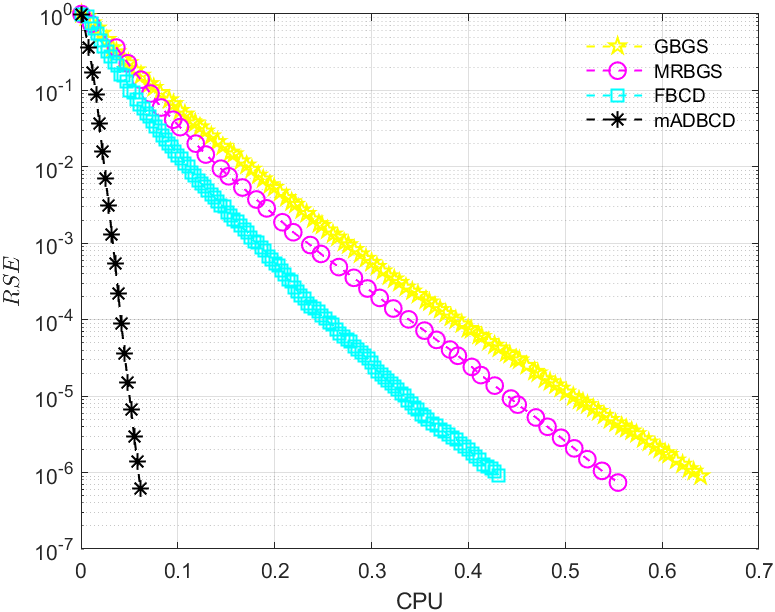}
  \end{minipage}
  \vspace{0.0cm}
  \caption{RSE versus IT (left) and CPU (right) of GBGS, MRBGS, FBCD and mADBCD for coefficient matrix $\mathbf{A}$=\textbf{randn}$(4000,1000)$.}
  \label{fig:8}
\end{figure}
\begin{figure}[!htbp]
  \renewcommand\figurename{Figure}
    \centering
  \begin{minipage}[t]{0.45\linewidth}
    \centering
    \includegraphics[scale=0.40]{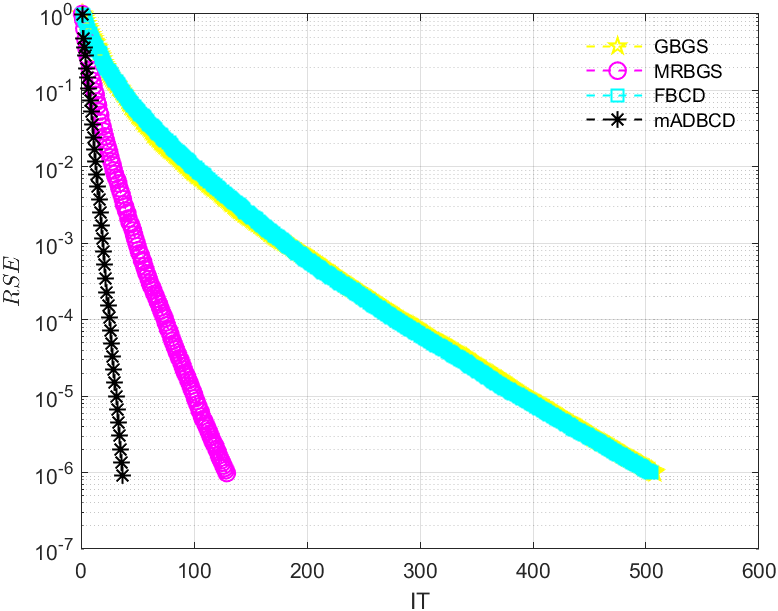}
  \end{minipage}
  \begin{minipage}[t]{0.45\linewidth}
    \centering
    \includegraphics[scale=0.40]{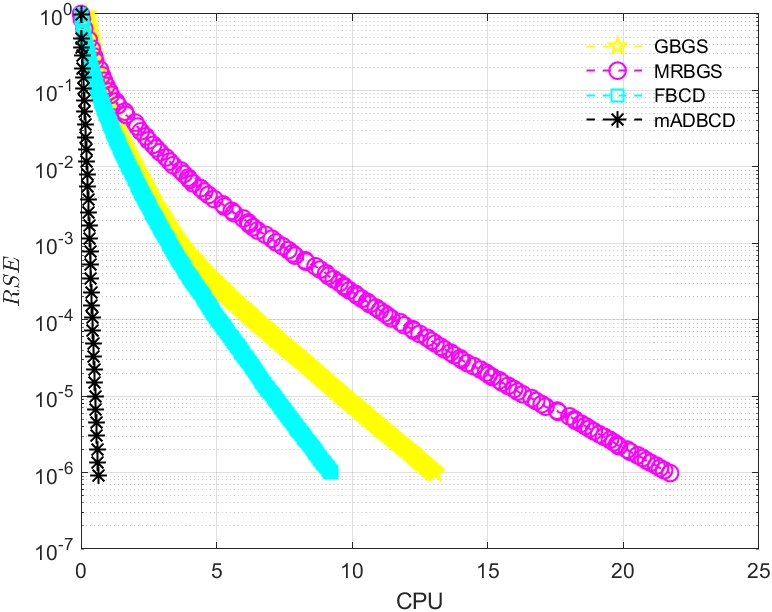}
  \end{minipage}
  \vspace{0.0cm}
  \caption{RSE versus IT (left) and CPU (right) of GBGS, MRBGS, FBCD and mADBCD for coefficient matrix $\mathbf{A}$=\textbf{randn}$(6000,3000)$.}
  \label{fig:9}
\end{figure}

\begin{figure}[!htbp]
  \renewcommand\figurename{Figure}
    \centering
  \begin{minipage}[t]{0.45\linewidth}
    \centering
    \includegraphics[scale=0.40]{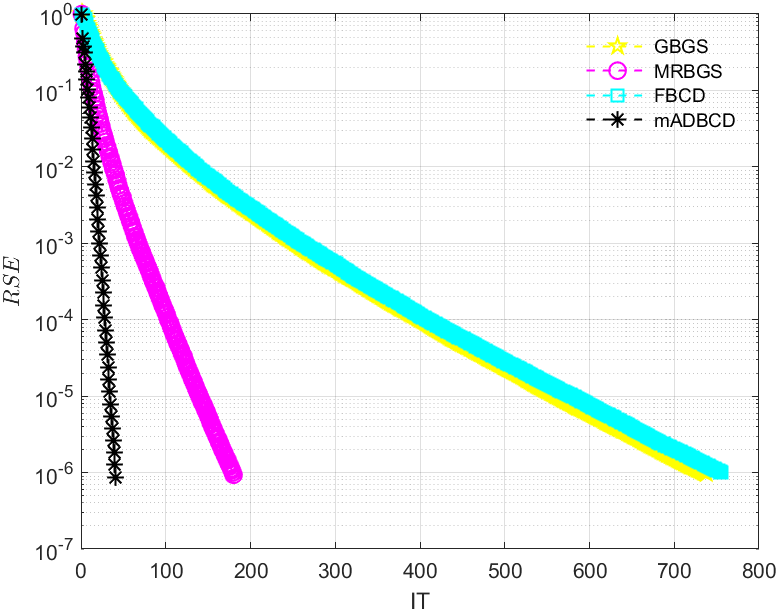}
  \end{minipage}
  \begin{minipage}[t]{0.45\linewidth}
    \centering
    \includegraphics[scale=0.40]{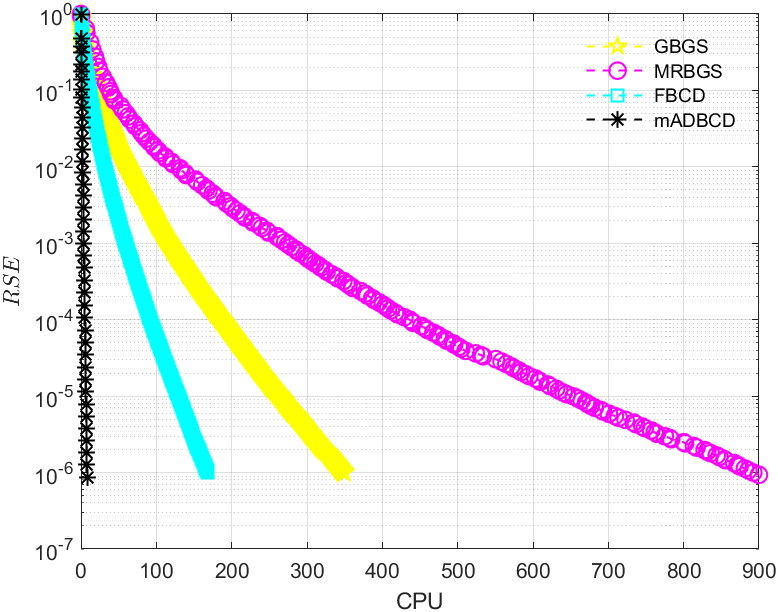}
  \end{minipage}
  \vspace{0.0cm}
  \caption{RSE versus IT (left) and CPU (right) of GBGS, MRBGS, FBCD and mADBCD for coefficient matrix $\mathbf{A}$=\textbf{randn}$(16000,8500)$.}
  \label{fig:10}
\end{figure}
\begin{table}[!htbp]
  \centering
  \caption{Information of the matrices from Florida space matrix collection.}\label{tab:5}%
  \begin{tabular*}{\hsize}{@{}@{\extracolsep{\fill}}lllllll@{}}
  \toprule
  name & & ash958 & abtaha1 & abtaha2 &  ash608 & WorldCities \\
  $m \times n$  & & $958 \times 292$ & $14596 \times 209$ & $37932 \times 331$ & $608 \times 188$ & $315 \times 100$ \\
  density & & $0.68 \%$ & $1.68 \%$ & $1.09\%$ & $1.06 \%$ & $23.87\%$ \\
  cond$(\mathbf{A})$ & & $3.20$ & $12.23$ & $12.22$ & $3.37$ & $66.00$ \\
  \midrule
  name & & well1033 & well1850 & rail516 &  rail582 & r05 \\
  $m \times n$ & & $1033 \times 320$ & $1850 \times 712$ & $516 \times 47827$ & $582 \times 56097$ & $5190 \times 9690 $ \\
  density & & $1.43 \%$ & $0.66 \%$ & $1.28\%$ & $1.28 \%$ & $1.23\%$ \\
  cond$(\mathbf{A})$ & & $166.13$ & $11.13$ & $143.89$ & $185.91$ & $121.82$ \\
  \midrule
  name & & cage8 & cage9 & cage10 & nemsafm &lp22 \\
  $m \times n$ & & $1015 \times 1015$ & $5226 \times 14721$ & $11397 \times 11397$ & $334 \times 2348$ & $2958 \times 16392$ \\
  density & & $1.07\%$ & $0.33 \%$ & $0.16\%$ & $0.36\%$ & $0.14\%$ \\
  cond$(\mathbf{A})$ &  & $11.41$ & $12.60$ & $11.02$  & $4.77$ & $25.78$ \\
  \midrule
  name& & model1  & model8 & nemscem & P05 & pgp2  \\
  $m \times n$ & & $362 \times 798$ & $2896 \times 6464$ & $651 \times 1712$ & $5090 \times 9590$ & $4034\times 13254$ \\
  density & & $1.05 \%$ & $0.14 \%$ & $ 0.31\%$ & $0.12\%$ & $0.14\%$ \\
  cond$(\mathbf{A})$ & & $17.57$  & $53.63$ & $44.63$ & $85.37$ & $31.95$ \\
  \bottomrule
  \end{tabular*}
\end{table}

\begin{figure}[!htbp]
  \renewcommand\figurename{Figure}
    \centering
  \begin{minipage}[t]{0.45\linewidth}
    \centering
    \includegraphics[scale=0.40]{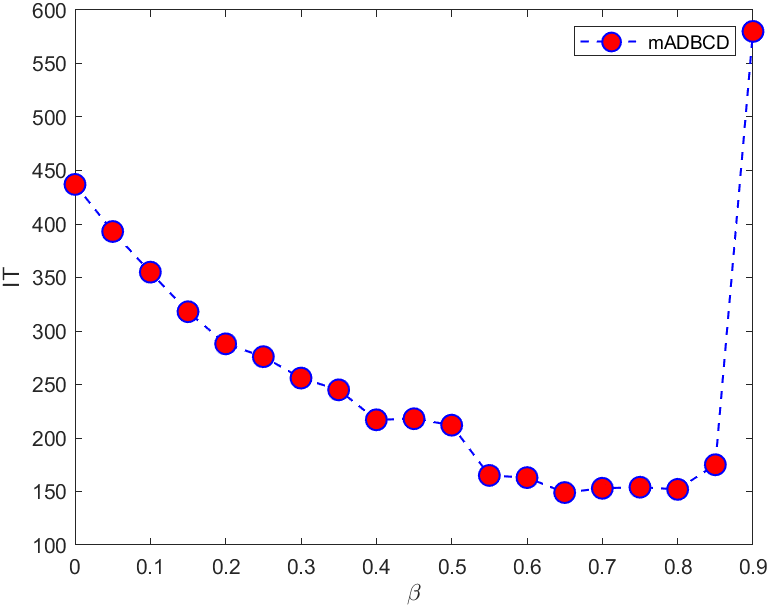}
  \end{minipage}
  \begin{minipage}[t]{0.45\linewidth}
    \centering
    \includegraphics[scale=0.40]{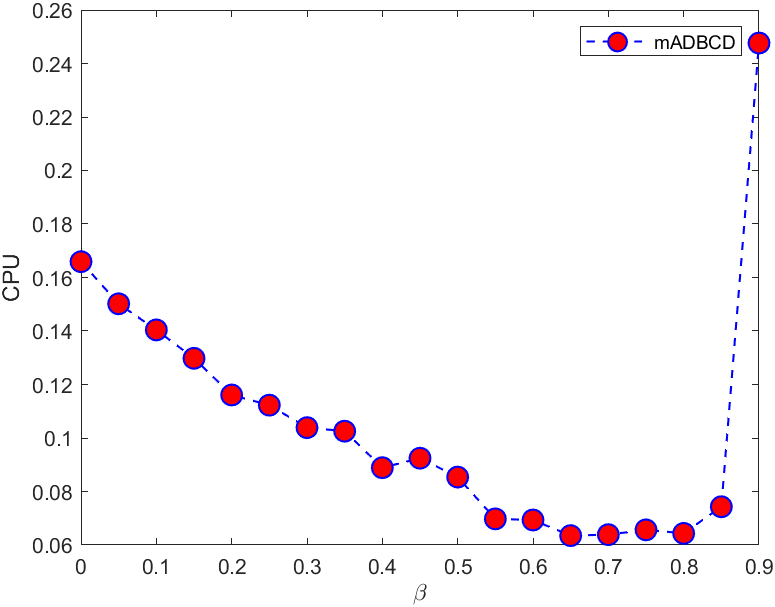}
  \end{minipage}
  \vspace{0.0cm}
  \caption{Pictures of $\beta$ versus IT (left) and CPU (right) for mADBCD when $\mathbf{A}$ is abtaha2.}
  \label{fig:11}
\end{figure}
\begin{figure}[!htbp]
  \renewcommand\figurename{Figure}
    \centering
  \begin{minipage}[t]{0.45\linewidth}
    \centering
    \includegraphics[scale=0.40]{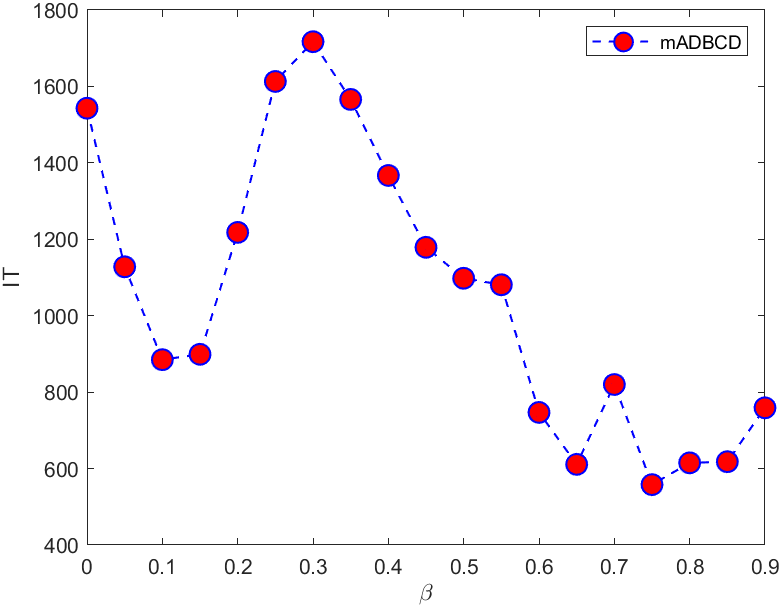}
  \end{minipage}
  \begin{minipage}[t]{0.45\linewidth}
    \centering
    \includegraphics[scale=0.40]{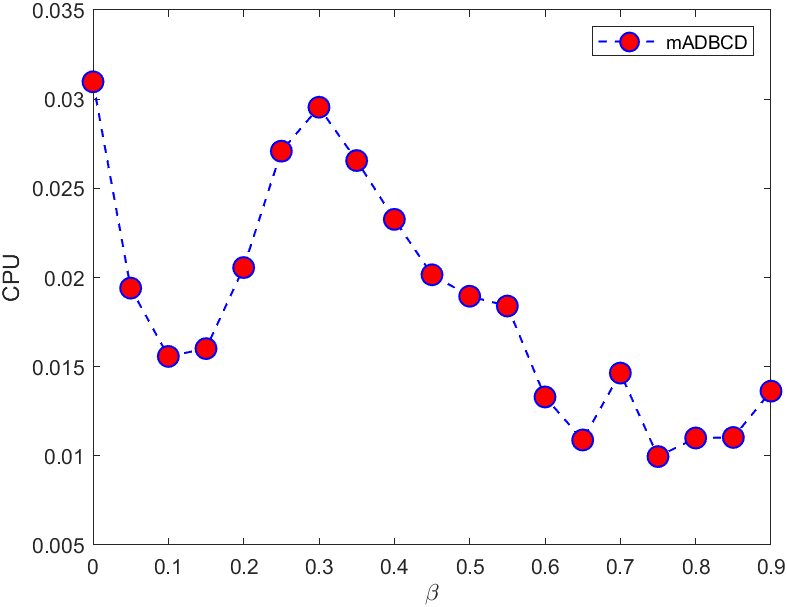}
  \end{minipage}
  \vspace{0.0cm}
  \caption{Pictures of $\beta$ versus IT (left) and CPU (right) for mADBCD when $\mathbf{A}$ is WorldCities.}
  \label{fig:12}
\end{figure}
\begin{figure}[!htbp]
  \renewcommand\figurename{Figure}
    \centering
  \begin{minipage}[t]{0.45\linewidth}
    \centering
    \includegraphics[scale=0.40]{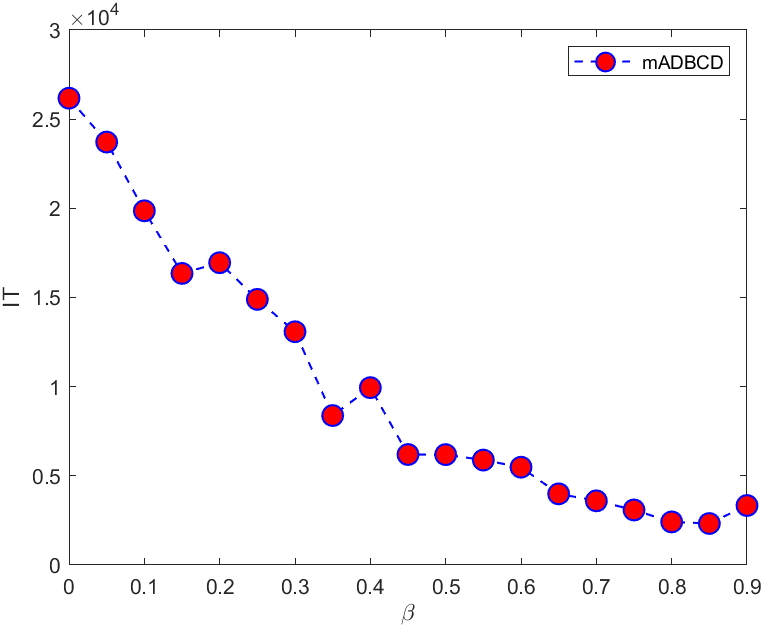}
  \end{minipage}
  \begin{minipage}[t]{0.45\linewidth}
    \centering
    \includegraphics[scale=0.40]{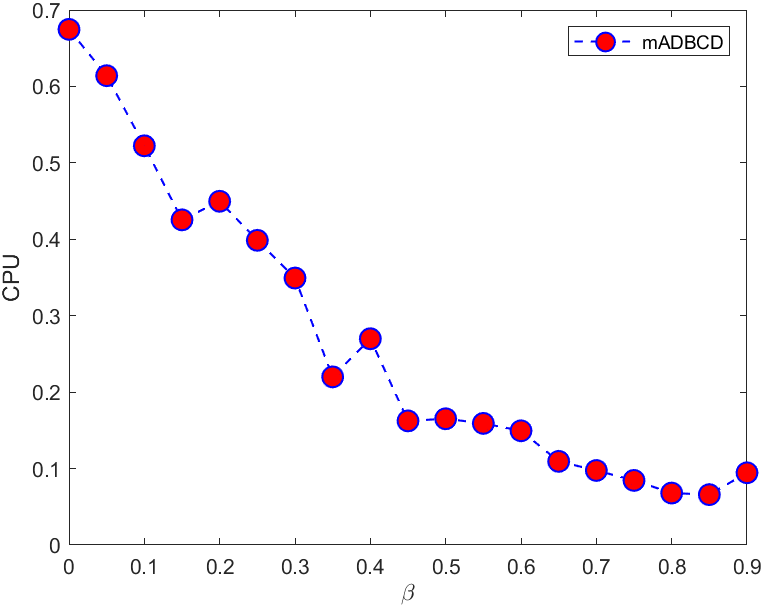}
  \end{minipage}
  \vspace{0.0cm}
  \caption{Pictures of $\beta$ versus IT (left) and CPU (right) for mADBCD when $\mathbf{A}$ is well1850.}
  \label{fig:13}
\end{figure}
\begin{figure}[!htbp]
  \renewcommand\figurename{Figure}
    \centering
  \begin{minipage}[t]{0.45\linewidth}
    \centering
    \includegraphics[scale=0.40]{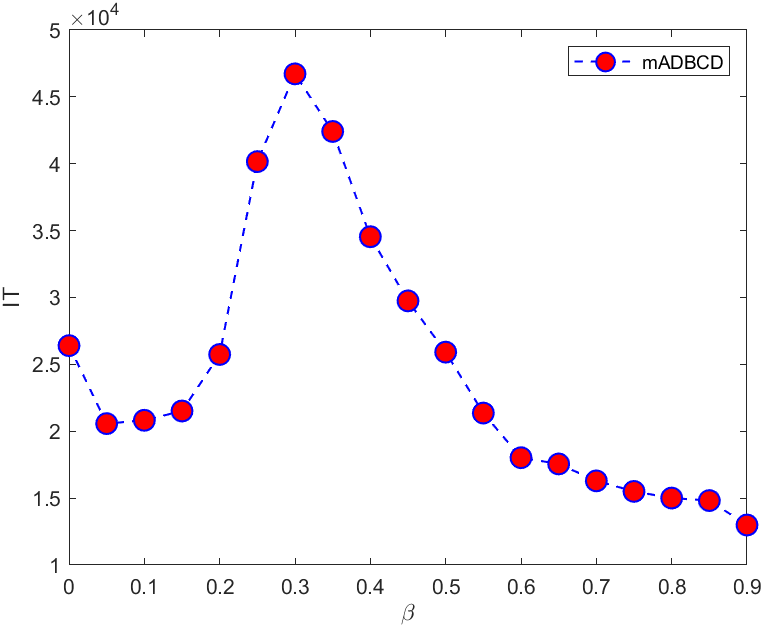}
  \end{minipage}
  \begin{minipage}[t]{0.45\linewidth}
    \centering
    \includegraphics[scale=0.40]{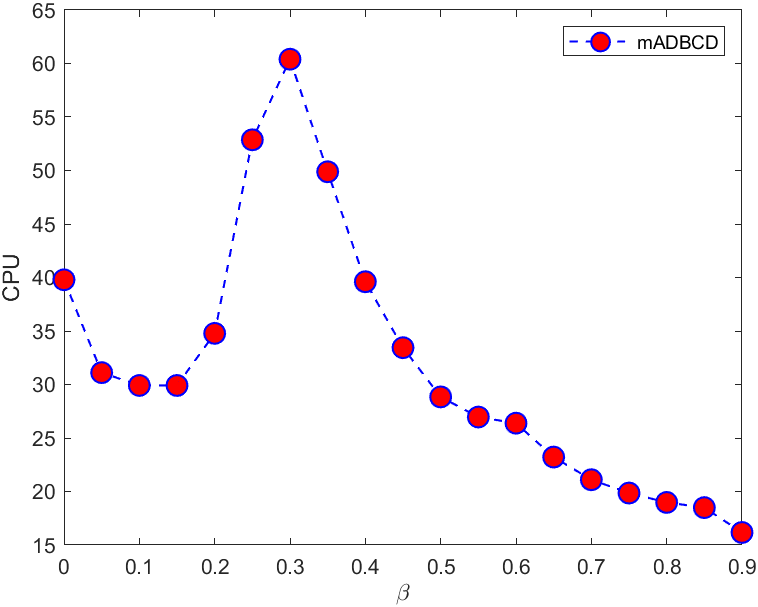}
  \end{minipage}
  \vspace{0.0cm}
  \caption{Pictures of $\beta$ versus IT (left) and CPU (right) for mADBCD when $\mathbf{A}$ is rail516$^T$.}
  \label{fig:14}
\end{figure}
\begin{figure}[!htbp]
  \renewcommand\figurename{Figure}
    \centering
  \begin{minipage}[t]{0.45\linewidth}
    \centering
    \includegraphics[scale=0.40]{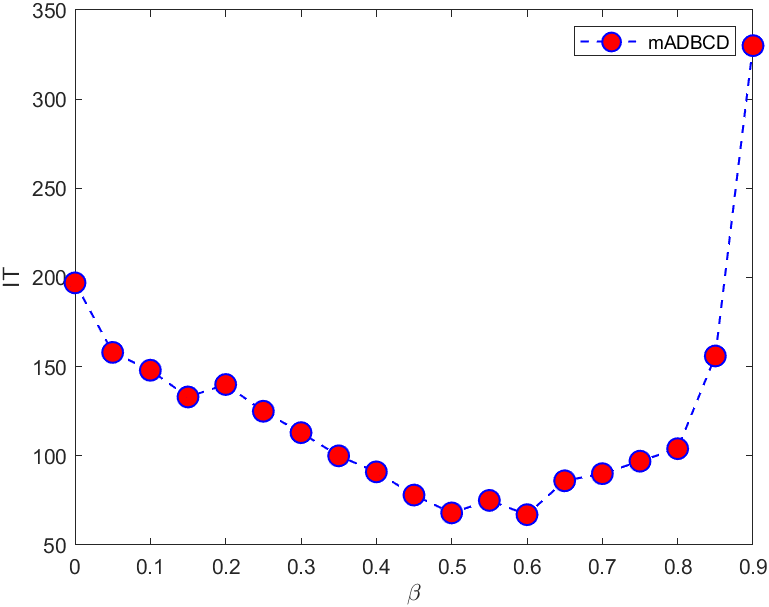}
  \end{minipage}
  \begin{minipage}[t]{0.45\linewidth}
    \centering
    \includegraphics[scale=0.40]{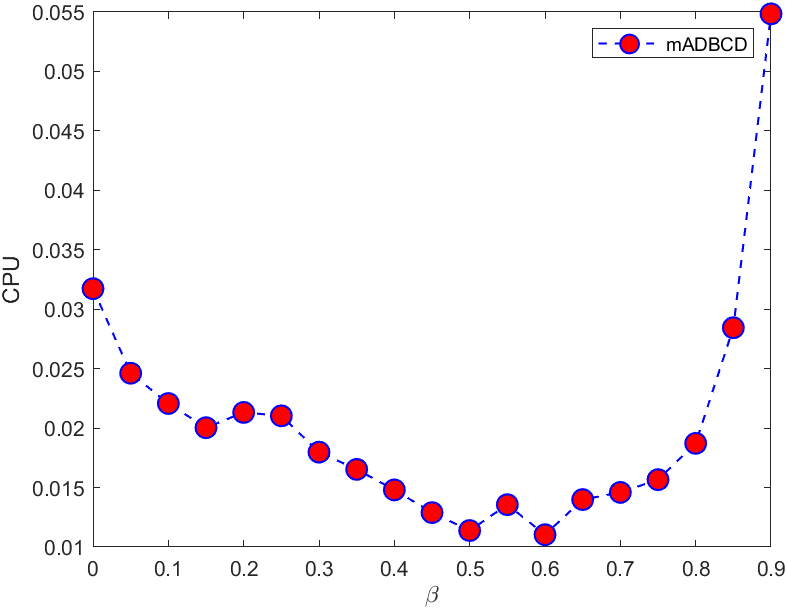}
  \end{minipage}
  \vspace{0.0cm}
  \caption{Pictures of $\beta$ versus IT (left) and CPU (right) for mADBCD when $\mathbf{A}$ is cage9.}
  \label{fig:15}
\end{figure}
\begin{table}[!htbp]
  \centering
  \caption{Numerical results of GBGS, MRBGS, FBCD and mADBCD for Florida sparse matrix collection.}\label{tab:6}%
  \begin{tabular*}{\hsize}{@{}@{\extracolsep{\fill}}lllllll@{}}
  \toprule
  name & & ash958 & abtaha1 & abtaha2 &  ash608 & WorldCities \\
  \midrule
  GBGS & IT & $53$ & $771$ & $1277$ & $47$ & $ 1448$ \\
  & CPU & $0.0078$ & $ 1.6182$ & $5.8035$ & $0.0049$ & $0.4238$ \\
  MRBGS& IT & $24$ & $345$ & $498$ & $23$ & $1338$ \\
  & CPU & $ 0.0060$ & $ 1.3409$ & $5.6426$ & $0.0029$ & $0.4141$ \\
  FBCD & IT & $69$ & $450$ & $1245$ & $63$ & $ 2078$ \\
  & CPU & $0.0022$ & $ 0.0699$ & $0.9450$ & $ 0.0013$ & $ 0.0536$ \\
  mADBCD & $\beta$ & $0.30$ & $0.35$ & $0.65$ & $0.30$ & $0.75$ \\
  & IT & $17$ & $116$ & $149$ & $19$ & $558$ \\
  & CPU & $0.0005$ & $0.0196$ & $0.0540$ & $ 0.0005$ & $0.0127$ \\
  speed-up{\_GBGS}& & $15.60$ & $82.56$ & $107.47$ & $9.80$ & $33.37$  \\
  speed-up{\_MRBGS}& & $12.00$ & $68.41$ & $104.49$ & $5.80$ & $32.61$  \\
  speed-up{\_FBCD}& & $ 4.40$ & $3.57$ & $17.50$ & $ 2.60$ & $4.22$  \\
  \bottomrule
  \end{tabular*}  
\end{table}

\begin{table}[!htbp]
  \centering
  \caption{Numerical results of GBGS, MRBGS, FBCD and mADBCD for Florida sparse matrix collection.}\label{tab:7}%
  \begin{tabular*}{\hsize}{@{}@{\extracolsep{\fill}}lllllll@{}}
  \toprule
  name & & well1033 & well1850 & rail516$^T$ &  rail582$^T$ &r05$^T$ \\
  \midrule
  GBGS & IT & $73057$ & $113952$ & $62672$ & $101317$ & $ 19922$ \\
  & CPU & $8.2718$ & $32.5437$ & $434.3091$ & $716.4277$ & $39.4501$ \\
  MRBGS& IT & $50625$ & $22714$ & $27246$ & $96403$ & $ 4921$ \\
  & CPU & $5.9303$ & $12.2070$ & $362.0436$ & $1164.4000 $ & $17.6058$ \\
  FBCD & IT & $103731$ & $142306$ & $94850$ & $165856$ & $ 52648$ \\
  & CPU & $ 1.7977$ & $4.2263$ & $124.6174$ & $219.4461$ & $ 21.1645$ \\
  mADBCD & $\beta$ & $0.90$ & $0.85$ & $0.90$ & $0.90$ & $0.90$ \\
  & IT & $ 6927$ & $2334$ & $12990$ & $19866$ & $2387$ \\
  & CPU & $0.1039$ & $ 0.0624$ & $13.5245$ & $24.7864$ & $0.7799$ \\
  speed-up{\_GBGS}& & $79.61$ & $521.53$ & $32.11$ & $28.90$ & $50.58$  \\
  speed-up{\_MRBGS}& & $57.08$ & $195.63$ & $26.77$ & $46.98$ & $22.57$  \\
  speed-up{\_FBCD}& & $17.30$ & $67.73$ & $9.21$ & $8.85$ & $ 27.14$  \\
  \bottomrule
  \end{tabular*}  
\end{table}

\begin{table}[!htbp]
  \centering
  \caption{Numerical results of GBGS, MRBGS, FBCD and mADBCD for Florida sparse matrix collection.}\label{tab:8}%
  \begin{tabular*}{\hsize}{@{}@{\extracolsep{\fill}}lllllll@{}}
  \toprule
  name & & cage8 & cage9 & cage10 & nemsafm$^T$ &lp22$^T$ \\
  \midrule
  GBGS & IT & $109$ & $227$ & $145$ & $73$ & $ 19922$ \\
  & CPU & $ 0.0445$ & $ 0.3244$ & $1.7932$ & $0.0191$ & $39.4501$ \\
  MRBGS& IT & $61$ & $97$ & $53$ & $41$ & $ 4921$ \\
  & CPU & $0.0496$ & $  0.2945$ & $ 1.7576$ & $ 0.0086$ & $17.6058$ \\
  FBCD & IT & $280$ & $425$ & $250$ & $132$ & $ 52648$ \\
  & CPU & $ 0.0101$ & $ 0.0550$ & $ 0.1599$ & $0.0026$ & $ 21.1645$ \\
  mADBCD & $\beta$ & $0.55$ & $0.60$ & $0.45$ & $0.45$ & $0.05$ \\
  & IT & $83$ & $67$ & $52$ & $31$ & $6028$ \\
  & CPU & $ 0.0026$ & $0.0088$ & $0.0238$ & $ 0.0005$ & $1.2944$ \\
  speed-up{\_GBGS}& & $17.11$ & $36.86$ & $75.34$ & $ 38.20$ & $47.39$  \\
  speed-up{\_MRBGS}& & $19.08$ & $33.47$ & $73.85$ & $17.20$ & $45.54$  \\
  speed-up{\_FBCD}& & $ 3.88$ & $6.25$ & $6.72$ & $5.20$ & $ 11.21$  \\
  \bottomrule
  \end{tabular*}  
\end{table}

\begin{table}[!htbp]
  \centering
  \caption{Numerical results of GBGS, MRBGS, FBCD and mADBCD for Florida sparse matrix collection.}\label{tab:9}%
  \begin{tabular*}{\hsize}{@{}@{\extracolsep{\fill}}lllllll@{}}
  \toprule
  name & & model1$^T$ & model8$^T$ & nemscem$^T$ & p05$^T$ &pgp2$^T$ \\
  \midrule
  GBGS & IT & $485$ & $49551$ & $1090$ & $ 16581$ & $  1894$ \\
  & CPU & $0.0701$ & $41.7820$ & $0.2240$ & $ 34.9072$ & $36.2071$ \\
  MRBGS& IT & $177$ & $8214$ & $519$ & $4685$ & $989$ \\
  & CPU & $0.0417$ & $12.7197$ & $0.1398$ & $21.3427$ & $ 29.5970$ \\
  FBCD & IT & $776$ & $60880$ & $2705$ & $48300$ & $ 2512$ \\
  & CPU & $ 0.0195$ & $5.9184$ & $0.0513$ & $ 18.4968$ & $ 0.5408$ \\
  mADBCD & $\beta$ & $0.80$ & $0.90$ & $0.80$ & $0.90$ & $0.80$ \\
  & IT & $118$ & $832$ & $620$ & $1354$ & $388$ \\
  & CPU & $0.0013$ & $0.0723$ & $0.0089$ & $0.3384$ & $0.0542$ \\
  speed-up{\_GBGS}& & $53.92$ & $577.90$ & $25.17$ & $103.15$ & $668.03  $  \\
  speed-up{\_MRBGS}& & $32.08$ & $175.93$ & $15.71$ & $63.07$ & $546.07$  \\
  speed-up{\_FBCD}& & $15.00$ & $81.86$ & $5.76$ & $54.66$ & $9.98$  \\
  \bottomrule
  \end{tabular*}  
\end{table}
\begin{figure}[!htbp]
  \renewcommand\figurename{Figure}
    \centering
  \begin{minipage}[t]{0.45\linewidth}
    \centering
    \includegraphics[scale=0.40]{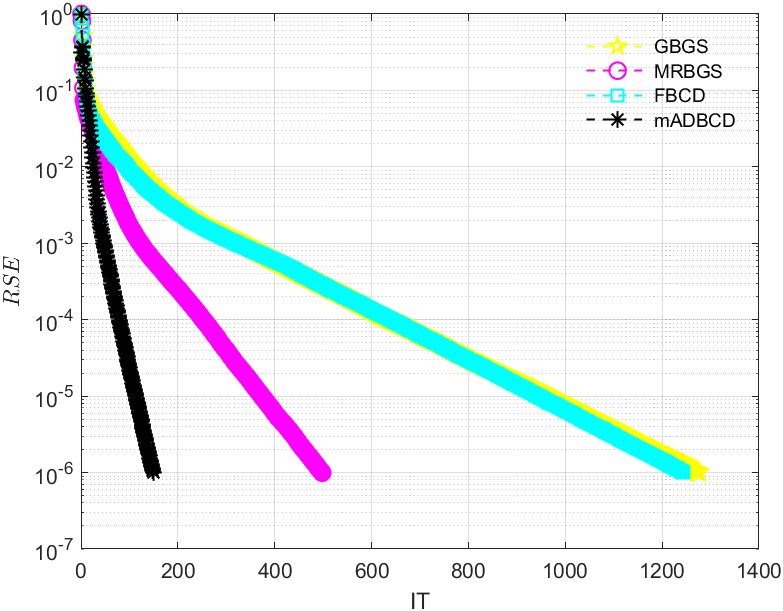}
  \end{minipage}
  \begin{minipage}[t]{0.45\linewidth}
    \centering
    \includegraphics[scale=0.40]{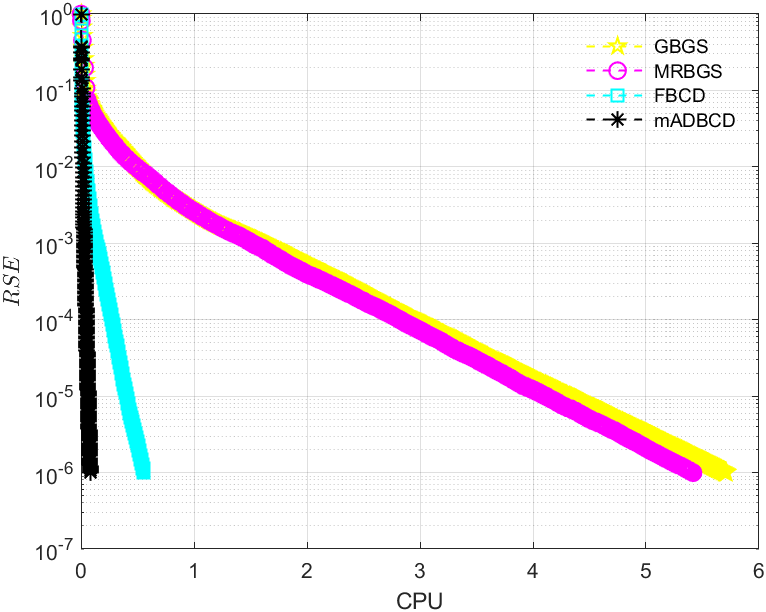}
  \end{minipage}
  \vspace{0.0cm}
  \caption{RSE versus IT (left) and CPU (right) of GBGS, MRBGS, FBCD and mADBCD for coefficient matrix abtaha2.}
  \label{fig:16}
\end{figure}
\begin{figure}[!htbp]
  \renewcommand\figurename{Figure}
    \centering
  \begin{minipage}[t]{0.45\linewidth}
    \centering
    \includegraphics[scale=0.40]{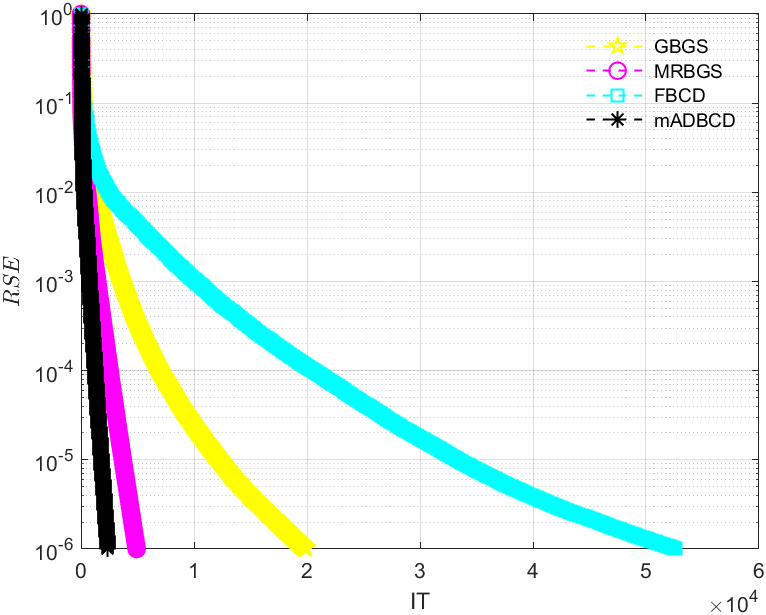}
  \end{minipage}
  \begin{minipage}[t]{0.45\linewidth}
    \centering
    \includegraphics[scale=0.40]{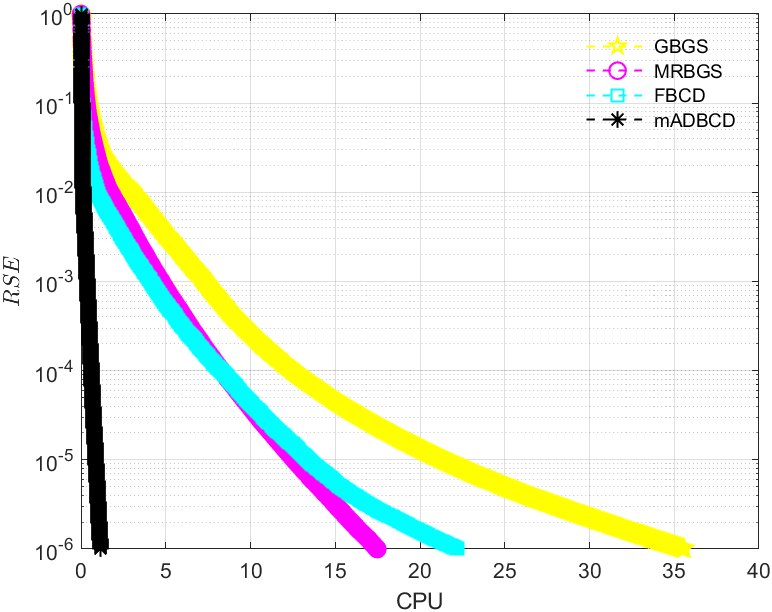}
  \end{minipage}
  \vspace{0.0cm}
  \caption{RSE versus IT (left) and CPU (right) of GBGS, MRBGS, FBCD and mADBCD for coefficient matrix r05$^T$.}
  \label{fig:17}
\end{figure}
\begin{figure}[!htbp]
  \renewcommand\figurename{Figure}
    \centering
  \begin{minipage}[t]{0.45\linewidth}
    \centering
    \includegraphics[scale=0.40]{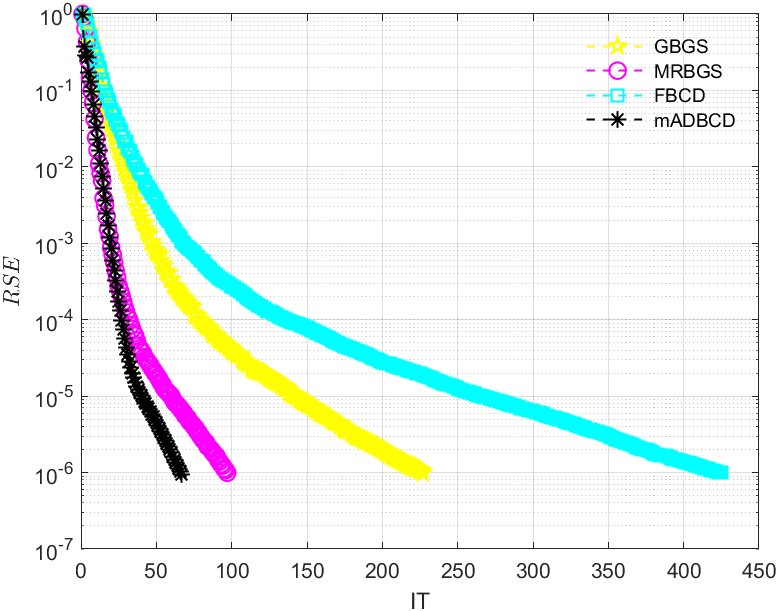}
  \end{minipage}
  \begin{minipage}[t]{0.45\linewidth}
    \centering
    \includegraphics[scale=0.40]{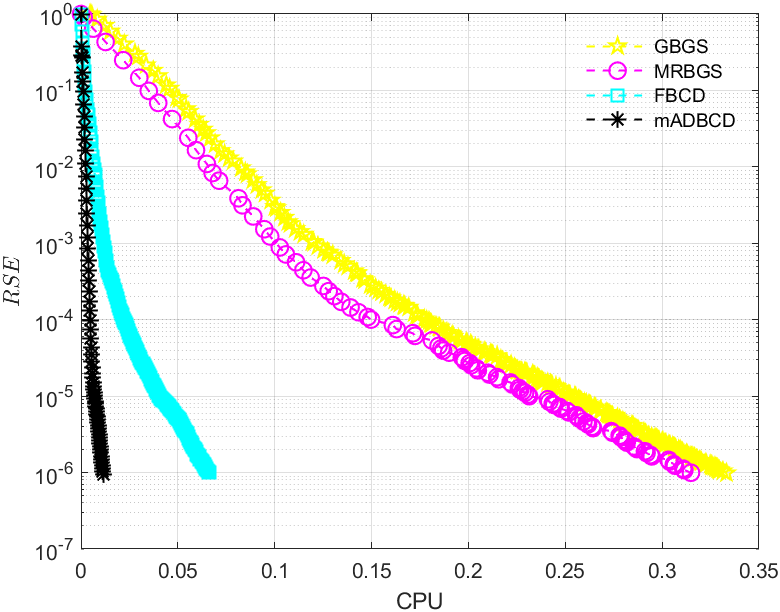}
  \end{minipage}
  \vspace{0.0cm}
  \caption{RSE versus IT (left) and CPU (right) of GBGS, MRBGS, FBCD and mADBCD for coefficient matrix cage9.}
  \label{fig:18}
\end{figure}
\begin{figure}[!htbp]
  \renewcommand\figurename{Figure}
    \centering
  \begin{minipage}[t]{0.45\linewidth}
    \centering
    \includegraphics[scale=0.40]{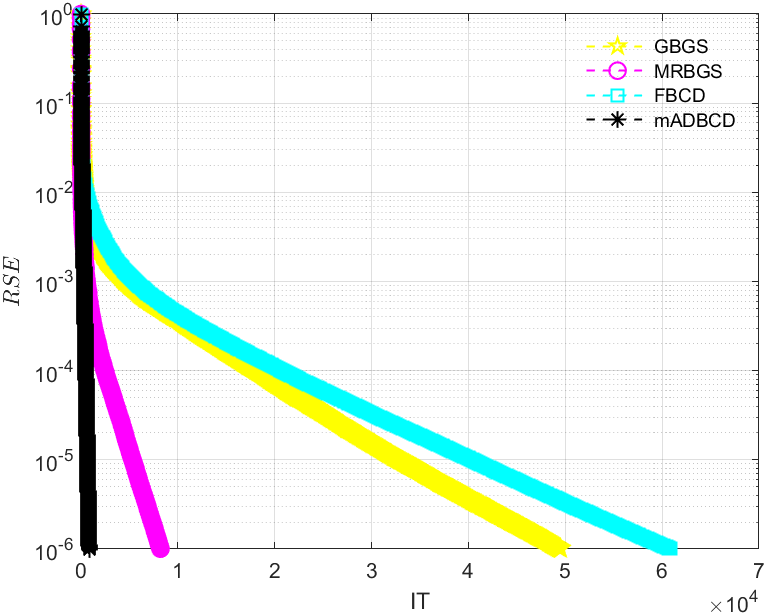}
  \end{minipage}
  \begin{minipage}[t]{0.45\linewidth}
    \centering
    \includegraphics[scale=0.40]{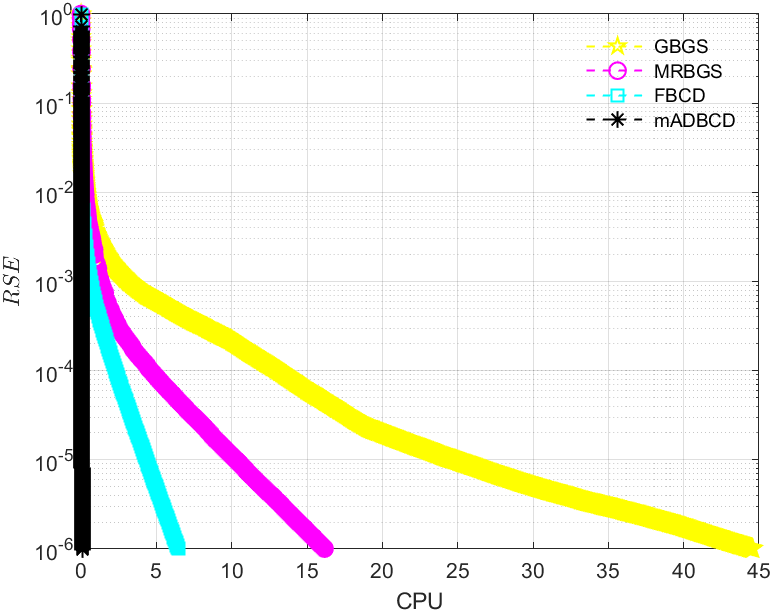}
  \end{minipage}
  \vspace{0.0cm}
  \caption{RSE versus IT (left) and CPU (right) of GBGS, MRBGS, FBCD and mADBCD for coefficient matrix model8$^T$.}
  \label{fig:19}
\end{figure}
\begin{figure}[!htbp]
  \renewcommand\figurename{Figure}
    \centering
  \begin{minipage}[t]{0.30\linewidth}
    \centering
    \includegraphics[scale=0.55]{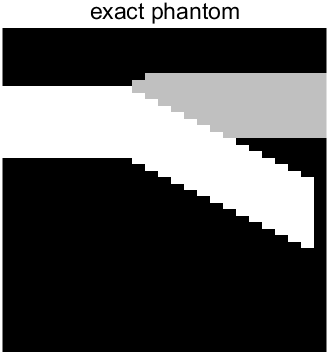}
  \end{minipage}
  \vspace{5mm}
  \begin{minipage}[t]{0.30\linewidth}
    \centering
    \includegraphics[scale=0.55]{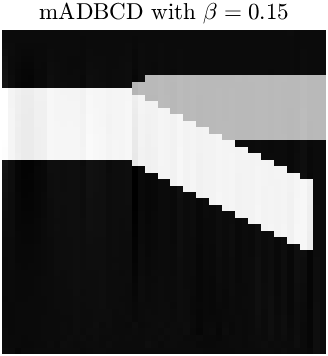}
  \end{minipage}
  \begin{minipage}[t]{0.30\linewidth}
    \centering
    \includegraphics[scale=0.55]{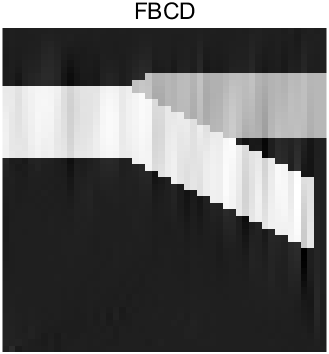}
  \end{minipage}
  \begin{minipage}[t]{0.30\linewidth}
    \centering
    \includegraphics[scale=0.55]{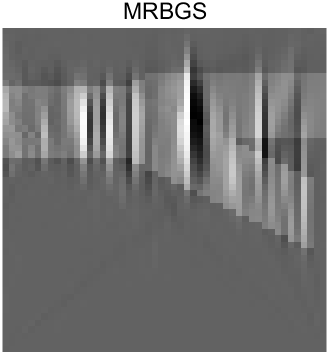}
  \end{minipage}
  \begin{minipage}[t]{0.30\linewidth}
    \centering
    \includegraphics[scale=0.55]{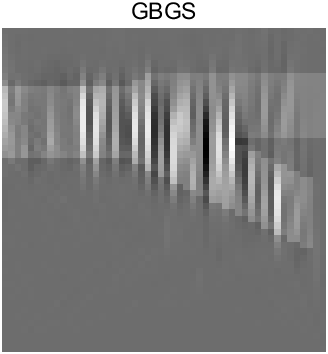}
  \end{minipage}
  \begin{minipage}[t]{0.30\linewidth}
    \includegraphics[scale=0.45]{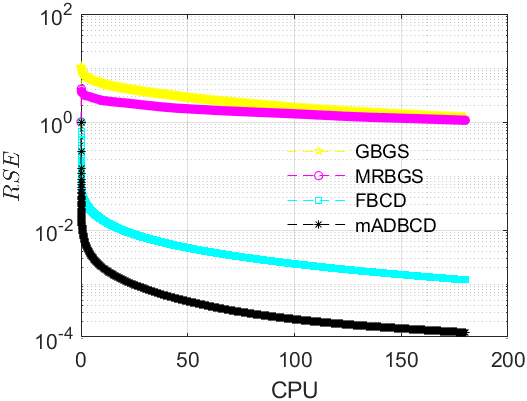}
  \end{minipage}
  \vspace{0.0cm}
  \caption{Performance of GBGS, MRBGS, FBCD and mADBCD methods for \textbf{seismictomo}$(n=50, s=80, p=120)$ test problem.}
  \label{fig:20}
\end{figure}
\begin{figure}[!htbp]
  \renewcommand\figurename{Figure}
    \centering
  \begin{minipage}[t]{0.30\linewidth}
    \centering
    \includegraphics[scale=0.55]{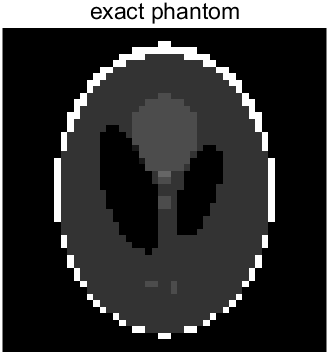}
  \end{minipage}
  \vspace{5mm}
  \begin{minipage}[t]{0.30\linewidth}
    \centering
    \includegraphics[scale=0.55]{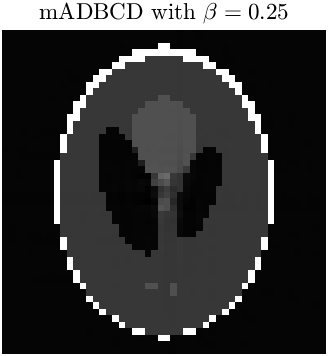}
  \end{minipage}
  \begin{minipage}[t]{0.30\linewidth}
    \centering
    \includegraphics[scale=0.55]{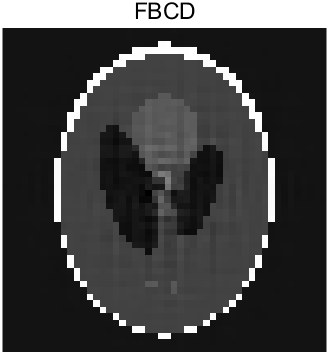}
  \end{minipage}
  \begin{minipage}[t]{0.30\linewidth}
    \centering
    \includegraphics[scale=0.55]{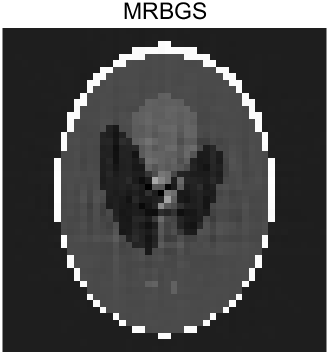}
  \end{minipage}
  \begin{minipage}[t]{0.30\linewidth}
    \centering
    \includegraphics[scale=0.55]{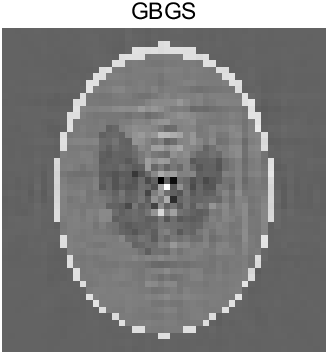}
  \end{minipage}
  \begin{minipage}[t]{0.30\linewidth}
    \includegraphics[scale=0.45]{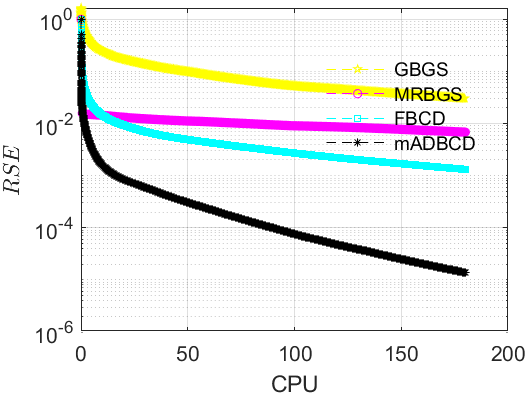}
  \end{minipage}
  \vspace{0.0cm}
  \caption{Performance of GBGS, MRBGS, FBCD and mADBCD methods for \textbf{fanlineartomo}$(N=50, \theta=0: 1: 300^{\circ}, P=50)$ test problem.}
  \label{fig:21}
\end{figure}
\subsection{Comparison of the CS-mADBCD method with the mADBCD method}\label{sec:5.2}
In the subsection, we compare the numerical performance of the mADBCD method with the CS-mADBCD method to solve extremely overdetermined linear least-squares problems from Example \ref{example 5.1} and Example \ref{example 5.5}. We also report the speed-up of the CS-mADBCD method against the mADBCD method, which is defined as
$$
\text { speed-up }=\frac{\mathrm{CPU} \text { of mADBCD }}{\text { CPU of CS-mADBCD }}.
$$
To reflect the fairness of the comparison, the values of the momentum parameters $\beta$ and $\beta_{CS}$ in Tables \ref{tab:10}-\ref{tab:13} are taken such that the mADBCD and CS-mADBCD methods each require the fewest number of iterations when they satisfy the termination criterion \eqref{termination}. In Tables \ref{tab:10}-\ref{tab:13}, CPU$_1$ refers to the time taken for generating the count sketch matrix $\mathbf{S}$ and computing  $\tilde{\mathbf{A}}=\mathbf{S}\mathbf{A}$ and $\tilde{\mathbf{b}}=\mathbf{S}\mathbf{b}$, which represents the time consumed for preprocessing the original least-squares problem \eqref{linear least} into
\begin{equation}\label{equ:5.2}
  \underset{\mathbf{x}\in\mathbb{R}^n}{\min}\|\tilde{\mathbf{b}}-\tilde{\mathbf{A}}\mathbf{x}\|_2.
\end{equation}
On the other hand, CPU$_2$ signifies the time taken to apply the mADBCD method to solve problem \eqref{equ:5.2} and satisfy the termination condition \eqref{termination} and CPU=CPU$_1$ + CPU$_2$. Although the convergence condition in Theorem \ref{theo:4.1} requires $d=O(n^2)$, in numerical experiments, to reduce the time needed for forming the least-squares problem \eqref{equ:5.2} and enhance the practicality of the CS-mADBCD method, we explore the convergence rate and numerical performance of the CS-mADBCD method when $d=O(n)$.
\par
Analyzing the data in Tables \ref{tab:10}-\ref{tab:13}, we can draw the following conclusions. Firstly, regardless of whether the coefficient matrix of problem \eqref{linear least} is dense or sparse, the mADBCD method requires fewer iteration steps than the CS-mADBCD method. However, the former demands more CPU time than the latter. In comparison to the number of iteration steps, we are more concerned with CPU time because it represents the efficiency of a method. Therefore, the numerical performance of the CS-mADBCD method is superior to that of the mADBCD method. Secondly, we observe that as $d$ increases, the iteration steps of the CS-mADBCD method gradually decrease, while the values of CPU$_1$ and CPU$_2$ gradually increase, consequently causing an increase in the total time CPU for this method. Finally, the speed-ups is between 1.65 and 5.53 for dense matrices (refer to Tables \ref{tab:10}-\ref{tab:11}), and between 2.09 and 5.12 for sparse matrices (refer to Tables \ref{tab:12}-\ref{tab:13}).
  \begin{table}[!htbp]
    \centering
    \caption{IT and CPU of mADBCD and CS-mADBCD for different $\mathbf{A}$=\textbf{randn}$(400000,n)$.}\label{tab:10}%
    \begin{tabular*}{\hsize}{@{}@{\extracolsep{\fill}}|c|c|c|c|c|c|c|c|c|c|@{}}
      \hline 
       & \multirow[b]{2}{*}{(d, $\beta$, $\beta_{CS}$)} & \multicolumn{2}{l|}{ mADBCD  } & \multicolumn{5}{l|}{CS-mADBCD} \\      \hline 
     $n$ & & IT & CPU & IT & CPU$_1$ & CPU$_2$  & CPU & speed-up \\      \hline 
      \multirow[t]{4}{*}{$ 500$} & $(2n, 0, 0.55)$& $8$ & $3.0138
      $ & $38$ & $ 0.6442$ & $0.0048$ & $0.6490$ & $4.64$\\
      & $(4n, 0, 0.30)$ & $8$ & $3.7594$ & $18$ & $ 0.6733$ & $ 0.0062$ & $0.6794$ & $5.53$\\
      & $(8n, 0, 0.20)$ & $8$ & $3.0301$ & $13$ & $ 0.7272$ & $0.0162$ & $0.7434$ & $4.08$\\
      & $(12n, 0, 0.15)$ & $8$ & $3.0087$ & $12$ & $0.7371$ & $ 0.0238$ & $ 0.7610$ & $4.08$\\
      & $(20n, 0, 0.15)$ & $8$ & $ 3.2559 $ & $11$ & $ 0.8441$ & $0.0438$ & $0.8878$ & $3.67$\\
      \hline \multirow[t]{4}{*}{$1000$}& $(2n,0, 0.55)$& $8$ & $ 4.0817$ & $37$ & $ 1.6073$ & $ 0.0555$ & $ 1.6628$ & $2.45$\\
      & $(4n,0, 0.30)$ & $8$ & $4.1216$ & $18$ & $1.6341$ & $ 0.0623$ & $1.6964$ & $2.43$\\
      & $(8n, 0, 0.15)$ & $8$ & $ 4.5284$ & $13$ & $1.6623$ & $ 0.0869$ & $ 1.7491$ & $2.59$\\
      & $(12n, 0, 0.15)$ & $8$ & $4.0582$ & $12$ & $ 1.7006$ & $0.1151$ & $1.8157$ & $2.24$\\
      & $(20n, 0, 0.15)$ & $8$ & $ 4.1066$ & $11$ & $1.7504$ & $ 0.1700$ & $1.9203$ & $2.14$\\
      \hline \multirow[t]{4}{*}{$2000$}& $(2n, 0, 0.60)$& $9$ & $10.3952$ & $38$ & $3.5714$ & $0.2708$ & $3.8421$ & $2.71$\\
      & $(4n,0, 0.35)$ & $9$ & $ 11.3374$ & $19$ & $3.7427$ & $ 0.2769$ & $4.0196$ & $2.82$\\
      & $(8n, 0, 0.15)$ & $9$ & $ 10.5053$ & $13$ & $4.4646$ & $0.4071$ & $4.8717$ & $2.16$\\
      & $(12n, 0, 0.15)$ & $9$ & $11.0637$ & $12$ & $5.1079$ & $ 0.5862$ & $ 5.6941$ & $1.94$\\
      & $(20n, 0, 0.15)$ & $9$ & $10.2883$ & $11$ & $ 5.4775$ & $0.7610$ & $6.2385$ & $1.65$\\      \hline
      \end{tabular*}    
  \end{table}
  
  \begin{table}[!htbp]
    \centering
    \caption{IT and CPU of mADBCD and CS-mADBCD for different $\mathbf{A}$=\textbf{randn}$(800000,n)$.}\label{tab:11}%
    \begin{tabular*}{\hsize}{@{}@{\extracolsep{\fill}}|c|c|c|c|c|c|c|c|c|c|@{}}
      \hline   & \multirow[b]{2}{*}{(d, $\beta$, $\beta_{CS}$)} & \multicolumn{2}{l|}{ mADBCD  } & \multicolumn{5}{l|}{ CS-mADBCD} \\
      \hline $n$ & & IT & CPU & IT & CPU$_1$ & CPU$_2$  & CPU & speed-up \\
      \hline \multirow[t]{4}{*}{$200$} & $(2n, 0, 0.45)$& $8$ & $ 1.2379$ & $34$ & $0.5334$ & $0.0009$ & $0.5343$ & $2.32$\\
      & $(4n, 0, 0.30)$ & $7$ & $1.1168$ & $17$ & $0.5358$ & $0.0012$ & $ 0.5370$ & $2.08$\\
      & $(8n, 0, 0.20)$ & $7$ & $ 1.2029$ & $13$ & $ 0.5431$ & $ 0.0014$ & $ 0.5445$ & $2.21$\\
      & $(12n, 0, 0.15)$ & $7$ & $1.2352$ & $12$ & $0.5529$ & $0.0017$ & $ 0.5546$ & $2.23$\\
      & $(20n, 0, 0.15)$ & $8$ & $1.2046$ & $11$ & $  0.5642$ & $0.0022$ & $ 0.5665$ & $2.13$\\
      \hline \multirow[t]{4}{*}{$400$}& $(2n,0, 0.55)$& $8$ & $2.6605$ & $34$ & $ 1.1782$ & $ 0.0030$ & $1.1812$ & $2.25$\\
      & $(4n, 0, 0.30)$ & $8$ & $2.6878$ & $17$ & $ 1.1913$ & $ 0.0031$ & $1.1944$ & $2.25$\\
      & $(8n, 0, 0.20)$ & $8$ & $2.8013$ & $13$ & $1.2527$ & $0.0126$ & $1.2653$ & $2.21$\\
      & $(12n, 0, 0.20)$ & $8$ & $2.6497$ & $12$ & $1.2981$ & $ 0.0181 $ & $ 1.3163$ & $2.01$\\
      & $(20n, 0, 0.10)$ & $8$ & $2.7764$ & $11$ & $1.3347$ & $0.0288$ & $ 1.3635$ & $2.04$\\
      \hline \multirow[t]{4}{*}{$800$}& $(2n, 0, 0.60)$& $8$ & $ 6.1139$ & $37$ & $2.8606$ & $0.0326$ & $ 2.8932$ & $2.11$\\
      & $(4n, 0, 0.30)$ & $8$ & $6.2892$ & $18$ & $ 2.9666$ & $0.0400$ & $3.0066$ & $2.09$\\
      & $(8n, 0, 0.15)$ & $8$ & $5.5875$ & $13$ & $2.9733$ & $0.0649$ & $3.0382$ & $1.84$\\
      & $(12n, 0, 0.15)$ & $8$ & $6.3841$ & $12$ & $2.9739$ & $0.0860$ & $3.0598$ & $2.09$\\
      & $(20n, 0, 0.15)$ & $8$ & $6.1772$ & $11$ & $3.0938$ & $0.1312$ & $3.2250$ & $1.92$\\
      \hline
      \end{tabular*}    
  \end{table}

  \begin{table}[!htbp]
    \centering
    \caption{IT and CPU of mADBCD and CS-mADBCD for different $\mathbf{A}=$\textbf{sprandn}$(250000, n, 0.15)$.}\label{tab:12}%
    \begin{tabular*}{\hsize}{@{}@{\extracolsep{\fill}}|c|c|c|c|c|c|c|c|c|c|@{}}
      \hline  & \multirow[b]{2}{*}{(d, $\beta$, $\beta_{CS}$)} & \multicolumn{2}{l|}{ mADBCD  } & \multicolumn{5}{l|}{ CS-mADBCD} \\
      \hline $n$ & & IT & CPU & IT & CPU$_1$ & CPU$_2$  & CPU & speed-up \\
      \hline \multirow[t]{4}{*}{$250$} & $(2n, 0, 0.55)$& $8$ & $2.1190$ & $35$ & $0.8693$ & $0.0012$ & $0.8705$ & $2.43$\\
      & $(4n, 0, 0.30)$& $8$ & $2.2894$ & $18$ & $0.8842$ & $ 0.0011$ & $ 0.8853$ & $2.59$\\
      & $(8n, 0, 0.15)$& $8$ & $2.1884$ & $13$ & $0.8888$ & $0.0015$ & $  0.8903$ & $2.46$\\
      & $(12n, 0, 0.15)$& $8$ & $2.1781$ & $12$ & $ 0.9097$ & $ 0.0020$ & $0.9117$ & $2.39$\\
      & $(20n, 0, 0.05)$& $8$ & $ 2.1975$ & $10$ & $ 0.9150$ & $0.0045$ & $ 0.9195$ & $2.39$\\
      \hline \multirow[t]{4}{*}{$500$}& $(2n,0, 0.50)$& $8$ & $5.0271$ & $38$ & $1.3963$ & $0.0036$ & $1.3999$ & $3.59$\\
      & $(4n,0, 0.30)$& $8$ & $ 5.2635$ & $18$ & $ 1.3725$ & $0.0059$ & $ 1.3784$ & $3.82$\\
      & $(8n, 0, 0.20)$& $8$ & $5.8286$ & $13$ & $ 1.4704$ & $ 0.0169$ & $1.4873$ & $3.92$\\
      & $(12n, 0, 0.15)$& $8$ & $5.3982$ & $12$ & $1.5076$ & $0.0296$ & $1.5372$ & $3.51$\\
      & $(20n, 0, 0.15)$& $8$ & $5.1187$ & $11$ & $1.5642$ & $0.0404$ & $1.6046$ & $3.19$\\
      \hline \multirow[t]{4}{*}{$1000$}& $(2n,0, 0.55)$& $9$ & $ 13.0300$ & $36$ & $ 2.5013$ & $ 0.0425$ & $ 2.5438$ & $5.12$\\
      & $(4n,0, 0.35)$& $9$ & $14.0215$ & $19$ & $ 2.8323$ & $0.0685$ & $ 2.9008$ & $4.83$\\
      & $(8n, 0, 0.15)$& $9$ & $ 12.7943$ & $13$ & $ 2.8739$ & $0.0827$ & $2.9565$ & $4.33$\\
      & $(12n, 0, 0.15)$& $8$ & $11.4808$ & $12$ & $ 2.9394$ & $ 0.1272$ & $3.0666$ & $3.74$\\
      & $(20n, 0, 0.15)$& $8$ & $ 11.3291 $ & $11$ & $2.9854$ & $  0.2131$ & $ 3.0385
      $ & $3.73$\\
      \hline
      \end{tabular*}    
  \end{table}

  \begin{table}[!htbp]
    \centering
    \caption{IT and CPU of mADBCD and CS-mADBCD for different $\mathbf{A}=$\textbf{sprandn}$(500000, n, 0.075)$.}\label{tab:13}%
    \begin{tabular*}{\hsize}{@{}@{\extracolsep{\fill}}|c|c|c|c|c|c|c|c|c|c|@{}}
      \hline & \multirow[b]{2}{*}{(d, $\beta$, $\beta_{CS}$)} & \multicolumn{2}{l|}{ mADBCD  } & \multicolumn{5}{l|}{ CS-mADBCD} \\
      \hline $n$ & & IT & CPU & IT & CPU$_1$ & CPU$_2$  & CPU & speed-up \\
      \hline \multirow[t]{4}{*}{$500$} & $(2n, 0, 0.55)$& $8$ & $4.7912
      $ & $37$ & $ 2.1666$ & $0.0044$ & $2.1710$ & $2.21$\\
      & $(4n, 0, 0.30)$& $8$ & $4.9453$ & $18$ & $2.1995$ & $ 0.0055$ & $2.2050$ & $2.24$\\
      & $(8n, 0, 0.15)$& $8$ & $4.8489$ & $13$ & $2.2574$ & $0.0172$ & $2.2746$ & $2.13$\\
      & $(12n, 0, 0.15)$& $8$ & $ 5.3409$ & $12$ & $ 2.3166$ & $0.0253$ & $2.3419$ & $2.28$\\
      & $(20n, 0, 0.10)$& $8$ & $4.8892$ & $11$ & $2.2884$ & $0.0467$ & $2.3351$ & $2.09$\\
      \hline \multirow[t]{4}{*}{$1000$}& $(2n,0, 0.55)$& $8$ & $ 11.3653$ & $36$ & $3.6586$ & $ 0.0437$ & $3.7023$ & $3.07$\\
      & $(4n,0, 0.30)$& $8$ & $11.3695$ & $18$ & $ 3.6373 $ & $0.0543$ & $3.6915$ & $3.08$\\
      & $(8n, 0, 0.20)$& $8$ & $11.5792$ & $13$ & $3.7178 $ & $0.0791$ & $3.7969$ & $3.05$\\
      & $(12n, 0, 0.15)$& $8$ & $10.8494$ & $12$ & $3.8236$ & $0.1079 $ & $ 3.9315$ & $2.76$\\
      & $(20n, 0, 0.15)$& $8$ & $11.4601$ & $11$ & $4.1175 $ & $0.1933$ & $ 4.3108$ & $2.66$\\
      \hline \multirow[t]{4}{*}{$2000$}& $(2n,0, 0.55)$& $8$ & $27.4519$ & $37$ & $7.3132$ & $0.2843$ & $7.5976$ & $3.61$\\
      & $(4n,0, 0.30)$& $8$ & $32.7344$ & $18$ & $7.3642 $ & $0.2563$ & $7.6206$ & $4.30$\\
      & $(8n, 0, 0.15)$& $8$ & $ 29.7545$ & $13$ & $7.4427$ & $ 0.3277$ & $ 7.7704$ & $3.83$\\
      & $(12n, 0, 0.15)$& $8$ & $27.2784$ & $12$ & $7.5501 $ & $  0.5225$ & $ 8.0726$ & $3.38$\\
      & $(20n, 0, 0.15)$& $8$ & $27.4581$ & $11$ & $8.0427$ & $  0.8337$ & $8.8763$ & $3.09$\\
      \hline
      \end{tabular*}    
  \end{table}

\section{Conclusion}\label{sec:6}
In this paper, we propose an adaptive block coordinate descent method with momentum. In similar manner to the GBGS, MRBGS and FBCD methods \cite{ELiZ,ELiLuX,EChHu}, the mADBCD method uses all the elements in the indexed set per iteration to update the next iteration vector, which is different from the RCD, GRCD and RGRCD methods \cite{ELeLa,EBaWu,EZhG} that use only one element to get the next approximate solution. Under the condition of full rank columns in the coefficient matrix, theoretical analysis reveals that the method converges to the unique solution of the linear least-squares problem. It also indicates that the maximum convergence rate of the mADBCD method is closely related to the minimum singular value of the coefficient matrix, the maximum singular value of the selected submatrix, and the momentum parameter $\beta$. Secondly, we effectively integrate count sketch technology with the mADBCD method, designing a more efficient algorithm for solving extremely overdetermined least-squares problems, namely, the CS-mADBCD method, and demonstrate its convergence. In addition, the effect of the momentum parameter $\beta$ on the numerical performance of the mADBCD method for solving linear least-squares problems has also been investigated by numerical experiments. And we find that for larger values of $\frac{m}{n}$, the relatively suitable parameter $\beta$ of the method takes a smaller value, particularly, in Table \ref{tab:10}-\ref{tab:13}, it can be seen that for extremely overdetermined least-squares problems, the optimal values of $\beta$ for the mADBCD method are all 0. Finally, some numerical experiments demonstrate that the proposed method is more effective in solving problem \eqref{linear least} compared to the GBGS, MRBGS and FBCD methods.
\par
From Theorems \ref{theo:3.1} and \ref{theo:4.1}, it can be observed that the optimal momentum parameter values of $\beta$, used to minimize the convergence speed upper bounds for the mADBCD and CS-mADBCD methods, are closely related to the properties of the coefficient matrices and the selected column submatrices at each iteration. Therefore, it is currently a challenging task theoretically to determine the optimal values or range of momentum parameter $\beta$ for achieving the best convergence speed for the mADBCD and CS-mADBCD methods. Hence, designing an efficient block coordinate descent method with adaptively selected favorable momentum parameters remains to be explored and is one of our research directions in the future.

\bmhead{Funding}This work is partly supported by National Natural Science Foundation of China (No. 12071149), Science and Technology Commission of Shanghai Municipality(No. 22DZ2229014), National Key Research and Development Program(No. 2022YFA1004403).

\bmhead{Data Availability Statement}Data sharing not applicable to this article as no datasets were generated or analyzed during the current study.

\section*{Declarations}
\bmhead{Ethics approval} Not applicable.
\bmhead{Competing interests} The authors declare no competing interests.

\bibliography{sn-bibliography}

\end{document}